\newtheorem{thm}[subsection]{Theorem}
\newtheorem{lem}[subsection]{Lemma}
\newtheorem{prop}[subsection]{Proposition}
\newtheorem{cor}[subsection]{Corollary}
\theoremstyle{definition}
\theoremstyle{remark}
\newtheorem{rem}[subsection]{Remark}
\numberwithin{equation}{subsection}
\setlist[enumerate]{label=\upshape(\arabic*)}
\def\<{\langle}
\def\>{\rangle}
\def\leq{\leqslant}
\def\geq{\geqslant}
\def\1{\mathds{1}}
\def\a{\alpha}
\def\b{\beta}
\def\C{\mathbb C}
\def\Cc{\mathcal C}
\def\e{\epsilon}
\def\F{\mathbb F}
\def\Ga{\Gamma}
\def\Jc{\mathcal J}
\def\K{\mathcal K}
\def\m{\mathfrak{m}}
\def\norm{\mathcal N}
\def\O{\mathcal O}
\def\P{\mathcal{P}}
\def\Pro{\mathbb P}
\def\Q{\mathbb Q}
\def\s{\sigma}
\def\Wc{\mathcal W}
\def\Z{\mathbb Z}
\newcommand{\diff}{\mathop{}\!\mathrm{d}}
\newcommand{\lege}[2]{%
  \genfrac{(}{)}{}{}{#1}{#2}%
}
\newcommand{\extp}{\@ifnextchar^\@extp{\@extp^{\,}}}
\def\@extp^#1{\mathop{\bigwedge\nolimits^{\!#1}}}
\DeclareMathOperator{\ab}{ab}
\DeclareMathOperator{\aff}{aff}
\DeclareMathOperator{\Aut}{Aut}
\DeclareMathOperator{\can}{can}
\DeclareMathOperator{\Dic}{Dic}
\DeclareMathOperator{\disc}{disc}
\DeclareMathOperator{\et}{\'et}
\DeclareMathOperator{\Fix}{Fix}
\DeclareMathOperator{\Gal}{Gal}
\DeclareMathOperator{\GL}{GL}
\DeclareMathOperator{\Id}{Id}
\DeclareMathOperator{\id}{id}
\DeclareMathOperator{\Img}{Im}
\DeclareMathOperator{\Ind}{Ind}
\DeclareMathOperator{\Spec}{Spec}
\DeclareMathOperator{\tame}{t}
\DeclareMathOperator{\Tr}{Tr}
\DeclareMathOperator{\tr}{tr}
\DeclareMathOperator{\unr}{ur}
\DeclareMathOperator{\WD}{WD}
\DeclareMathOperator{\wild}{w}
\def\mydash{\protect\nobreakdash-\hspace{0pt}}
\DeclareDocumentCommand\bigslant{ m m g }{
{\IfNoValueT{#3}{\left.}\raisebox{.25em}{$#1$}\!\IfNoValueT{#3}{\middle/}\IfNoValueF{#3}{#3}\!\raisebox{-.25em}{$#2$}\IfNoValueT{#3}{\right.}}
}
\begin{document}

\title[Root numbers of curves of genus two with maximal ramification]{Root numbers of $5$-adic curves of genus two having maximal ramification}

\author{Lukas Melninkas}
\address{IRMA, UMR 7501, Université de Strasbourg et CNRS, 7 rue René Descartes, 67000 Strasbourg, France}
\curraddr{}
\email{melninkas@math.unistra.fr}
\thanks{}

\subjclass[2020]{Primary 11G10, 11G20; Secondary 14H25, 11G40}

\date{\today}

\maketitle
\vspace{-0.8cm}
%!TEX root = max_genus2_ext_tran.tex

\begin{abstract} 
The formulas for local root numbers of abelian varieties of dimension one are known. In this paper we treat the simplest unknown case in dimension two by considering a curve of genus 2 defined over a $5$-adic field such that the inertia acts on the first $\ell$-adic cohomology group through the largest possible finite quotient, isomorphic to $C_5\rtimes C_8$. We give a few criteria to identify such curves and prove a formula for their local root numbers in terms of invariants associated to a Weierstrass equation.
\end{abstract}

\section*{Introduction}

Given an abelian variety $A$ defined over a number field $\K$, its global root number $w(A/\K)$ is the sign appearing in the conjectural functional equation of its completed $L$-function. Granting the general Birch--Swinnerton-Dyer conjecture, ${w(A/\K)=-1}$ exactly when the Mordel--Weil rank is odd. Due to Deligne \autocite{deligne_eq_fonctionelle}, we can define $w(A/\K)$ unconditionally by computing the local root numbers $w(A_v/\K_v)$ of the completed abelian variety at each place $v$ of $\K$. 

For each infinite place we have $w(A_v/\K_v)=(-1)^{\dim A}$. If $A$ has good reduction at a finite place $v$, then $w(A_v/\K_v)=1$. This allows to define \[w(A/\K)=\prod_v w(A_v/\K_v),\] the product being taken over all places of $\K$. The local root numbers at places of bad reduction are signs $\pm1$ and are defined in a general way as we explain next. 

Let $p$ be a prime number, let $K/\Q_p$ be a finite extension with an algebraic closure $\overline{K}$, and let $A/K$ be an abelian variety. We choose another prime number $\ell\neq p$ and consider the $\ell$-adic Galois representation $\rho_\ell$ on the étale cohomology group $H^1_{\et}(A_{\overline{K}},\Q_\ell)$. Applying Grothendieck's monodromy construction we obtain a complex Weil--Deligne representation $\WD(\rho_\ell)$, whose isomorphism class does not depend on $\ell$ (see, e.g., \autocite[Cor.~1.15]{sabitova_root}). Next, following Deligne, after choosing an additive character $\psi$ on $K$ and a Haar measure $\diff x$ on $K$, we consider the $\e$-factor $\e(\WD(\rho_\ell),\psi,\diff x)\in\C^\times$. The local root number is then defined as \[w(A/K):=\frac{\e(\WD(\rho_\ell),\psi,\diff x)}{|\e(\WD(\rho_\ell),\psi,\diff x)|}.\] We note that $w(A/K)$ does not depend on $\ell$, $\psi$, or $\diff x$, see, e.g., \autocite[\S11,\S12]{rohrlich}

It follows (see, e.g., \autocite[Prop.~3.1]{chai_semiab}) from the semi-stable reduction theorems and the theory of $p$-adic uniformization that there exists an abelian variety $B/K$ with potentially good reduction and an extension $S$ of $B$ by a torus $T$ such that the rigid analytification of $A$ is a quotient of the analytification of $S$ by a lattice. Then, it follows from the result of Sabitova \autocite[Prop.~1.10]{sabitova_root} that $w(A/K)$ can be determined by computing $w(B/K)$ and the Galois action on $T$. In this paper we treat the case when $A/K$ itself has potentially good reduction. This condition is equivalent to $T=0$ and, by the criterion of Néron--Ogg--Shafarevich, to the condition that the image of inertia via $\rho_\ell$ is finite. By Serre--Tate \autocite[p.~497, Cor.~2]{serre_tate} the representation $\rho_\ell$ is at most tamely ramified whenever $p> 2\dim(A)+1$. 

If $A/K$ is an elliptic curve with potentially good reduction, formulas for root numbers have been given by Rohrlich \autocite{rohrlich_formulas} when $p\geq5$, by Kobayashi \autocite{kobayashi} when $p=3$, and by the Dokchitsers \autocite{dd_root_ellc2} when $p=2$. The case of Jacobians having semistable reduction have been studied by Brumer--Kramer--Sabitova \autocite{brumer_kramer_sabitova}. For general abelian varieties, the case when $\rho_\ell$ is tamely ramified has been studied by Bisatt \autocite{bisatt}. 

\subsection{The main setup and results} We consider a curve $C$ of genus $2$ defined over a $5$-adic field $K$. Let $J(C)/K$ be its Jacobian surface. Our aim is to produce a formula for $w(C/K):=w(J(C)/K)$ in terms of other invariants of $C/K$. We suppose that $J(C)/K$ has potentially good reduction and that the associated Galois representation $\rho_\ell$ is wildly ramified. We suppose further that $\rho_\ell$ has the maximal possible inertia image, isomorphic to the semi-direct product $C_5\rtimes C_8$ where $C_8$ acts on $C_5$ via $C_8\twoheadrightarrow \Aut(C_5)$. By choosing a Weierstrass equation we define a discriminant $\Delta\in K^\times$, whose class in $K^\times/(K^\times)^2$ does not depend on the choice of the equation, see \ref{subs:g2_var_change}. %Let $m(C/K)$ denote the number of irreducible components of the geometric special fiber of the minimal proper model of $C/K$.

Let $k_K$ be the residue field of $K$. We denote by $\lege{\cdot}{k_K}$ the Legendre symbol on $k^\times_K$ and by $(\cdot,\cdot)_K$ the quadratic Hilbert symbol on $K^\times\times K^\times$. We consider $v_K$ the normalized valuation of $K$ such that $v_K(K^{\times})=\Z$. Let $F_5$ denote the Frobenius group on $5$ elements, defined as the semi-direct product $F_5=C_5\rtimes C_4$ where $C_4$ acts faithfully on $C_5$.

\begin{thm}[{Prop.~\ref{prop:hyperell_eq_spec}, Prop.~\ref{prop:hyperell_max_equiv}, Thm.~\ref{thm:max_ramif_rootN}}]\label{thm:max_g2_statement} 
Let $C/K$ be a smooth projective curve of genus $2$ defined over a $5$-adic field $K$. We suppose that the associated $\rho_\ell$ has finite inertia image of order divisible by $5$. There exists an equation $Y^2=P(X)$ defining $C/K$ with unitary, irreducible $P\in K[X]$ of degree $5$ having integral coefficients and a constant term $a_6$ of valuation prime to $5$.

The image of inertia of $\rho_\ell$ is the maximal possible, i.e. isomorphic to $C_5\rtimes C_8$, if and only if any of the following equivalent conditions is verified :
\begin{enumerate}
\item For any discriminant $\Delta$ of $C/K$ the valuation $v_K(\Delta)$ is odd; 
\item The $\F_2$-linear Galois representation on the $2$-torsion points $J(C)[2]$ has inertia image isomorphic to the Frobenius group $F_5$;
\item The Artin conductor $a(C/K)$ of $\rho_\ell$ is odd.
\end{enumerate}
In this case, the root number is given by \[w(C/K)=(-1)^{[k_K:\F_5]+1}\cdot \lege{v_K(a_6)}{k_K}\cdot(\Delta,a_6)_K.\]
\end{thm}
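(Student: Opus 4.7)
Since $5$ divides the order of the inertia image, the wild inertia $C_5$ acts on the six Weierstrass points of $C$ via a $5$-cycle, fixing exactly one. A Frobenius compatibility check shows this fixed point is $K$-rational; moving it to infinity through a $K$-linear change of coordinates yields a model $Y^2 = P(X)$ with $\deg P = 5$, and after an obvious rescaling $P$ becomes monic and integral. Irreducibility of $P$ is the transitivity of $C_5$ on the five finite Weierstrass points, and for any root $\alpha$ of $P$ the extension $K(\alpha)/K$ is totally ramified of degree $5$, so $v_K(\alpha) \in (1/5)\Z$. A further rescaling of $X$ by a power of a uniformiser brings $v_K(\alpha)$ into $(0,1)$, giving $v_K(a_6) = 5v_K(\alpha) \in \{1,2,3,4\}$, coprime to $5$.

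\textbf{Equivalence of the three conditions.} The plan is first to enumerate the inertia images compatible with both containing $C_5$ and having trivial determinant on inertia, the latter being forced by $\det \rho_\ell = \chi_{\mathrm{cyc}}^{-2}$ together with the triviality of $\chi_{\mathrm{cyc}}$ on $I_K$. Among the subgroups of $C_5 \rtimes C_8$ containing $C_5$, a direct computation of determinants excludes $F_5 = C_5 \rtimes C_4$, leaving only $C_5$, $C_5 \times C_2$ and $G := C_5 \rtimes C_8$ as possible inertia images. Each of the three conditions then singles out the maximum case~$G$. Condition~(1) is the discriminant-square-root criterion: the image of inertia in $S_5$ acting on the roots of $P$ lies in $A_5$ (and equals $C_5$) in the two non-maximal cases but equals $F_5 \not\subset A_5$ in the maximal case, which is detected by the parity of $v_K(\Delta)$. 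Condition~(2) holds because the central $C_2 \subset C_8$ acts on $V_\ell J$ as $-\mathrm{id}$, hence trivially modulo $2$; thus the kernel of reduction to $J(C)[2]$ collapses $G$ onto $F_5$ in the maximal case while leaving the image unchanged as $C_5$ in the other two. Condition~(3) follows from the Swan conductor formula $\sw(V) = \sum_{i \geq 1} \frac{|I_i|}{|I_0|} (\dim V - \dim V^{I_i})$: the prefactor $|I_0|^{-1}$ forces the Swan exponent to be a multiple of $4$ when $|I_0| = 5$, a multiple of $2$ when $|I_0| = 10$, and permits odd values only when $|I_0| = 40$.

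\textbf{The root number formula.} By Clifford theory $V|_I \cong \Ind_{C_5 \times C_2}^{G}(\chi \otimes \mu)$, with $\chi$ a nontrivial character of $C_5$ and $\mu$ the nontrivial character of the central $C_2$. Since $C_5 \times C_2$ is normal in $G$ with quotient $C_4$, its preimage in $W_K$ cuts out a Galois tamely totally ramified degree-$4$ extension $L_3/K$, and the full Weil--Deligne representation of $C/K$ takes the form $V = \Ind_{W_{L_3}}^{W_K} \tilde \xi$ for a finite-order Weil character $\tilde \xi$ extending $\chi \otimes \mu$. Deligne's inductivity formula for $\e$-factors yields
\[
\e(V, \psi, \diff x) = \lambda_{L_3/K}(\psi, \diff x) \cdot \e\bigl(\tilde \xi,\, \psi \circ \Tr_{L_3/K},\, \diff x_{L_3}\bigr),
\]
whose normalised sign is $w(C/K)$. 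The Langlands constant $\lambda_{L_3/K}$ is a normalised quartic tame Gauss sum, and a direct calculation shows its sign is $(-1)^{[k_K:\F_5]+1}$. The character $\e$-factor $\e(\tilde \xi)$ is handled via local class field theory: one gives a Kummer description of $L_3/K$ whose defining element is tied to $a_6$, identifies the restriction $\tilde \xi|_{L_3^\times}$ in terms of $\Delta$, and assembles the resulting quadratic contributions, via functoriality of the Hilbert symbol, into $\lege{v_K(a_6)}{k_K} \cdot (\Delta, a_6)_K$. The main technical obstacle is the explicit identification of $(L_3, \tilde \xi)$ in terms of the Weierstrass data $(P, \Delta, a_6)$, together with the precise sign tracking in both the Langlands constant and the character $\e$-factor.
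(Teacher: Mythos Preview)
Your classification of possible inertia images has a gap: the subgroups of $C_5\rtimes C_8$ containing $C_5$ are $C_5$, $C_{10}$, $\Dic_5$, and $C_5\rtimes C_8$ itself. The group you exclude by determinant, $F_5=C_5\rtimes C_4$ with faithful $C_4$-action, is not among these in the first place; the order-$20$ subgroup that \emph{does} occur is $\Dic_5$, where $C_4$ acts through $C_2$, and its determinant on the symplectic $4$-space is trivial, so it survives. More seriously, your Swan-conductor argument for condition~(3) only shows that odd conductor is \emph{compatible} with $|I_0|=40$, not that it is forced: with $|I_0|=40$ each term $\frac{|I_i|}{|I_0|}\dim V$ contributes $\tfrac12$, and you give no reason why the number of nonzero terms is odd. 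The paper does not argue this directly either; instead it links $v_K(\Delta)$ to the tame degree of the inertially minimal extension via Liu's explicit description $L'_{\tame}=K^{\unr}(\beta^{1/8})$, and separately proves $v_K(\Delta)\equiv a(\rho_\ell)\bmod 2$ through Liu's minimal-discriminant/conductor formula and a case check of Namikawa--Ueno types. Both ingredients are needed, and neither is visible in your sketch.

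For the root number, your split of the sign is off: with the paper's additive character $\psi_k$ the Langlands constant $w(\Ind_{W_H}^{W_K}\1,\psi_k)$ equals $-1$, not $(-1)^{[k_K:\F_5]+1}$ (the two $[k_K:\F_5]$-dependent pieces in its decomposition cancel). The parity-dependent sign $(-1)^{[k_K:\F_5]}$ enters through the term $\chi\circ\theta_H(d_{\alpha_1,\chi})$ with $d_{\alpha_1,\chi}=\norm_{M/H}(\alpha_1-\sigma(\alpha_1))$, and evaluating this is the technical heart of the paper: one needs the Abbes--Saito/Kobayashi gauge formula for $\e$-factors, the relation $\delta_\chi\equiv d_{\alpha_1,\chi}/v_M(\alpha_1)\bmod U^1$, and --- crucially --- an Artin--Schreier description of the special fibre $\Cc'_{k_L}\simeq C_{a,0}$ over the good-reduction field, from which one extracts $\chi(\varphi_L)$ via Lefschetz trace computations on $C_{1,0}/\F_5$. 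Your phrase ``identifies the restriction $\tilde\xi|_{L_3^\times}$ in terms of $\Delta$ and assembles the resulting quadratic contributions'' elides exactly this machinery; also, the degree-$4$ field $L_3=H$ is $K((-\Delta)^{1/4})$, not a Kummer extension built from $a_6$.
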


\begin{rem} The setting of Thm.~\ref{thm:max_g2_statement} is a particular case of the study by Coppola \autocite{coppola_max}, where a description of $\rho_\ell$ is given. Very recently, building on Coppola's results, Bisatt \autocite[Thm.~2.1]{bisatt_wild} produced similar formulas of root numbers of hyperelliptic curves.\end{rem}

\subsection{Structure of the paper} In Section~\ref{sect:wild_char_rootN} we recall some theory of $\e$-factors of one-dimensional Weil representations and give formulas for root numbers in some wild ramification cases by using explicit local class field theory. In Section~\ref{sect:wild_jac2} we present some properties of genus $2$ curves with wild ramification. In Section~\ref{sect:spec_f} we employ the Artin--Schreier theory in order to study $\rho_\ell$ via the automorphisms of curves over finite fields. In Section~\ref{sect:max_inertia} we prove a few characterizations of the maximal ramification case and exploit some of its implications. Section~\ref{sect:max_proof} is dedicated to proving the formula of Thm.~\ref{thm:max_g2_statement}, where we connect the results of Section~\ref{sect:wild_char_rootN} to a particular Weierstrass equation. 

\subsection*{Aknowledgements} I thank my thesis advisors Adriano Marmora and Rutger Noot as well as Kęstutis Česnavičius and Takeshi Saito for their remarks and suggestions concerning the manuscript. I also thank Jeff Yelton for answering my questions about his results on the splitting fields of the $4$-torsion of Jacobians. The study presented in this paper constitutes a part of my doctoral thesis. The results of Thm.~\ref{thm:max_g2_statement} have been obtained independently of the preprint \autocite{bisatt_wild}.

\section*{Notation and conventions}
Let $p$ be a prime number, and let $K/\Q_p$ be a finite extension. We adopt the convention that every algebraic extension of $K$ used in this text is a subfield of $\overline{K}$. We fix the following notation. 
\vspace{3mm}

%\begin{table}[h!]
{\centering
%\caption{Notation for a $p$-adic field $K$}
\begin{tabular}{cp{0.40\textwidth}|}\label{table:notation_p_adic}
%$K$ & a finite extension of $\Q_p$;\\
$v_K$ & the valuation of $K$ normalized by $v_K(K^\times)=\Z$;\\
$\O_K$ & the ring of integers; \\
$\m_K$ & the maximal ideal;\\
$\varpi_K$ & a uniformizer;\\
$k_K$ & the residue field;\\
$q_K$ & the order $|k_K|$;\\
$\m^n_K$ & the subgroup $\varpi^n_K\O_K\subset K$ for any $n\in\Z$;\\
$U^n_K$ & $1\!+\!\m^n_K$ for $n\!\geq\!1$, and $U^0_K\!=\!\O_K^\times$; \\
$\lege{\cdot}{k_K}$ & the Legendre symbol on $k_K^\times$;\\
$(\cdot,\cdot)_K$ & the quadratic Hilbert symbol on $K^\times\times K^\times$;\\
\end{tabular}
\begin{tabular}{cp{0.35\textwidth}}
$\overline{K}$ & an algebraic closure of $K$;\\
$\overline{k}_K$ & the residue field of $\overline{K}$;\\
$\Ga_K$ & the group $\Gal(\overline{K}/K)$;\\
$W_K$ & the Weil subgroup of $\Ga_K$; \\
$I_K$ & the inertia subgroup; \\
$I_K^{\wild}$ & the wild inertia subgroup;\\
$\varphi_K$ & a lift in $W_K$ of the geometric Frobenius;\\
%$\theta_K$ & the normalized Artin's reciprocity isomorphism $K^\times\cong W_K^{\ab}$.\\
$\chi_{\unr}$ & the unramified (cy\-clo\-to\-mic) character $W_{\Q_p}\to \C^\times$ such that $\chi_{\unr}(\varphi_K)=q_K^{-1}$ for every finite $K/\Q_p$.\\ 
\end{tabular}}
%\end{table}
\vspace{3mm}

By a \textit{Weil representation} on a complex vector space $V$ we mean a group homomorphism $\rho:W_K\to \GL(V)$ such that $\rho(I_K)$ is finite. For any $s\in\C^\times$, its Tate twist is $\rho(s):=\rho\otimes\chi_{\unr}^s$. 

Let $\theta_K: K^\times\cong W_K^{\ab}$ be Artin's reciprocity map normalized to send a uniformizer to the class of a geometric Frobenius lift. It follows that $||\cdot||_K:=\chi_{\unr}\circ\theta_K$ is the non-Archimedean norm on $K$ induced by $v_K$. For every finite Galois extension $L/K$, the map $\theta_K$ induces an isomorphism $\theta_{L/K}:K^\times/\norm_{L/K}(L^\times)\cong \Gal(L/K)^{\ab}$, where $\norm_{L/K}:L^\times \to K^\times$ is the norm map. %Abusively, we will make no notational difference between a one-dimensional Weil representation of $W_K$ and the induced quasi-character of $K^\times$. 

Given schemes $X$, $S$, $S'$ as well as morphisms $X\to S$ and $S'\to S$, we will write $X_{S'}:=X\times_S S'$, and also $X_{R'}:=X\times_R R':=X_{S'}$ if $S'=\Spec R'$ and $S=\Spec R$ are affine. 

\section{Root numbers and explicit class field theory}\label{sect:wild_char_rootN}

Let $p>2$ be a prime number and let $K/\Q_p$ be a finite extension.

\subsection{Choice of an additive character}\label{subs:add_char_ex} By an \textit{additive character} we mean a locally constant group homomorphism $\psi:K\to\C^\times$. By $n(\psi)$ we denote the largest integer $n$ such that $\psi$ is trivial on $\m_K^{-n}$, called the \textit{level} of $\psi$. In order to simplify the computations of the root number we fix a particular character. Let $\psi_k$ be the composition \[\psi_k:\O_K\twoheadrightarrow k\xrightarrow{\tr_{k/\F_p}}\Z/p\Z\xhookrightarrow{\exp\left(\frac{2\pi i}{p}\cdot\right)}\C^\times.\] We see that $\psi_k$ is trivial on $\m_K$. Since $\C^\times$ is divisible, $\psi_k$ can be extended non-uniquely to an additive character of $K$, which we again denote by $\psi_k$. Since $\tr_{k/\F_p}$ is nontrivial, independently on the choice of the extension, we have $n(\psi_k)=-1$. Moreover, every additive character of level $-1$ is given by $x\mapsto\psi_k(cx)$ for some $c\in\O_K^\times$.

\subsection{$\psi$-gauges of Weil characters}\label{subs:gauges} Let $\chi:W_K\to\C^\times$ be a one-dimensional ramified Weil representation. We identify $\chi$ with a quasi-character of $K^\times$ via $\theta_K$. The Artin conductor $a(\chi)$ is the smallest integer $a$ such that $\chi$ is trivial on $U_K^a$. Let $n:=\big\lfloor\frac{a(\chi)+1}{2}\big\rfloor.$ The map $x\mapsto \chi(1+x)$ defined for $x\in\m_K^{n}$ is additive, trivial on $\m_K^{a(\chi)}$, and extends to an additive character $\psi_\chi$ of $K$ with $n(\psi_\chi)=a(\chi)$. Let $\varpi_K\in\m_K$ be a uniformizer. Then $x\mapsto \psi_\chi(\varpi_K^{a(\chi)-1}x)$ has level $-1$. Thus, there exists $c_\chi \in K^\times$, called a \textit{$\psi_k$-gauge of $\chi$}, of valuation $-a(\chi)+1$, unique modulo $\m_K^{-n+1}$, such that for all $x\in\m_K^n$, \begin{equation}\label{eq:gauge_def}\chi(1+x)=\psi_k(c_{\chi}x).\end{equation}

\subsection{Epsilon factors of characters}\label{subs:root_n_char} In addition to the setting of \ref{subs:gauges}, we fix a Haar mesure $\diff x$ on $K$. We recall from \autocite[(3.4.3.2)]{deligne_eq_fonctionelle} that the \textit{$\epsilon$-factor} of $\chi$ relative to $\psi_k$ and $\diff x$ is defined as the integral
\begin{equation}\label{eq:eps_def}\epsilon(\chi,\psi_k,\diff x):= \int_{\varpi_K^{-a(\chi)+1}\O_K^\times}\chi^{-1}(x)\psi_k(x)\diff x.\end{equation}
We will be mainly interested in the \textit{root number} \[w(\chi,\psi_k):=\frac{\epsilon(\chi,\psi_k,\diff x)}{|\epsilon(\chi,\psi_k,\diff x)|},\] which does not depend on $\diff x$. 

For $a,b\in\C^\times$ we will write $a\approx b$ whenever $ab^{-1}$ is contained in the subgroup of $\C^\times$ generated by positive real numbers and the complex roots of unity of $p$-power orders. We note that if $p\neq2$ and $a,b\in\{-1,1\}$ are such that $a\approx b$, then $a=b$.

\subsection{}\label{sect:weil_char_to_finite} The group $\chi(I_K)$ is finite and cyclic, we denote its order by $ep^r$ with $e$ prime to $p$. We view the restriction $\chi |_{I_K}$ as a character of the group $\Gal(K^{\ab}/K^{\unr})$. The group $\ker(\chi|_{I_K})$ cuts out an abelian extension $L'/K$ containing $K^{\unr}$. The closure of the subgroup generated by $\varphi_K$ in $\Gal(L'/K)$ cuts out a totally ramified abelian extension $L/K$, such that $L'=LK^{\unr}$. We then have canonical isomorphisms \[\bigslant{\Gal(K^{\ab}/K^{\unr})}{\ker(\chi|_{I_K})}\cong \Gal(L'/K^{\unr})\cong \Gal(L/K).\] The restriction $\chi|_{I_K}$ induces a faithful complex one-dimensional representation of the finite group $\Gal(L/K)$, and thus $\Gal(L/K)$ must be cyclic. Let $M/K$ be the unique subextension of $L/K$ of degree $p^r$. Then $\chi^e|_{I_K}$ has order $p^r$ and induces a faithful character of the cyclic group $\Gal(M/K)$.

The following is an amalgamation of some of the results of \autocite{kobayashi} and \autocite{abbes_saito}.

\begin{thm}\label{thm:eps_gauges} Let $\psi_k$ be as in \ref{subs:add_char_ex}. Let $\chi:W_K\to \C^\times$ be a Weil character such that $|\chi(I_K^{\wild})|=p$. Let $ep=|\chi(I_K)|$ with $e$ prime to $p$. Let $M/K$ be as in \ref{sect:weil_char_to_finite}. We denote by $\s\in\Gal(M/K)$ the generator that is sent to $\exp(\frac{2\pi i}{p})$ via $\chi$. Let $\varpi_M$ be a uniformizer of $M$, and let $\delta_\chi:= \norm_{M/K}(1-\frac{\s(\varpi_M)}{\varpi_M})$. Let us write $\delta_\chi=u\varpi_K^{v_K(\delta_\chi)}$ with $u\in\O_K^\times$, whose class in $k_K^\times$ we denote by $\overline{u}$.
\begin{enumerate}\item If $a(\chi)$ is even, then $\epsilon(\chi,\psi_k,\diff x)\approx \chi(\delta_\chi)$;
\item If $a(\chi)$ is odd, and $p\equiv 1\bmod 4$, then \[\epsilon(\chi,\psi_k,\diff x)\approx -\chi(\delta_\chi)\cdot\lege{2\overline{u}}{k_K}\cdot(-1)^{[k_K:\F_p]}.\]
\end{enumerate}
\end{thm}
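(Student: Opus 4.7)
The strategy is to compute the $\epsilon$-factor by means of the $\psi_k$-gauge $c_\chi$ introduced in \ref{subs:gauges}, and then to identify $\chi^{-1}(c_\chi)$ with $\chi(\delta_\chi)$ via local class field theory, following the methods of Kobayashi and Abbes--Saito.

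\textbf{Step 1: gauge computation of $\epsilon$.} Set $n=\lfloor(a(\chi)+1)/2\rfloor$ and substitute $x=c_\chi z$ in \eqref{eq:eps_def}. The range becomes $z\in\O_K^\times$ and the integrand factors as $\chi^{-1}(c_\chi)\cdot \chi^{-1}(z)\psi_k(c_\chi z)$. Writing $z=z_0(1+y)$ with $z_0$ ranging over a system of representatives for $\O_K^\times/U_K^n$ and $y\in\m_K^n$, the defining relation $\chi(1+y)=\psi_k(c_\chi y)$ eliminates the factor coming from $1+y$. The remaining sum over $z_0$ is of orthogonality type: when $a(\chi)$ is even, only one representative survives and $\epsilon(\chi,\psi_k,\diff x)\approx \chi^{-1}(c_\chi)$; when $a(\chi)$ is odd, an additional filtration layer $\m_K^{n-1}/\m_K^n\cong k_K$ remains on which the residual integrand is a nontrivial quadratic character of $k_K$ twisted by an additive one. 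The resulting quadratic Gauss sum evaluates classically to $(-1)^{[k_K:\F_p]}\cdot\lege{2\overline{u}}{k_K}$ up to the usual signs, the hypothesis $p\equiv 1\bmod 4$ being precisely what ensures that $\sqrt{-1}\in\C$ is absorbed by $\approx$.

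\textbf{Step 2: matching $c_\chi$ with $\delta_\chi$.} It remains to show $\chi^{-1}(c_\chi)\approx \chi(\delta_\chi)$. The valuations already agree: Serre's formula for the different of the totally ramified cyclic extension $M/K$ of degree $p$ gives $v_K(\mathfrak{d}_{M/K})=a(\chi)-1=v_K(c_\chi^{-1})=v_K(\delta_\chi)$, the last equality coming from the identity $\delta_\chi=\norm_{M/K}(1-\s(\varpi_M)/\varpi_M)$ and the fact that $M/K$ is totally ramified. For the unit parts, the gauge $c_\chi$ encodes, via $\theta_K$, the linearization of $\chi$ on $1+\m_K^n$, while $\delta_\chi$ is the image under the norm map of a canonical generator of the $\s$-eigenspace of the ramification filtration. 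Combining the norm compatibility $\chi\circ\norm_{M/K}=\chi|_{W_M}$ with the Abbes--Saito description of the refined Swan conductor yields $\chi(c_\chi\delta_\chi)\approx 1$.

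\textbf{Main obstacle.} The principal difficulty is Step 2, the precise matching between the transcendentally defined $c_\chi$ and the purely arithmetic $\delta_\chi$. Both are canonical only up to a specific ambiguity ($c_\chi$ modulo $\m_K^{-n+1}$, $\delta_\chi$ through the choice of uniformizer $\varpi_M\in M$), and their equality up to $\approx$ requires carefully comparing the explicit reciprocity map on $U_K^n/U_K^{a(\chi)}$ with Serre's formula for the different. Once this is settled, the even case is immediate and the odd case reduces to the classical evaluation of a quadratic Gauss sum over $k_K$.
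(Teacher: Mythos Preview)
Your overall strategy matches the paper's: compute $\epsilon$ in terms of the gauge $c_\chi$, then show $\chi^{-1}(c_\chi)\approx\chi(\delta_\chi)$. The paper executes Step~1 by quoting the Abbes--Saito formula \autocite[(8.7.3)]{abbes_saito} directly rather than rederiving it; your direct orthogonality sketch would recover essentially the same expression, though you omit the harmless factor $\psi_k(c_\chi)\approx1$ and you misattribute the factor $\lege{2\overline{u}}{k_K}$: it does not come from the Gauss sum itself (which contributes only $(-1)^{[k_K:\F_p]}$ when $p\equiv1\bmod4$) but from the Hilbert symbol $(-2c,\varpi_K)_K$ attached to the ``center'' of the residual quadratic form, and one then needs the identity $(c,\varpi_K)_K=\lege{\overline{u}}{k_K}$, which already uses Step~2.

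Step~2 is where your sketch has a genuine gap. First, the sentence ``Serre's formula for the different gives $v_K(\mathfrak d_{M/K})=a(\chi)-1$'' is false: for the totally ramified degree-$p$ extension $M/K$ with break $t$ one has $v_M(\mathfrak d_{M/K})=(p-1)(t+1)=(p-1)a(\chi)$, not $a(\chi)-1$. What equals $a(\chi)-1$ is $v_K(\delta_\chi)=v_M\!\bigl(1-\tfrac{\sigma(\varpi_M)}{\varpi_M}\bigr)=t$, which is a separate (and easier) computation. Second, the phrases ``norm compatibility $\chi\circ\norm_{M/K}=\chi|_{W_M}$'' and ``Abbes--Saito description of the refined Swan conductor'' do not combine to give $c_\chi\delta_\chi\in U_K^1$; the norm relation lives on $M^\times$, whereas $c_\chi$ is defined purely on $K^\times$ through the additive character. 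The actual input the paper uses (its Lemma~\ref{lem:delta_gauge}) is the explicit reciprocity law \autocite[XV.\S3, Exercise~1]{serre_localfields}: on the top filtration step $U_K^t/U_K^{t+1}$ the Artin map is given by
\[
\theta_{M/K}(v)=\sigma^{\,\tr_{k_K/\F_p}\!\bigl((v-1)/\delta_\chi\bmod\m_K\bigr)},
\]
which, compared with the gauge relation $\chi(1+x)=\psi_k(c_\chi x)$, forces $\psi_k\bigl((1-c_\chi\delta_\chi)\O_K\bigr)=1$ and hence $c_\chi\delta_\chi\in U_K^1$. This explicit formula is the missing idea; without it your Step~2 does not close, and the odd-conductor computation of $\lege{\overline{u}}{k_K}$ in Step~1 cannot be completed either.
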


%\begin{rem} We will only need the part (1) for our proofs. \end{rem}

\begin{lem}\label{lem:delta_gauge}
We have $v_K(\delta_\chi)=a(\chi)-1$ and $c_\chi\delta_\chi \in U^1_K$. In particular, $\chi^{-1}(c_\chi)\approx \chi(\delta_\chi)$.
\end{lem}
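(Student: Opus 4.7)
The plan is to first verify the valuation equality $v_K(\delta_\chi)=a(\chi)-1$ via ramification theory, and then to refine it to the unit congruence $c_\chi\delta_\chi \in U^1_K$ via explicit local class field theory; the ``in particular'' assertion follows at once from $\chi(U^1_K)\subset \mu_{p^\infty}$.

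For the valuation, I would analyze the lower-numbering ramification filtration of the totally ramified cyclic extension $L/K$ of degree $ep$ introduced in \ref{sect:weil_char_to_finite}: it has the shape $G_0=\Gal(L/K)\supset G_1=\cdots=G_{u_L}=P\supset G_{u_L+1}=1$ for some integer $u_L$, where $P$ denotes the Sylow $p$-subgroup. Herbrand's theorem applied to the quotient $\Gal(L/K)\twoheadrightarrow\Gal(M/K)$ (whose kernel is the tame complement of $P$ of order $e$) then shows that $M/K$ has a unique lower-numbering ramification break at $u_M=u_L/e$, the integrality of which is consistent with Hasse--Arf for $L/K$. By the definition of the ramification subgroups, $v_M(\sigma(\varpi_M)-\varpi_M)=u_M+1$; and since $M/K$ is totally ramified, $v_K\circ\norm_{M/K}=v_M$ on $M^\times$, so
\[
v_K(\delta_\chi)=v_M\!\left(1-\tfrac{\sigma(\varpi_M)}{\varpi_M}\right)=u_M.
\]
Meanwhile, the Artin conductor formula $a(\chi)=\sum_{i\geq 0}|G_i|/|G_0|$ applied to $\chi$ as a faithful character of $\Gal(L/K)$ gives $a(\chi)=1+u_L/e=u_M+1$, yielding the valuation claim.

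For the sharper statement $c_\chi\delta_\chi\in U^1_K$, note that the natural ambiguity $\m_K^{-n+1}$ in $c_\chi$ (with $n=\lfloor(a(\chi)+1)/2\rfloor$) becomes, after multiplication by $\delta_\chi$ of valuation $a(\chi)-1$, an ambiguity in $\m_K^{a(\chi)-n}\subset\m_K$, so it suffices to match the leading coefficients of $c_\chi$ and $\delta_\chi^{-1}$ in $k_K^\times$. The key input is an explicit reciprocity formula of Schmid--Dwork type, identifying the class of $\delta_\chi$ in $K^\times/\norm_{M/K}(M^\times)$ with the preimage of $\sigma^{-1}$ under $\theta_{M/K}$. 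Combined with the defining relation $\chi(1+x)=\psi_k(c_\chi x)$ on $\m^n_K$ and the nondegeneracy of the trace pairing on $k_K$, this pins down the leading coefficient of $c_\chi$ as that of $\delta_\chi^{-1}$, whence the congruence.

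The main obstacle is precisely this comparison step: it requires careful bookkeeping of normalizations (the sign conventions in Artin's map and in the Schmid--Dwork formula, independence from the choice of $\varpi_M$, and the decomposition $\chi=\chi_t\chi_w$ into a tame part $\chi_t$ of order $e$ and a wild part $\chi_w$ of order $p$, with only $\chi_w$ contributing at levels $\geq 1$). Once established, the ``in particular'' statement is immediate: $U^1_K$ is pro-$p$, so $\chi(c_\chi\delta_\chi)\in\mu_{p^\infty}$, giving $\chi^{-1}(c_\chi)=\chi(\delta_\chi)\cdot\chi(c_\chi\delta_\chi)^{-1}\approx\chi(\delta_\chi)$.
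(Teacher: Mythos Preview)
Your overall plan coincides with the paper's: establish $v_K(\delta_\chi)=a(\chi)-1$ via ramification theory, then use explicit local class field theory to compare the leading coefficients of $c_\chi$ and $\delta_\chi^{-1}$, and deduce the final assertion from $\chi(U^1_K)\subset\mu_{p^\infty}$. For the valuation you route through $L/K$ and Herbrand, whereas the paper works directly with $M/K$ via the observation $a(\chi)=a(\chi^e)$; both are valid.

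The gap lies in your stated ``key input''. You assert that a Schmid--Dwork formula identifies the class of $\delta_\chi$ in $K^\times/\norm_{M/K}(M^\times)$ with the preimage of $\s^{-1}$ under $\theta_{M/K}$. This is false as written: by definition $\delta_\chi=\norm_{M/K}\bigl(1-\tfrac{\s(\varpi_M)}{\varpi_M}\bigr)$ is a norm from $M$, so its class in that quotient is trivial, and no information about $c_\chi$ can be extracted from it. What is actually needed is the explicit description of $\theta_{M/K}$ on the top filtration piece $U_K^t/U_K^{t+1}$ (with $t=a(\chi)-1$), namely
\[
\theta_{M/K}(1+x)=\s^{\,\tr_{k_K/\F_p}((x/\delta_\chi)\bmod\m_K)}\qquad(x\in\m_K^t),
\]
which the paper draws from \autocite[XV.\S3,~Exercise~1]{serre_localfields}. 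Applying $\chi^e$ (which factors through $\Gal(M/K)$) yields $\chi^e(1+x)=\psi_k(e\delta_\chi^{-1}x)$; comparing with the $e$th power of the gauge relation, $\chi^e(1+x)=\psi_k(ec_\chi x)$, on the same range $\m_K^t$ (where $e\delta_\chi^{-1}\m_K^t=\O_K$) and using $n(\psi_k)=-1$ forces $1-c_\chi\delta_\chi\in\m_K$. With this corrected reciprocity input in place of your misstated one, your outline goes through and is essentially the paper's argument.
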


\begin{proof}
The lemma is essentially proved in \cite[p.~618]{kobayashi}. We adapt Kobayashi's argument in our setting.

Let $t$ be the largest integer such that the $t$-th ramification subgroup $G_t$ of $\Gal(M/K)$ is nontrivial. We then have $G^t=G_t=\Gal(M/K)$ and $G^{t'}=\{1\}$ for $t'>t$, see \autocite[V.\S3]{serre_localfields}. The reciprocity map (see \autocite[XV.\S2]{serre_localfields}) and $\chi$ induce a commutative diagram
\begin{equation} \begin{tikzcd}
\bigslant{U_K^t}{U_K^{t+1}\norm_{M/K}(U_M^t)} \arrow[r, "\sim"] & G^t= \Gal(M/K) \arrow[r, hookrightarrow ,"\chi^e|_{I_K}"] & \C^\times \\
U_K^t \arrow[u, two heads] \arrow[r, hookrightarrow] & \Gal(K^{\ab}/K^{\unr}) \arrow[u, two heads] \arrow[r, "\chi|_{I_K}"] & \arrow[u, "z\mapsto z^e"'] \C^\times.
\end{tikzcd}\end{equation}

As $e$ is prime to $p$ we observe that $a(\chi)=a(\chi^e)$ and that $a(\chi^e)=t+1$. Since $\s\in G_t\!\setminus\! G_{t+1}$, by using \autocite[IV.Prop.~5]{serre_localfields} we obtain \begin{equation}v_K(\delta_\chi)=v_M(1-\frac{\s(\varpi_M)}{\varpi_M})=t=a(\chi)-1.\end{equation}

Applying \autocite[XV.\S3,~Exercise~1]{serre_localfields} shows that for all $v\in U_K^t$,
\begin{equation}\label{eq:explicit_lcft} \theta_{M/K}(v) = \s^{\tr_{k_K/\F_p}\left((v-1)/\delta_\chi \bmod \m_K \right)}. \end{equation} For every $x\in\m_K^{a(\chi)-1}\subseteq\m_K^n$, taking the image of \eqref{eq:explicit_lcft} by $\chi^e$, we obtain
$\chi^e(1+x)=\psi_k(e\delta_\chi^{-1}x)$, and taking the $e$-th power of \eqref{eq:gauge_def} gives $\chi^e(1+x)=\psi_k(ec_\chi x)$. We note that $e\delta_\chi^{-1}\m_K^{a(\chi)-1}=\O_K$. Thus, \[\psi_k((1-\delta_\chi c_\chi)\O_K)=\psi_k((e\delta_{\chi}^{-1}-ec_\chi)\m_K^{a(\chi)-1})=1.\] Therefore, we must have $1-\delta_\chi c_\chi\in\m^{-n(\psi_k)}_K=\m_K$. The last part of the lemma follows from the fact that $\chi(U_K^1)$ is a finite $p$-group.
\end{proof}

\begin{proof}[Proof of Theorem~\ref{thm:eps_gauges}]
We shall apply \autocite[Prop.~8.7,~(ii)]{abbes_saito} which allows to express the epsilon factor using a refined $\psi_k$-gauge $c$ of $\chi$. Abbes--Saito proves that there exists an element $c\in K$, unique modulo $\m_K^{-n+1}$, such that for every $x\in\m_K^{a(\chi)-n}$ we have \[\chi\left(1+x+{\textstyle \frac{x^2}{2}}\right)=\psi_k(cx).\] Let $\tau:k_K\to K$ be the Teichmüler lift. We consider the quadratic Gauss sum \[G_{\psi_k}:=\sum_{x\in k_K}\psi_k(\tau(x)^2).\] The formulas $G_{\psi_k}=\sum_{x\in k_K^\times}\lege{x}{k_K}\psi_k(\tau(x))$ and $G_{\psi_k}^2= \lege{-1}{k_K}q_K$ are well-known (see, e.g., \autocite[\S1.1]{gauss_sums_book}). The Abbes--Saito formula \autocite[\nopp{}(8.7.3)]{abbes_saito} can be rewritten as
\begin{equation}\label{eq:abbes_saito}\epsilon(\chi,\psi_k,\diff x)\approx \chi^{-1}(c)\psi_k(c)\lege{-1}{k_K}^{\binom{a(\chi)}{2}}\!\!G_{\psi_k}^{-a(\chi)}\times \begin{cases}1 \!&\text{if $a(\chi)$ is even,}\\ (\!-2c,\varpi_K)_K \!&\text{if $a(\chi)$ is odd}.\end{cases}\end{equation}
Since $c$ is also a $\psi_k$-gauge of $\chi$, we have $\chi^{-1}(c)\approx\chi(\delta_\chi)$ by Lemma~\ref{lem:delta_gauge}. For $r\in\Z$ large enough, $\psi_k(p^rc)=1$, so $\psi_k(c)\approx1$. If $a(\chi)$ is even, then it is straightforward to verify that (1) holds.

We assume the hypotheses of (2). Then $\lege{-1}{k_K}=1$, and $G_{\psi_k}\approx-(-1)^{[k_K:\F_p]}$, see \autocite[Thm.~11.5.4]{gauss_sums_book}. We also have $(-2,\varpi_K)_K=\lege{2}{k_K}$. Taking into account Lemma~\ref{lem:delta_gauge} and making the relevant substitutions into \eqref{eq:abbes_saito} we are left to prove that $(c,\varpi_K)_K=\lege{\overline{u}}{k_K}$. Lemma~\ref{lem:delta_gauge} also shows that $c\in u^{-1}\varpi_K^{-a(\chi)+1}U^1_K$. Since $a(\chi)$ is odd and $U^1_K$ is pro-$p$, the Hilbert symbol is trivial on $\varpi_K^{-a(\chi)+1}U^1_K$, thus \[(c,\varpi_K)_K=(u,\varpi_K)_K=\lege{\overline{u}}{k_K}.\qedhere\]
\end{proof}

\begin{prop}\label{prop:kobayashi_D_delta}
We continue in the situation of Thm.~\ref{thm:eps_gauges}. Let $\a\in \O_M$ be such that $p\nmid v_M(\a)$, and let $D_{\a,\chi}:=\norm_{M/K}\left(1-\frac{\s(\a)}{\a}\right)$. Then \[D_{\a,\chi}\equiv v_M(\a)\delta_\chi\bmod U_K^1.\] \end{prop}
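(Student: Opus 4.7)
The plan is to reduce the claim to a single identity inside $M$: to show that
\[
1 - \frac{\sigma(\alpha)}{\alpha} = v_M(\alpha)\cdot\left(1-\frac{\sigma(\varpi_M)}{\varpi_M}\right)\cdot(1+\eta)
\]
for some $\eta\in\m_M$, and then to pass to $K$ by applying $\norm_{M/K}$, handling the extra factor $v_M(\alpha)^{[M:K]-1}$ that appears via Fermat's little theorem.

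First I would write $\alpha = u_0\varpi_M^{m}$ with $m := v_M(\alpha)$ and $u_0\in\O_M^\times$, giving
\[
\frac{\sigma(\alpha)}{\alpha} \;=\; \frac{\sigma(u_0)}{u_0}\left(\frac{\sigma(\varpi_M)}{\varpi_M}\right)^{\!m}.
\]
As in the proof of Lemma~\ref{lem:delta_gauge}, $\sigma$ lies in the topmost nontrivial lower-numbering ramification subgroup $G_t$ of $\Gal(M/K)$, with $t = a(\chi)-1\geq 1$ and $G_{t+1}=\{1\}$. By the standard description of the ramification filtration, $\sigma$ acts trivially on $\O_M/\m_M^{t+1}$, so $\sigma(u_0)/u_0\in U_M^{t+1}$; and setting $\beta := 1 - \sigma(\varpi_M)/\varpi_M$, the same lemma provides $v_M(\beta) = t$. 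A binomial expansion gives $(1-\beta)^{m} \equiv 1 - m\beta \pmod{\m_M^{2t}}$, and since $2t\geq t+1$ these estimates combine to yield $1 - \sigma(\alpha)/\alpha \equiv m\beta \pmod{\m_M^{t+1}}$. Because $p\nmid m$, the quantity $m\beta$ has valuation exactly $t$, so this can be factored as $1 - \sigma(\alpha)/\alpha = m\beta\,(1+\eta)$ with $\eta\in\m_M$.

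To finish, I would apply $\norm_{M/K}$ and use that, for the totally ramified extension $M/K$, the norm sends $U_M^1$ into $U_K^1$. This gives $D_{\alpha,\chi} = m^{[M:K]}\cdot\delta_\chi\cdot u'$ with $u'\in U_K^1$. With $[M:K]=p^r$, it remains to observe that $m^{p^r}\equiv m \pmod{U_K^1}$, which is a direct consequence of Fermat's little theorem: since $p\nmid m$ and $(p-1)\mid(p^r-1)$, one has $m^{p^r-1}\in 1+p\Z_p\subseteq U_K^1$. Combining, $D_{\alpha,\chi}\equiv m\,\delta_\chi \pmod{U_K^1}$, as required. The argument is essentially routine given Lemma~\ref{lem:delta_gauge}; the only points that need a little care are the bookkeeping between additive congruences modulo $\m_M^{t+1}$ and multiplicative ones modulo $U_M^{t+1}$, and the role of the hypothesis $p\nmid v_M(\alpha)$, which is precisely what prevents the leading term $m\beta$ from degenerating to something of larger valuation.
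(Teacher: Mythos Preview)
Your argument is correct. The paper itself does not supply a proof here, merely citing Kobayashi (p.~614) for the case $p=3$ and asserting that it generalizes; your write-up is precisely that generalization carried out in full. The decomposition $\alpha=u_0\varpi_M^{m}$, the use of $\sigma\in G_t\setminus G_{t+1}$ to control $\sigma(u_0)/u_0\in U_M^{t+1}$ and $v_M(\beta)=t$, the binomial truncation modulo $\m_M^{2t}$, and the passage through $\norm_{M/K}$ are exactly the standard steps. One minor remark: in the present setting $|\chi(I_K^{\wild})|=p$ forces $r=1$, so $[M:K]=p$ and the Fermat step is literally $m^{p}\equiv m\pmod p$; your more general observation that $(p-1)\mid(p^{r}-1)$ is harmless but unnecessary here.
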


\begin{proof}
A detailed proof when $p=3$ can be found in \autocite[p.~614]{kobayashi}. It generalizes for any $p>2$ without significant modifications.
\end{proof}

\begin{cor}\label{cor:char_rootN_D}
If $a(\chi)$ is even and $\a\in\O_M$ is such that $p\nmid v_M(\a)$, then \[\epsilon(\chi,\psi_k,\diff x)\approx\chi\left(\frac{D_{\a,\chi}}{v_M(\a)}\right).\] 
\end{cor}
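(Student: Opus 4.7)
The plan is to combine the two immediately preceding results: Theorem~\ref{thm:eps_gauges}(1) handles the left-hand side, reducing the statement to showing that $\chi(\delta_\chi) \approx \chi\bigl(D_{\a,\chi}/v_M(\a)\bigr)$, while Proposition~\ref{prop:kobayashi_D_delta} provides precisely the congruence relating $D_{\a,\chi}$ and $\delta_\chi$ modulo $U_K^1$. So the corollary should follow by a short chase, with no genuine computation to perform.

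More concretely, I would argue as follows. Since $a(\chi)$ is even, Theorem~\ref{thm:eps_gauges}(1) gives $\epsilon(\chi,\psi_k,\diff x) \approx \chi(\delta_\chi)$. By Proposition~\ref{prop:kobayashi_D_delta}, the ratio $D_{\a,\chi}/(v_M(\a)\delta_\chi)$ lies in $U_K^1$; here I use that $p \nmid v_M(\a)$ implies $v_M(\a)$ is a unit in $\Z_p$, so in particular a unit in $\O_K$, and the division makes sense in $K^\times$. Hence
\[
\chi\!\left(\frac{D_{\a,\chi}}{v_M(\a)}\right) = \chi(\delta_\chi)\cdot \chi\!\left(\frac{D_{\a,\chi}}{v_M(\a)\delta_\chi}\right),
\]
and it remains to check that the second factor is $\approx 1$.

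The final step is routine: $U_K^1$ is pro-$p$, $\chi$ is continuous with finite inertia image, so $\chi(U_K^1)$ is a finite $p$-group, i.e.\ a group of $p$-power roots of unity in $\C^\times$. By the very definition of $\approx$ given in \ref{subs:root_n_char}, any such value is $\approx 1$. Combining the three displayed relations yields the claim. I do not anticipate any real obstacle: the only subtlety to state cleanly is the implicit identification of the integer $v_M(\a)$ with its image in $K^\times$, and the observation that $p$-power roots of unity are absorbed by $\approx$, both of which have already been used in the proof of Theorem~\ref{thm:eps_gauges}.
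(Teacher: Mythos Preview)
Your proposal is correct and follows exactly the approach of the paper, which simply states that the corollary follows from Thm.~\ref{thm:eps_gauges}(1) and Prop.~\ref{prop:kobayashi_D_delta}. You have merely spelled out the short chase (that $\chi(U_K^1)$ consists of $p$-power roots of unity and hence is absorbed by $\approx$) which the paper leaves implicit.
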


\begin{proof}
Follows from Thm.~\ref{thm:eps_gauges}.(1) and Prop.~\ref{prop:kobayashi_D_delta}.
\end{proof}

\section{Curves of genus $2$ and wild ramification}\label{sect:wild_jac2}

\subsection{Generalities}\label{subs:curve_g2} Let $K$ be a $p$-adic local field with $p\neq2$, and let $C/K$ be a smooth, projective, and geometrically connected curve of genus 2 defined over $K$. The curve $C/K$ is hyperelliptic (see \autocite[7.~Prop.~4.9]{liu_book}), and there exists an open affine subvariety $C_{\aff}$ of $C$ which is defined by a single Weierstrass equation
\begin{equation}\label{eq:hyperell_eq}
Y^2=P(X),
\end{equation}
where $P\in K[X]$ has simple roots and has degree $5$ or $6$.

%The differentials $\w_0 =\frac{\diff X}{2Y}$, $\w_1 = \frac{X\diff X}{2Y}$ $\in H^0(C_{\aff},\Omega^1_{C/K})$ extend to $C$ and define a $K$-basis of $H^0(C, \Omega^1_{C/K})$.

\subsection{Discriminants} Following \autocite[\S2]{liu_models} we define the \textit{discriminant} of an equation \eqref{eq:hyperell_eq} in terms of the discriminant of the polynomial $P$: let $a_0$ be the leading coefficient of $4P$, then
\begin{equation}\label{eq:disc_def}
\Delta(P):=\begin{cases} 2^{-12}\disc(4P) & \text{if }\deg P=6,\\ 2^{-12}a_0^2\disc(4P)& \text{if } \deg P=5.\end{cases}
\end{equation}
In particular, if $P(X)=(X-\a_1)(X-\a_2)(X-\a_3)(X-\a_4)(X-\a_5)$, then 
\begin{equation}\label{eq:disc_unitary}
\Delta(P)=2^8\prod_{1\leq i<j\leq 5}(\a_i-\a_j)^2.
\end{equation}
We note that $\Delta(P)\neq0$ since $C$ is non-singular.

\subsection{Change of variables}\label{subs:g2_var_change} The equation \eqref{eq:hyperell_eq} is unique up to a transformation
\begin{equation}\label{eq:hyperell_change_var}
X'=\frac{aX+b}{cX+d}, \hspace{5mm} Y'=\frac{eY}{(cX+d)^3}\end{equation}
where
$\begin{pmatrix}a&b \\ c&d \end{pmatrix}\in\GL_2(K)$ and $e\in K^\times.$ If $Y'^2=P'(X')$ is obtained from \eqref{eq:hyperell_eq} via \eqref{eq:hyperell_change_var}, then %the new differentials $\w_0'$ and $\w_1'$ satisfy \[\begin{pmatrix}\w'_0 \\ \w'_1\end{pmatrix} = e^{-1}\begin{vmatrix}a&b\\c&d\end{vmatrix}\begin{pmatrix}d& c\\b&a\end{pmatrix}\begin{pmatrix}\w_0\\ \w_1\end{pmatrix},\] and 
the new discriminant is 
\begin{equation}\label{eq:var_change_disc}\Delta(P')=e^{20}\begin{vmatrix}a&b\\c&d\end{vmatrix}^{-30}\! \Delta(P) \, .\end{equation} As an immediate consequence, the class of a discriminant in $K^\times/(K^\times)^2$ does not depend on the choice of a Weierstrass equation.

\subsection{Root numbers and semi-stable reduction} Let $C/K$ be as in \ref{subs:curve_g2} and let $J(C)/K$ be its Jacobian. We denote by $\Jc(C/K)_k^\circ$ the neutral component of the special fiber of the Néron model of $J(C)/K$. Let $\ell\neq p$ be a prime number. We have an isomorphism of $\ell$-adic $\Ga_K$-representations \[H^1_{\et}(C_{\overline{K}},\Q_\ell)\cong H^1_{\et}(J(C)_{\overline{K}},\Q_\ell),\] we denote either of them by $\rho_\ell$. The root number $w(C/K):=w(\rho_\ell)$ is defined via the the complex Weil--Deligne representation associated to $\rho_\ell$ (see, e.g., \autocite{rohrlich}). Let $L/K$ be a finite extension over which $C$ has stable reduction. Then $J(C_L)/L$ has semi-stable reduction, i.e. that $\Jc(C_L/L)_k^\circ$ is an extension of an abelian variety by a torus. Using Sabitova's decomposition \autocite[Prop.~1.10]{sabitova_root} we can separate the contributions to $w(C/K)$ coming from the abelian and the toric parts of $\Jc(C_L/L)_{k_L}^\circ$. 

\subsection{Hypotheses} We suppose from now on that $J(C_L)/L$ has good reduction or, equivalently, that $J(C)/K$ has potentially good reduction. It follows from the semi-stable reduction theorems that this happens exactly when $\rho_\ell(I_K)$ is finite. In this case we write $|\rho_\ell(I_K)|=ep^r$ with $e$ coprime to $p$. We further suppose that $r\geq1$, i.e. $\rho_\ell$ is wildly ramified. Due to Serre--Tate \autocite[p.~497,~Cor.~2]{serre_tate}, necessarily, $p\leq5$. We will later suppose that $p=5$.

\subsection{Inertially minimal extensions}\label{subs:im_ext} It follows from the Néron--Ogg--Shafarevich criterion that $J(C)$ attains good reduction  over $L':=\overline{K}^{\ker \rho_\ell|_{I_K}}$ and that $L'/K^{\unr}$ is the minimal such extension. We call an algebraic extension $L/K$ \textit{inertially minimal (IM) for $J(C)/K$} if $I_L=I_{L'}= \ker \rho_\ell|_{I_K}$. In other words, $L/K$ is IM if and only if $J(C)$ has good reduction over $L$ and has bad reduction over every proper subextension of $K^{\unr}L/K^{\unr}$. 

\subsection{Good reduction and torsion}\label{subs:red_torsion} For $m\geq1$ we denote by $J(C)[m]$ the subgroup of $m$-torsion points of $J(C)(\overline{K})$ and by $K(J(C)[m])$ the smallest extension of $K$ over which all the points of $J(C)[m]$ are rational. For $m\geq 3$ coprime to $p$, it follows from Serre--Tate \autocite[Cor.~3, p.~498]{serre_tate} that the extension $K(J(C)[m])/K$ is IM for $J(C)/K$. Similarly, for $p\neq 2$, Serre~\autocite{serre_2torsion} shows that $|\rho_\ell(I_{K(J(C)[2])})|\leq2$. Thus, if $K(J(C)[2])/K$ is not an IM extension, then there is a totally ramified quadratic extension $L/K(J(C)[2])$ such that $L/K$ is IM for $J(C)/K$. Therefore, if $p\neq 2$, then the groups $\rho_\ell(I^{\wild}_K)$ and $I^{\wild}(K(J(C)[2])/K)$ are isomorphic. 

\subsection{We suppose for the rest of the section that $p=5$.}\label{subs:inertia_class} 
The groups $\rho_\ell(I_K)$ associated to abelian surfaces have been classified by Silverberg--Zarhin \autocite[Thm.~1.7]{silverberg_zarhin}. In our case $\rho_\ell(I_K)$ is one of the four groups (in the notation of \autocite{group_names}) satisfying the inclusions $C_5\subset C_{10}\subset \Dic_5\subset C_5\rtimes C_8$. More precisely, the group $\rho_\ell(I_K)$ has the form $C_5\rtimes C_{2^i}$ where $C_{2^i}$ is a subgroup of $C_8$ acting on $C_5$ with kernel $C_2\cap C_{2^i}\subset C_8$. 

Recall the Frobenius group $F_5=C_5\rtimes C_4$ where $C_4$ acts faithfully on $C_5$. In particular, $F_5$ and $\Dic_5$ are not isomorphic.

\begin{prop}\label{prop:wild5_good_equiv}
Suppose that $\rho_\ell$ is wildly ramified and let $L/K$ be a finite extension. If $J(C)$ has semi-stable reduction over $L$, then $C$ has good reduction over $L$, i.e. the minimal regular model $\Cc'/\O_L$ is smooth.
\end{prop}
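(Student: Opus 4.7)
The plan is to use the classification of stable reductions of a genus-two curve with trivial toric part, and to exclude the nontrivial reduction type via the Serre--Tate tameness bound applied to its elliptic factors.

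First, the semi-stable reduction of $J(C)$ over $L$ is in fact good reduction. By \ref{subs:inertia_class} and the preceding hypotheses, $J(C)/K$ has potentially good reduction, so $\rho_\ell(I_K)$ and hence $\rho_\ell(I_L)$ are finite. Semi-stability of $J(C_L)/L$ means that $I_L$ acts unipotently on $\rho_\ell$, and a finite unipotent subgroup of $\GL(\rho_\ell)$ in characteristic zero is trivial; hence $\rho_\ell(I_L) = 1$ and $J(C_L)$ has good reduction over $L$ by N\'eron--Ogg--Shafarevich.

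Second, by the Deligne--Mumford equivalence between stable reduction of curves and semi-stable reduction of their Jacobians, $C_L$ acquires a stable model $\Cc/\O_L$. The toric rank of $\Jc(C_L/L)_{k_L}^\circ$ equals the first Betti number of the dual graph of $\Cc_{k_L}$. Good reduction of $J(C_L)$ forces this rank to vanish, so the dual graph is a tree. For arithmetic genus $2$, only two possibilities remain: (a) $\Cc_{k_L}$ is smooth of genus $2$, in which case $\Cc$ is the smooth minimal regular model and $C_L$ has good reduction; or (b) $\Cc_{k_L}$ is the union of two smooth elliptic curves $E_1, E_2$ meeting transversally at one point.

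Third, I rule out case (b). There $\Jc(C_L/L)_{k_L}^\circ = E_1 \times E_2$, and from the normalization short exact sequence one obtains a canonical decomposition
\[\rho_\ell \cong H^1_{\et}((E_1)_{\overline{k}_L}, \Q_\ell) \oplus H^1_{\et}((E_2)_{\overline{k}_L}, \Q_\ell)\]
as representations of the open subgroup $H \subseteq \Ga_K$ preserving each component. Since $\Ga_K / H$ injects into the permutation group of $\{E_1, E_2\}$ and therefore has order at most $2$, the pro-$5$ group $I_K^{\wild}$ lies inside $H$ and coincides with the wild inertia of $\overline{K}^H$. Each summand is the Tate module of an elliptic curve over $\overline{K}^H$, namely a characteristic-zero lift of $E_i$ extracted from the abelian-scheme structure of the N\'eron model of $J(C_L)$. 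Since $p = 5 > 2 \cdot 1 + 1$, Serre--Tate \autocite[p.~497,~Cor.~2]{serre_tate} applied to each lift forces the wild inertia of $\overline{K}^H$ to act trivially on each summand, so $\rho_\ell(I_K^{\wild}) = 1$, contradicting the wild ramification hypothesis.

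The main obstacle is the descent step in the third paragraph: one must identify each two-dimensional summand with the Galois representation of a genuine elliptic curve over a characteristic-zero field, rather than merely with the cohomology of an elliptic curve over the residue field, before the Serre--Tate bound can legitimately be invoked.
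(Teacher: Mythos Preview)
Your overall strategy---reduce to the stable model, observe that the dual graph is a tree, and rule out the configuration $E_1\cup_{\text{pt}} E_2$---is exactly the route the paper takes. The difference lies in how case~(b) is excluded, and there your argument has a genuine gap, the one you yourself flag at the end.

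The problem is that the lift you invoke need not exist. The N\'eron model $\mathcal{J}/\O_L$ is an abelian scheme whose special fiber happens to split as $E_1\times E_2$, but its generic fiber $J(C_L)$ can perfectly well be simple (and typically is, under the running hypothesis that $\rho_\ell$ is irreducible, cf.\ Prop.~\ref{prop:40_induced}). There is no mechanism to ``extract'' an elliptic curve over $\overline{K}^{\,H}$ whose Tate module realizes a single summand, so the Serre--Tate bound for elliptic curves cannot be applied. Put differently: the decomposition $\rho_\ell|_H\cong H^1((E_1)_{\overline{k}_L})\oplus H^1((E_2)_{\overline{k}_L})$ is a statement about $H$-modules coming from the special fiber, not about abelian varieties in characteristic~$0$.

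The paper's fix, via Liu \autocite[Cor.~4.1.(4)]{liu_stable_red}, is to stay on the special fiber and use automorphisms of the \emph{stable curve} rather than of its Jacobian. In case~(b) any automorphism of $\Cc_{\overline{k}_L}$ either swaps the two components (order~$2$) or fixes the node and acts on each $(E_i,\text{node})$ as an automorphism of a pointed elliptic curve; in characteristic~$5$ such automorphism groups have order $2$, $4$, or~$6$. Hence $|\Aut(\Cc_{\overline{k}_L})|$ is prime to~$5$, so the pro-$5$ group $I_K^{\wild}$ acts trivially on $\Cc_{\overline{k}_L}$ and therefore on $\rho_\ell$, contradicting wild ramification. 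You can repair your proof by replacing the lifting step with this observation.
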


\begin{proof}
From classical theorems we know that $C$ has semi-stable reduction over $L$, and that there exists a stable (flat) model $\Cc'_{\can}/\O_L$ of $C_L/C$. The ring $R:=\O_{K^{\unr}L}$ is strictly Henselian and $(\Cc' _{\can})_R$ is a stable model of $C_{K^{\unr}L}/K^{\unr}L$. Wild ramification of $\rho_\ell$ implies that $5$ divides $[K^{\unr}L:K^{\unr}]$. By studying the possible orders of automorphisms of stable curves Liu \autocite[Cor.~4.1.(4)]{liu_stable_red} showed that $(\Cc'_{\can})_{\overline{k}_L}/{\overline{k}_L}$ must be smooth. It follows that $(\Cc' _{\can})_R/R$ and hence $\Cc' _{\can}/\O_L$ are smooth. We may use \autocite[10.~Prop.~1.21]{liu_book} to conclude that $\Cc'/\O_L$ is smooth.
\end{proof}

\begin{rem}\label{rem:good_r_curve_jac} 
The hypotheses that $\rho_\ell$ is wildly ramified and that $K$ is $5$-adic are essential. The curve $C_L/L$ might have bad reduction even if $J(C)$ has good reduction over $L$. On the other hand, \autocite[Example~8, p.~246]{BLRNeron} shows that the non-rational irreducible components of $\Cc'_{\overline{k}_L}$ correspond to nontrivial abelian varieties as quotients of $\Jc(C_L/L)^\circ_{\overline{k}_L}$. Using this it can be shown in general that if $\Jc(C_L/L)^\circ_{\overline{k}_L}$ is a simple abelian variety, then $C_L/L$ has good reduction. 
\end{rem}

\subsection{An explicit IM extension}\label{subs:A_5_J_10} Let $Y^2=P(X)$ be a hyperelliptic equation defining $C/K$. Generalizing the results of Kraus~\autocite{kraus}, Liu \autocite[\S5.1]{liu_algo} provides an explicit description in terms of invariants of $P$ of the tame part of the minimal extension $L'/K^{\unr}$ over which $C$ has stable reduction. By Prop.~\ref{prop:wild5_good_equiv}, this extension is precisely the IM extension for $J(C)/K$ defined in~\ref{subs:im_ext}. Liu~\autocite[\S2.1]{liu_algo} defines a so-called affine invariant $A_5$. After Prop.~\ref{prop:hyperell_eq_spec} we will always have $A_5=1$. We fix an $8$th root of \[\beta:=-A_5^{-6}\Delta(P)\] in $\overline{K}$, which we denote by $\beta^{1/8}$, and let $\beta^{1/4}:=(\beta^{1/8})^2$. Let $L'_{\tame}/K^{\unr}$ be the maximal tamely ramified subextension of $L'/K^{\unr}$, then $L'/L'_{\tame}$ is totally wildly ramified of degree $5$. Liu proves that 
\begin{equation}\label{eq:liu_tame} L'_{\tame}=K^{\unr}(\beta^{1/8}).\end{equation} Let $\nu:=v_K(\beta)$, $M:=K(J(C)[2])$, $N:=K\left(\beta^{1/8}\right)$, $H:=K\left(\beta^{1/4}\right)$, and $L:=MN$. We fix a primitive $8$th root of unity $\zeta_8\in\overline{K}$.

Let us recall from \autocite[3.39,~Cor.~2.11]{mumford_tata2} that $M$ is the splitting field of $P$. The extension $M/K$ is finite Galois. The extension $L/K$ is finite but not necessarily Galois.

\begin{prop}\label{prop:explicit_IM}
The extension $L/K$ is IM for $J(C)/K$ or, equivalently, $L'=K^{\unr}L$. The extension $L(\zeta_8)/K$ is Galois. In particular, if the residual degree $f(K/\Q_5)$ is even, then $L/K$ is Galois. The extension $L/H$ is always Galois.
\end{prop}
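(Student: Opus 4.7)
The first assertion, $L' = K^{\unr} L$, is where the real content lies; the plan is to prove the two inclusions separately and then deduce (2)–(4) formally from Galois-theoretic manipulations. For $K^{\unr}L \subseteq L'$, Liu's identification \eqref{eq:liu_tame} already gives $K^{\unr} N = L'_{\tame} \subseteq L'$, so the task reduces to showing $M \subseteq L'$. I plan to invoke the criterion of Néron--Ogg--Shafarevich at $\ell=2$ (permitted since $p=5\neq 2$): $J(C)$ has good reduction over $L'$, hence $I_{L'}$ acts trivially on $J(C)[2]$, which means $M \subseteq \overline{K}^{I_{L'}} = L'^{\unr}$. Because $K^{\unr}\subseteq L'$ already forces the residue field of $L'$ to equal $\overline{k}_K$, no proper unramified extension of $L'$ exists, so $L'^{\unr}=L'$ and therefore $M\subseteq L'$.

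For the reverse inclusion $L' \subseteq K^{\unr} L$, I will compare the tame/wild tower structure. Since $K^{\unr}L \supseteq K^{\unr}N = L'_{\tame}$, the extension $L'/K^{\unr}L$ embeds into $L'/L'_{\tame}$, which is totally wildly ramified of degree $5$, so $[L':K^{\unr}L]\in\{1,5\}$. Ruling out $5$ amounts to exhibiting \emph{some} wild ramification of $K^{\unr}M/K^{\unr}$: by \ref{subs:red_torsion} (using $p=5\neq 2$) the wild inertia $\rho_\ell(I_K^{\wild})$ is isomorphic to $I^{\wild}(M/K)$, and by \ref{subs:inertia_class} combined with the standing hypothesis that $\rho_\ell$ is wildly ramified this group is the nontrivial $5$-group $C_5$. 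Hence $K^{\unr}M/K^{\unr}$ contains a wild subextension, which cannot lie in the tame field $L'_{\tame}$, forcing $K^{\unr}L=L'$. The main obstacle is merely the bookkeeping in this tame/wild dichotomy; all key inputs are already in hand.

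For (2), observe that $L(\zeta_8) = M \cdot K(\beta^{1/8},\zeta_8)$, since $L=MN$ and $N=K(\beta^{1/8})$. The field $K(\beta^{1/8},\zeta_8)$ is the splitting field of $X^8-\beta$ over $K$ (its roots being $\zeta_8^i\beta^{1/8}$), hence Galois over $K$. Since $M/K$ is Galois (splitting field of $P$, cf.\ \autocite[3.39,~Cor.~2.11]{mumford_tata2}), the compositum $L(\zeta_8)/K$ is Galois. For (3), assuming $f(K/\Q_5)$ is even, then $8 \mid 5^f - 1 = q_K - 1$, so $k_K^\times$ contains a primitive $8$th root of unity; by Hensel's lemma this lifts uniquely to $\zeta_8\in K$. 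Therefore $L(\zeta_8)=L$, and Galoisness follows from (2).

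Finally, for (4), I will write $L = MH \cdot H(\beta^{1/8})$ after noting that $H=K(\beta^{1/4}) \subseteq N \subseteq L$. The extension $MH/H$ is Galois as the base change of the Galois extension $M/K$. The extension $H(\beta^{1/8})/H$ has degree at most $2$, being generated by a square root of the element $\beta^{1/4}\in H$, and is therefore automatically Galois. The compositum of two Galois extensions of $H$ is Galois, which concludes the proof.
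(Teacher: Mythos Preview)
Your proof is correct and follows essentially the same approach as the paper. The only organizational difference is in the inclusion $L'\subseteq K^{\unr}L$: the paper argues directly that $\rho_\ell(I_L)$ is trivial (being contained in both $\rho_\ell(I_M)$ of order $\leq 2$ and $\rho_\ell(I_{L'_{\tame}})$ of order $5$), whereas you first establish $K^{\unr}L\subseteq L'$ and then squeeze $K^{\unr}L$ between $L'_{\tame}$ and $L'$ using the wild ramification of $M/K$; both routes use the same inputs (Liu's tame identification and \ref{subs:red_torsion}) and are equally short.
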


\begin{proof}
From \ref{subs:red_torsion} we have $|\rho_\ell(I_M)|\leq 2$, so $\rho_\ell|_{\Ga_M}$ is at most tamely ramified. It now follows from \eqref{eq:liu_tame} that $J(C)$ has good reduction over $LK^{\unr}$, so $L'\subset LK^{\unr}$. On the other hand, we have $L'\supset L_{\tame}'= NK^{\unr}$, and $I_{L'}$ acts trivially on $J(C)[2]$ by the Néron--Ogg--Shafarevich criterion, so $L'\supset LK^{\unr}$. Therefore, $I_{L'}= I_L$.

The Galois closure of $N/K$ is $N(\zeta_8)/K$, so $L(\zeta_8)/K$ is Galois. Since $\Q_5$ contains the $4$th roots of unity and no primitive $8$th roots of unity, $K$ contains $\zeta_8$ if and only if $f(K/\Q_5)$ is even.

The extension $HM/K$ is Galois since $H/K$ and $M/K$ are such. Thus, $L/H$ is Galois as the compositum of $HM/H$ and $N/H$.
\end{proof}

\begin{prop}\label{prop:hyperell_irred_5}
The group $\Gal(M/K)$ is isomorphic to a subgroup of $F_5$ (as in \ref{subs:inertia_class}). As a consequence, the polynomial $P$ has an irreducible factor over $K$ of degree 5. 
\end{prop}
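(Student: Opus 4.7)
The plan is to realize $\Gal(M/K)$ inside a symmetric group via its action on the roots of $P$, and then use the structure of the wild inertia to confine it in $F_5$. Since $M$ is the splitting field of $P$ and $\deg P \in \{5,6\}$, the natural permutation action on the roots embeds $\Gal(M/K)$ into $S_d$ with $d = \deg P$. The key input is the identification $I^{\wild}(M/K) \cong C_5$: by the classification in~\ref{subs:inertia_class}, $\rho_\ell(I_K^{\wild})$ is the unique $5$-Sylow of $\rho_\ell(I_K)$, hence cyclic of order $5$; by~\ref{subs:red_torsion} (which applies since $p = 5 \neq 2$) the wild inertia of $\Gal(M/K)$ is isomorphic to $\rho_\ell(I_K^{\wild})$. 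Moreover, $I^{\wild}(M/K)$ is normal in $\Gal(M/K)$, since it is characteristic in the (normal) inertia subgroup.

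A generator $\sigma$ of $I^{\wild}(M/K)$ has order $5$ and so acts as a $5$-cycle. In the degree-$5$ case there is nothing further to do. In the degree-$6$ case $\sigma$ has a unique fixed root $\alpha_0$; for any $\tau \in \Gal(M/K)$, normality of $\langle \sigma \rangle$ forces $\tau\sigma\tau^{-1}$ to be a nontrivial power of $\sigma$, which still fixes only $\alpha_0$, while also fixing $\tau(\alpha_0)$. Hence $\tau(\alpha_0) = \alpha_0$, and so $\Gal(M/K)$ preserves $\alpha_0$ and embeds into the copy of $S_5$ permuting the five remaining roots. In either case, $\Gal(M/K)$ is a subgroup of $S_5$ containing a normal Sylow $5$-subgroup, hence lies in the normalizer of that subgroup, which is the Frobenius group $F_5$ of order $20$.

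The consequence on $P$ drops out: the cyclic group $\langle \sigma \rangle$ acts transitively on five roots of $P$, hence so does $\Gal(M/K)$; these five roots form a single Galois orbit, corresponding to an irreducible factor of $P$ over $K$ of degree $5$ (equal to $P$ itself when $\deg P = 5$, and to $P/(X-\alpha_0)$ when $\deg P = 6$, with $\alpha_0 \in K$). The only nonroutine point is the degree-$6$ case, where one must observe that normality of the $5$-cycle forces its fixed point to be Galois-stable; once $I^{\wild}(M/K) \cong C_5$ is pinned down, everything else is pure group theory.
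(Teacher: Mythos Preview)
Your proof is correct and follows the same approach as the paper: embed $\Gal(M/K)$ in $S_{\deg P}$, use that the wild inertia is a normal $C_5$, and conclude that the Galois group lies in the normalizer of a $5$-cycle, namely $F_5$. The only cosmetic difference is that in the degree-$6$ case you first reduce to $S_5$ by showing the unique fixed root of the $5$-cycle is Galois-stable, whereas the paper computes the normalizer of a $5$-cycle directly in both $S_5$ and $S_6$ via a Sylow count.
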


\begin{proof}
We recall that $\deg P=5$ or $6$, so we may view $\Gal(M/K)=\Gal(P)$ as a subgroup of $S_5$ or $S_6$, respectively. Since the wild inertia subgroup of $\Gal(P)$ is normal of order $5$, the group $\Gal(P)$ must be a subgroup of a normalizer subgroup $G$ of a $5$-cycle in $S_5$ or $S_6$. We naturally have $F_5\subseteq G$ and, in fact, an equality holds because for $n=5,6$ we have \[|G|=\frac{|S_n|}{\#\{5\text{-Sylow's in }S_n\}}=\frac{n!}{\frac{n!}{4\cdot5\cdot (n-5)!}}=20.\] 

If $P$ was irreducible over $K$ and had degree $6$, then $\Gal(P)$ would have a subgroup of index $6$, which is impossible. On the other hand, since $\Gal(P)$ contains a $5$-cycle, $P$ must have an irreducible factor of degree at least $5$.
\end{proof}

\begin{prop}\label{prop:inertia_disc_val} The group $\rho_\ell(I_K)$ is isomorphic to $C_5\rtimes C_8$, $\Dic_5$, $C_{10}$, or $C_5$ if and only if $\nu\equiv1\bmod 2$, $\nu\equiv2\bmod 4$, $\nu\equiv 4\bmod 8$, or $\nu\equiv0\bmod 8$, respectively. In particular, by denoting the ramification index of $L/K$ by $e(L/K)$ we have $40\mid e(L/K)\cdot\nu$. 
\end{prop}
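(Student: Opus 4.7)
The plan is to observe that $\rho_\ell(I_K)\cong\Gal(L'/K^{\unr})$ (since $\ker\rho_\ell|_{I_K}=I_{L'}$ and $L'\supseteq K^{\unr}$), reducing the statement to computing $[L':K^{\unr}]$ as a function of $\nu$, and then to invoke the classification from \ref{subs:inertia_class}, according to which the four possible inertia groups $C_5,\,C_{10},\,\Dic_5,\,C_5\rtimes C_8$ form a chain and are pairwise non-isomorphic, distinguished by their orders $5,\,10,\,20,\,40$.

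First I would decompose $[L':K^{\unr}]=5\cdot[L'_{\tame}:K^{\unr}]$, using that $L'/L'_{\tame}$ is totally wildly ramified of degree $5$ (as recalled in \ref{subs:A_5_J_10}). Since $L'_{\tame}=K^{\unr}(\beta^{1/8})$ by \eqref{eq:liu_tame}, the degree $[L'_{\tame}:K^{\unr}]$ is computed by Kummer theory: $K^{\unr}$ contains a primitive $8$th root of unity (as $\overline{k}_K$ has characteristic $5$ and thus contains an $8$th root of unity, which lifts by Hensel's lemma), and every unit of $\O_{K^{\unr}}$ is an $8$th power since $\overline{k}_K^\times$ is $8$-divisible. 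Therefore the order of the class of $\beta$ in $K^{\unr,\times}/K^{\unr,\times 8}$ depends only on $v_{K^{\unr}}(\beta)=\nu$ modulo $8$, giving $[K^{\unr}(\beta^{1/8}):K^{\unr}]=8/\gcd(8,\nu)$. Combining, $|\rho_\ell(I_K)|=40/\gcd(8,\nu)$.

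Running through the four residue classes, $\nu$ odd, $\nu\equiv 2\pmod 4$, $\nu\equiv 4\pmod 8$, and $\nu\equiv 0\pmod 8$, yields $\gcd(8,\nu)=1,2,4,8$ and hence inertia orders $40,20,10,5$ respectively; matching against the classification identifies the group in each case. Finally, since $L'=K^{\unr}L$ is totally ramified over $K^{\unr}$, we have $e(L/K)=[L':K^{\unr}]=40/\gcd(8,\nu)$, so $e(L/K)\cdot\nu=40\cdot\nu/\gcd(8,\nu)\in 40\Z$, proving the divisibility. The only delicate input is the Kummer-theoretic computation, which is routine given the divisibility properties of $\overline{k}_K^\times$; I do not foresee any serious obstacle.
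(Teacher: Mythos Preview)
Your proposal is correct and follows essentially the same approach as the paper's proof: both reduce the question to computing the tame degree $[L'_{\tame}:K^{\unr}]=[K^{\unr}(\beta^{1/8}):K^{\unr}]$ in terms of $\nu\bmod 8$ via \eqref{eq:liu_tame}, and then invoke the classification of \ref{subs:inertia_class}. Your write-up is simply more explicit about the Kummer-theoretic step (the $8$-divisibility of $\O_{K^{\unr}}^\times$) and about the identity $e(L/K)=[L':K^{\unr}]$, whereas the paper leaves these implicit.
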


\begin{proof}
By \eqref{eq:liu_tame} and Prop.~\ref{prop:explicit_IM}, the tame ramification index of $L/K$ is determined by the the residue $\nu\bmod 8$ and is exactly the maximal prime-to-$5$ divisor of $|\rho_\ell(I_K)|$. The group $\rho_\ell(I_K)$ can then be identified from the classification in \ref{subs:inertia_class}.
\end{proof}

\begin{prop}\label{prop:almost_abelian}
Let $\s\in I_K^{\wild}$ and let $\tau\in \Ga_K$ denote a lift of a topological generator of the tame inertia group $I_K^{\tame}$. Let $\varphi_L\in \Ga_L$ and $\varphi_{L(\zeta_8)}\in \Ga_{L(\zeta_8)}$ be lifts of the geometric Frobenii. Then: 
\begin{enumerate} 
\item The images $\rho_\ell(\s)$, $\rho_\ell(\tau^4)$, and $\rho_\ell(\varphi_L)$ commute;
\item The images $\rho_\ell(\tau)$ and $\rho_\ell(\varphi_{L(\zeta_8)})$ commute.
\end{enumerate}
In particular, $\rho_\ell(\varphi_{L(\zeta_8)})$ is central in $\rho_\ell(\Ga_K).$
\end{prop}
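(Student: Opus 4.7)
The centerpiece of the proof will be the following ``centrality modulo inertia'' fact: for any finite Galois subextension $E/K$ of $\overline{K}/K$ and any Frobenius lift $\varphi_E\in\Ga_E$, one has $g\varphi_E g^{-1}\equiv\varphi_E\pmod{I_E}$ for every $g\in\Ga_K$. To prove this, I will use that since $E/K$ is Galois both $\Ga_E$ and $I_E=\Ga_E\cap I_K$ are normal in $\Ga_K$. For $g\in I_K$ and $h\in\Ga_E$ the commutator $[g,h]$ lies in $I_K\cap\Ga_E=I_E$ (using normality of both $\Ga_E$ and $I_K$), so conjugation by $I_K$ on $\Ga_E/I_E$ is trivial. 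The induced action of the pro-cyclic quotient $\Ga_K/I_K$ on its subgroup $\Ga_E/I_E$ is then an action of an abelian group on itself by conjugation, hence also trivial.

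Applying this with $E=L(\zeta_8)$, which is Galois over $K$ by Prop.~\ref{prop:explicit_IM}, together with the inclusion $I_{L(\zeta_8)}\subset I_L=\ker(\rho_\ell|_{I_K})$ coming from the IM property of $L/K$ (see \ref{subs:im_ext}), yields $\rho_\ell(g)\rho_\ell(\varphi_{L(\zeta_8)})\rho_\ell(g)^{-1}=\rho_\ell(\varphi_{L(\zeta_8)})$ for every $g\in\Ga_K$. This simultaneously proves (2) (take $g=\tau$) and the final sentence about centrality.

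For (1) I will run the same lemma with $(K,E)$ replaced by $(H,L)$; Prop.~\ref{prop:explicit_IM} guarantees that $L/H$ is Galois. This gives $\rho_\ell(\varphi_L)\rho_\ell(g)=\rho_\ell(g)\rho_\ell(\varphi_L)$ for every $g\in\Ga_H$, so it remains to check that $\s$ and $\tau^4$ lie in $\Ga_H$. Since $\mu_4\subset\Q_5\subset K$, the Kummer extension $H=K(\beta^{1/4})/K$ is Galois of degree dividing $4$, hence tamely ramified. Thus $I_K^{\wild}\subset\Ga_H$ (so $\s\in\Ga_H$), and the inertia subgroup of $\Gal(H/K)$ being cyclic of order dividing $4$ gives $\tau^4\in\Ga_H$.

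The last thing to verify for (1) is that $\rho_\ell(\s)$ and $\rho_\ell(\tau^4)$ commute, which is an internal computation in $\rho_\ell(I_K)=C_5\rtimes C_{2^i}$ (notation of~\ref{subs:inertia_class}). Since $\tau$ topologically generates $I_K^{\tame}$, its image in $\rho_\ell(I_K)/C_5=C_{2^i}$ is a generator, so the image of $\tau^4$ has order at most $2$ in $C_{2^i}$ and therefore lies in $C_2\cap C_{2^i}$, which is precisely the kernel of the action of $C_{2^i}$ on $C_5$ defining the semidirect product. Hence conjugation by $\rho_\ell(\tau^4)$ fixes $\rho_\ell(\s)\in C_5$, completing (1). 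The only delicate step is the centrality-mod-inertia lemma; the only external input is the Galois-theoretic content of Prop.~\ref{prop:explicit_IM} (needed to ensure that the lemma can actually be invoked, since $L/K$ itself need not be Galois).
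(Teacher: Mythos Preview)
Your proof is correct. For part (2) and the centrality claim it is essentially the paper's argument: both trap the commutator $[g,\varphi_{L(\zeta_8)}]$ in $I_K\cap\Ga_{L(\zeta_8)}=I_{L(\zeta_8)}\subset\ker\rho_\ell$ using that $L(\zeta_8)/K$ is Galois. Your abstract ``centrality modulo inertia'' lemma in fact makes the final centrality statement cleaner than the paper, which literally only writes out the case $g\in I_K$.

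For part (1) you take a genuinely different route. The paper introduces the auxiliary field $F$, the fixed field of the $5$-Sylow of $\Gal(M/K)$, proves that $L'/F$ is abelian (by writing $L/F$ as a compositum of the $C_5$-extension $M/F$ with the tame part $L_{\tame}/F$, which has ramification index at most $2$), and then observes that the classes $\s I_L$, $\tau^4 I_L$, $\varphi_L I_L$ all lie in this one abelian group $\Gal(L'/F)$. You instead work over $H$ and split the task in two: your lemma applied to $L/H$ shows $\rho_\ell(\varphi_L)$ commutes with everything coming from $\Ga_H$ (in particular with $\rho_\ell(\s)$ and $\rho_\ell(\tau^4)$), and a separate computation in $\rho_\ell(I_K)\simeq C_5\rtimes C_{2^i}$ handles the remaining commutator $[\rho_\ell(\s),\rho_\ell(\tau^4)]$, using that the $C_{2^i}$-component of $\rho_\ell(\tau^4)$ lies in the kernel $C_2\cap C_{2^i}$ of the action on $C_5$ (whence conjugation by $\rho_\ell(\tau^4)$ fixes $C_5$ pointwise, regardless of its $C_5$-component). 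The paper's approach dispatches all three commutations at once by finding a common abelian ambient group; yours is more modular and isolates a reusable lemma, at the cost of needing the explicit description of $\rho_\ell(I_K)$ from \ref{subs:inertia_class} for the last step.
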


\begin{proof}
Prop.~\ref{prop:explicit_IM} shows that $\rho_\ell|_{I_L}$ is trivial and that $L'=LK^{\unr}$. Thus, for (1) we only need to show that the classes $\s I_L$, $\tau^4 I_L$, and $\varphi_L I_L $ in $\Gal(L'/K)=\Ga_K/I_L$ commute. From \ref{subs:inertia_class} we have $\s^5\in I_L$ and $\tau^8\in I_L$. We note that the subfield of $L'$ fixed by $\varphi_LI_L$ is $L$.

%The element $\s[I_L]$ generates a normal subgroup of $\Gal(L'/K^{\unr})$ of order dividing 5, so its automorphisms have orders dividing 4. It follows that $\tau^4[I_L]$ induces the trivial inner automorphism.

Let $F/K$ be the subextension of $M/K$ fixed by the unique $5$-Sylow subgroup of $\Gal(M/K)$. We claim that $L/F$ is abelian. We observe that $L/F$ is the compositum of the $C_5$-extension $M/F$ and the maximal at most tamely ramified subextension $L_{\tame}/F$ of $L/F$. By \ref{subs:red_torsion}, the ramification index of $L_{\tame}/F$ is at most $2$, so $L_{\tame}/F$ must be abelian. It follows that $L/F$ is abelian. The extension $L'/F$ is abelian as the compositum of $L/F$ and $K^{\unr}F/F$. 

We observe that the closure of the subgroup generated by $\tau^4I_K^{\wild}$ in $I_K$ cuts out the unique extension of $K^{\unr}$ of degree 4, which contains $F$ (we have $[F:K]\mid 4$ by Prop.~\ref{prop:hyperell_irred_5}). Thus, the class $\tau^4I_L$ is in $\Gal(L'/F)$. On the other hand, $\s I_L$ and $\varphi_L I_L$ are also in $\Gal(L'/F)$, so they all commute. 

For (2) we first note that, for $\gamma\in I_K$, we have $\eta:=\gamma\varphi_{L(\zeta_8)}\gamma^{-1}\varphi_{L(\zeta_8)}^{-1}\in I_K$. Since $L(\zeta_8)/K$ is Galois by Prop.~\ref{prop:explicit_IM}, we have $\gamma\varphi_{L(\zeta_8)}\gamma^{-1}\in\Ga_{L(\zeta_8)}$, and thus $\eta\in \Ga_{L(\zeta_8)}\cap I_K=I_{L}$. Thus, $\rho_\ell(\eta)$ is trivial, and hence (2) holds.
\end{proof}

\subsection{Particular form of hyperelliptic equation}\label{subs:hyperell_eq_spec} If $\deg P=6$, then Prop.~\ref{prop:hyperell_irred_5} shows that $P$ has a root in $K$. By applying a change of variables \eqref{eq:hyperell_change_var} that sends this root to the point at infinity, we may assume that the curve $C/K$ is defined by a Weierstrass equation $Y^2=P(X)$ with $P$ unitary and irreducible of degree $5$. Applying Liu's results \autocite[Prop.~5.1]{liu_algo} gives the following
\begin{prop}\label{prop:hyperell_eq_spec}
There exists an equation \[Y^2=X^5+a_2X^4+\ldots +a_6,\] which defines $C/K$ with $a_2,\ldots,a_6\in\O_K$ such that $v_K(a_6)\in\{1,2,3,4,6,7,8,9\}$. %The integer $v_K(a_6)$ determines the Namikawa--Ueno type of $C/K$, see Table~\ref{table:namikawa_ueno}. 
With respect to this equation, we have $A_5=1$.
\end{prop}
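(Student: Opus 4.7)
My plan is to build on subsection~\ref{subs:hyperell_eq_spec}, which already provides an equation $Y^2 = P(X)$ with $P \in K[X]$ unitary, irreducible, of degree~$5$, and then to invoke Liu's normalization result \autocite[Prop.~5.1]{liu_algo}.

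First I identify the allowable substitutions. Those preserving the shape $Y^2 = X^5 + a_2 X^4 + \cdots + a_6$ are the $K$-rational changes of variables $X \mapsto u^2 X + b$, $Y \mapsto u^5 Y$ with $u \in K^\times$ and $b \in K$; these correspond to taking $c=0$ in \eqref{eq:hyperell_change_var} and enforcing $e^2 = a^5$ to retain unitarity. Under such a substitution with $u = \varpi_K^j$ and $b=0$, a direct computation gives $v_K(a_i) \mapsto v_K(a_i) - 2(i-1)j$, so in particular $v_K(a_6) \bmod 10$ is invariant. The only way to change $v_K(a_6)$ modulo $10$ is through a translation $X \mapsto X+b$, under which $a_6$ is replaced by $P(b)$.

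The next step is to apply Liu's algorithm: choose $u$ and $b$ so as to simultaneously make all $a_i$ integral and push $v_K(a_6)$ into $[1,9]\setminus\{5\}$. The main obstacle is the exclusion of the residues $v_K(a_6) \equiv 0, 5 \pmod{10}$; this is where the irreducibility of $P$ together with its Newton polygon (whose unique slope is $-v_K(a_6)/5$) and the constraint $\Gal(M/K) \subset F_5$ of Prop.~\ref{prop:hyperell_irred_5} are decisive. Rather than reproving this step, I cite Liu's Proposition~5.1 directly. Finally, the equality $A_5 = 1$ is immediate from the definition of the affine invariant in \autocite[\S2.1]{liu_algo}, which evaluates to $1$ on any Weierstrass equation whose defining polynomial is unitary of degree~$5$.
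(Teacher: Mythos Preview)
Your proposal is correct and takes essentially the same approach as the paper: the paper gives no proof beyond the sentence preceding the proposition, ``Applying Liu's results \autocite[Prop.~5.1]{liu_algo} gives the following,'' so both you and the paper rely on citing Liu directly after the reduction to a unitary irreducible degree-$5$ polynomial carried out in \ref{subs:hyperell_eq_spec}. Your additional discussion of the shape-preserving substitutions and the invariance of $v_K(a_6)\bmod 10$ is correct but goes beyond what the paper records, since that normalization is already subsumed in Liu's Proposition~5.1.
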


\begin{rem}
Prop.~\ref{prop:hyperell_eq_spec} is an analogue of Tate's algorithm for elliptic curves. Indeed, Liu also proved that the integer $v_K(a_6)$ determines the geometric type of the minimal regular model of $C/K$. These correspond to the Namikawa--Ueno \autocite{namikawa_ueno} types [VIII-i] and [IX-i] for $i=1,2,3,4$.
\end{rem}

\section{Galois action on the special fiber}\label{sect:spec_f} 

Let $k$ be a finite field of some characteristic $p>2$. We denote by $k(y)$ the field of rational functions in one variable over $k$.

\subsection{Artin--Schreier curves}\label{subs:artin_schreier_curves} We briefly recall some basic Artin--Schreier theory. If $f\in k(y)$ is not in the image of the map $g\mapsto g^p-g$, then the equation $x^p-x=f$ defines a smooth projective curve $C_f$ over $k$ together with a finite morphism $\pi:C_f\to\Pro^1_{k}$ of degree $p$. In other words, the function field $k(C_f)=k(x,y)$ is a cyclic extension of $k(y)$ of degree $p$. Inversely, every cyclic extension of $k(y)$ of order $p$ is of this form. We may assume that $f$ is in \textit{standard form}, i.e. each pole of $f$ is of order prime to $p$, this is well-known, see \autocite[\S2]{hesse_AS}. Then, $f$ has poles at exactly the branch points of $\pi$. In particular, if $\pi$ has a unique branch point in $\Pro_k^1(k)$ which is the pole of $y$, then $f$ is a polynomial in $y$. If this is the case, then the genus of $C_f$ is given by $g(C_f)=\frac{(\deg f-1)(p-1)}{2}$, see, e.g., \autocite[3.7.8.(d)]{stichtenoth_ff}.

For every $a\in k^{\times}$ and $c\in k$ we denote by $C_{a,c}$ the Artin--Schreier curve given by the equation $x^p-x-c=ay^2$.

\begin{lem}\label{lem:traces_C10}
Let $p\equiv1\bmod 4$. On the curve $C_{1,0}/\F_p$ we have the automorphisms $\s_1:(x,y)\mapsto(x+1,y)$, $\iota:(x,y)\mapsto(x,-y)$, and the endomorphism $F:(x,y)\mapsto(x^p,y^p)$. They commute pairwise and, for all $n,r,f\in\Z$, the trace of the pullback $(\iota^n\circ\s_1^r\circ F^f)^*$ on $H^1_ {\et}((C_{1,0})_{\overline{\F}_p},\Q_\ell)$ is given by \[\Tr (\iota^n\circ\s_1^r\circ F^f)^*= \begin{cases} (-1)^{n+1}p^{f/2} & \text{ if $f$ is even and $p\nmid r$}, \\ (-1)^np^{f/2}(p-1)&\text{ if $f$ is even and $p\mid r$}, \\ (-1)^{n+1}\lege{r}{\F_ p}p^{\frac{f+1}{2}}& \text{ if $f$ is odd.}\end{cases}\]
\end{lem}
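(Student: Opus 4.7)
The plan is to diagonalize $H^1$ under $\s_1^*$ and determine how $\iota^*$ and $F^*$ interact with the decomposition. A direct check on coordinates shows that $\s_1$, $\iota$, $F$ pairwise commute as morphisms of $C_{1,0}$, so their pullbacks commute on $H^1$. Since $C_{1,0}$ is hyperelliptic with hyperelliptic involution $\iota$, it is classical that $\iota^*$ acts as $-\id$ on $H^1_\et((C_{1,0})_{\overline{\F}_p},\Q_\ell)$; consequently
\[\Tr(\iota^n\circ\s_1^r\circ F^f)^*=(-1)^n\,\Tr(\s_1^r\circ F^f)^*,\]
reducing the problem to $n=0$.

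The quotient map $C_{1,0}\to C_{1,0}/\<\s_1\>\cong\Pro^1$ (with coordinate $y$) is an Artin--Schreier cover in standard form with defining polynomial $y^2$ of degree $2$ coprime to $p$, so the genus of $C_{1,0}$ is $(p-1)/2$ and, by Artin--Schreier theory, every nontrivial character of $\<\s_1\>$ appears in $H^1$ with multiplicity exactly $1$. After enlarging the coefficient ring to contain a primitive $p$-th root of unity $\zeta$, one obtains
\[H^1_\et\bigl((C_{1,0})_{\overline{\F}_p},\Q_\ell(\zeta)\bigr) = \bigoplus_{a=1}^{p-1} V_a,\qquad \dim V_a = 1,\qquad \s_1^*|_{V_a} = \zeta^a.\]
Since $F^*$ commutes with $\s_1^*$, it preserves each $V_a$ and acts there by a scalar $\lambda_a$, with $|\lambda_a|=\sqrt p$ by the Weil conjectures.

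The main step is to identify $\lambda_a = -\lege{a}{\F_p}\sqrt p$, which I would carry out via two point counts. First, $\#C_{1,0}(\F_p)=p+1$: for $x\in\F_p$ one has $x^p=x$, forcing $y^2=0$ and hence $y=0$, so there are $p$ affine $\F_p$-points plus the point at infinity; this gives $\sum_a\lambda_a=\Tr(F^*|H^1)=0$. Second, for $r\in\F_p$ the fixed points of $\s_1^r\circ F$ are pairs $(x,y)$ with $y\in\F_p$ and $y^2 = x^p-x = -r$; because $F$ is purely inseparable each fixed point contributes $1$ to Lefschetz, yielding
\[\#\Fix(\s_1^r\circ F)=1+p\Bigl(1+\lege{-r}{\F_p}\Bigr)\]
(with the convention $\lege{0}{\F_p}=0$) and hence $\Tr((\s_1^r F)^*|H^1)=-p\lege{r}{\F_p}$, using $\lege{-1}{\F_p}=1$ (this is where $p\equiv1\pmod 4$ enters). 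Equating $\sum_a\zeta^{ar}\lambda_a=-p\lege{r}{\F_p}$ for all $r\in\F_p$, Fourier inversion on $\F_p$ combined with the quadratic Gauss sum identity $\sum_a\lege{a}{\F_p}\zeta^{ar}=\lege{r}{\F_p}\sqrt p$ (which requires $G^2=p$, i.e. $p\equiv 1\pmod 4$) forces $\lambda_a = -\lege{a}{\F_p}\sqrt p$.

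Substituting back, the lemma follows by evaluating $(-1)^n\sum_{a=1}^{p-1}\zeta^{ar}\lambda_a^f$ case by case. For $f$ even, $\lambda_a^f = p^{f/2}$ and the inner sum is $-1$ if $p\nmid r$ and $p-1$ if $p\mid r$. For $f$ odd, $\lambda_a^f = -\lege{a}{\F_p}p^{(f-1)/2}\sqrt p$ and the inner sum is again a Gauss sum, producing $\lege{r}{\F_p}\sqrt p$ for $p\nmid r$ and $0$ otherwise. The main obstacle is the identification of the $\lambda_a$: it combines the Weil bound, two separate point counts, and the Gauss sum identity, and crucially uses $p\equiv 1\pmod 4$ to force both $\lege{-1}{\F_p}=1$ and $G^2=p$ so that the relevant square root of $p$ is real.
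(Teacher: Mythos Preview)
Your argument is correct and essentially complete. Both proofs reduce via $\iota^*=-\id$ and both use the Lefschetz count for $\s_1^r\circ F$, but from there the strategies diverge. The paper does not decompose $H^1$ into $\s_1^*$-eigenlines; instead it establishes $(F^2)^*=p\cdot\id$ directly by counting $|C_{1,0}(\F_{p^2})|=p+1$, and it pins down $\Tr(\s_1^r)^*$ by arguing that the characteristic polynomial of $\s_1^*$ over $\Z$ must be $\Phi_p$. The even-$f$ case then follows by factoring out $(F^*)^f=p^{f/2}\id$, and only the odd-$f$ case invokes the fixed-point count for $\s_1^r\circ F$. Your route is more spectral: you use that same fixed-point count for \emph{all} $r$, invert the resulting character sums with the quadratic Gauss sum identity to extract each Frobenius eigenvalue $\lambda_a=-\lege{a}{\F_p}G$, and then reassemble. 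This yields the finer information that $F^*$ acts on the $\zeta^a$-line by $-\lege{a}{\F_p}G$, at the cost of invoking Fourier inversion and the evaluation $G^2=p$; the paper's approach is a bit more elementary but gives only the aggregate traces. One cosmetic point: your ``$\sqrt p$'' is really the Gauss sum $G\in\Q_\ell(\zeta)$ (its sign depends on the choice of $\zeta$), but since only $G^2=p$ enters the final formulas this causes no ambiguity.
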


\begin{proof}
It is straightforward to verify that $\s_1$, $F$, and $\iota$ commute. The hyperelliptic involution $\iota$ acts as multiplication by $-1$ on the Jacobian variety, so $(\iota^n)^*=(-\Id)^n$.

The curve $C_{1,0}$ has genus $\frac{p-1}{2}$, and $\dim H^1_{\et}((C_{1,0})_{\overline{\F}_p},\Q_\ell)=p-1$. We recall from the classical theory that the action of $F^*$  is semisimple and its eigenvalues have absolute value $\sqrt{p}$. We claim that $(F^2)^*$ acts as mulplication by $p$. For this we only need to show $\Tr(F^2)^*=p(p-1)$. The Lefschetz trace formula \[\Tr(F^2)^* =1+p^2-| C_{1,0}(\F_{p^2})|\] leaves to prove $| C_{1,0}(\F_{p^2})|=p+1$. For every $x\in\F_{p^2}$ we have $\Tr_{\F_{p^2}/\F_p}(x^p)=\Tr_{\F_{p^2}/\F_p}(x)$. It follows that every affine point $(x,y)\in C_{1,0}(\F_{p^2})$ must be such that $\Tr_{\F_{p^2}/\F_p}(y^2)=0$, which is equivalent to $y^2+y^{2p}=0$. The non-zero solutions of the latter satisfy $y^{2(p-1)}=-1$, raising this to the odd power $\frac{p+1}{2}$ leads to a contradiction. Thus, the affine points of $C_{1,0}(\F_{p^2})$ are $(x,0)$ with $x\in\F_{p}$, and thus the claim holds.

The polynomial $X^2-p$ is irreducible over $\Z$. Since the characteristic polynomial of $F^*$ is in $\Z[X]$, it must be $(X^2-p)^{\frac{p-1}{2}}$. 

We have $0=(\s_1^p)^*-\Id=(\s_1^*-\Id)\Phi_p(\s_1^*)$ where $\Phi_p\in\Z[X]$ is the $p$-th cyclotomic polynomial. The characteristic polynomial $P_{\s_1}$ of $\s_1^*$ is in $\Z[X]$, so its unitary irreducible divisors can only be $X-1$ or $\Phi_p$. Since $\deg P_{\s_1}=p-1$, we must have $P_{\s_1}(X)=(X-1)^{p-1}$ or $P_{\s_1}=\Phi_p$. The first case is impossible since $\s_1^*$ is nontrivial. Thus, $\Tr(\s_1^r)^*=-1$ if $r$ is prime to $p$, and $\Tr(\s_1^r)^*=p-1$ otherwise. The formulas for the case when $f$ is even hence follow.

If $f$ is odd, then
\begin{equation}\label{eq:trace_f_odd}
\Tr (\iota^n\circ\s_1^r\circ F^f)^*=(-1)^np^{\frac{f-1}{2}}\Tr(\s_1^r\circ F)^*. \end{equation} We use the Lefchetz formula \[\Tr(\s_1^r\circ F)^*=1+p-\left|\Fix(\s_1^r\circ F)\right|.\] The affine points $(x,y)\in C_{1,0}(\overline{\F}_p)$ fixed by $\s_1^r\circ F$ satisfy $x=x^p+r$ and $y=y^p$, so $y\in\F_p$ and $-r=x^p-x=y^2$. The latter equation has exactly $\lege{-r}{\F_p}+1=\lege{r}{\F_p}+1$ solutions in $y$ for each $r\in\F_p$. Each solution $y$ gives exactly $p$ solutions for $x^p-x=y^2$. We have proved that $\s_1^r\circ F$ has exactly $p\big(\lege{r}{\F_p}+1\big)+1$ fixed points, so 
\begin{equation}\label{eq:trace_a_frob}
\Tr(\s_1^r\circ F)^*=-\lege{r}{\F_p}p.
\end{equation}
Substituting \eqref{eq:trace_a_frob} into \eqref{eq:trace_f_odd} finishes the proof. 
\end{proof}

\begin{rem}
If $p\equiv 3\bmod4$, then using similar methods one can compute the order $|C_{1,0}(\F_{p^2})|=p(2p-1)+1$ and prove that $(F^*)^2+p=0$. Consequently, analogous formulas for the traces can be given.
\end{rem}

\subsection{The 5-adic setting}\label{subs:5-adic_wild} We now continue in the situation where $K$ is $5$-adic and $C/K$ is a curve of genus $2$ whose $\ell$-adic representation has cyclic wild inertia image $\rho_\ell(I_K^{\wild})$ of order $5$. Recall the notation of \ref{subs:A_5_J_10}.

\subsection{Galois action on the smooth model}\label{subs:extend_galois_model} By Prop.~\ref{prop:explicit_IM} and Prop.~\ref{prop:wild5_good_equiv}, the curve $C_{L}/L$ has good reduction, so its minimal regular model $\Cc'/\O_{L}$ is smooth. For every finite Galois extension $K'/K$ containing $L$, the minimal regular model of $C_{K'}/K'$ is given by the base change $\Cc'_{\O_{K'}}:=\Cc'\times_{\O_L}\O_{K'}$. Every element of $\Gal(K'/K)$ gives an $K'$-semilinear automorphism $C_{K'}\xrightarrow{\sim} C_{K'}$, which extends uniquely to an $\O_{K'}$-semilinear automorphism $\Cc'_{\O_{K'}}\xrightarrow{\sim} \Cc'_{\O_{K'}}$ (see, e.g., \autocite[Corollary~1.2]{liu_neron}). Passing to the projective limit shows that each $\gamma\in\Ga_K$ induces an $\O_{\overline{K}}$-semilinear automorphism $\gamma_{\Cc'}:\Cc'_{\O_{\overline{K}}}\xrightarrow{\sim} \Cc'_{\O_{\overline{K}}}$. The morphism preserves the special fiber, so we obtain a $\overline{k}_K$-semilinear $\Ga_K$-action on $\Cc'_{\overline{k}_K}$.

\begin{comment} so we obtain a commutative diagram
\begin{equation}\label{eq:galois_fiber_diag}\begin{tikzcd}[row sep=scriptsize,column sep=scriptsize]
\Cc'_{\overline{k}_K}\arrow[rrr, "\gamma_{\Cc'}"]\arrow[ddd]\arrow[dr]&[-10pt] & &[-10pt] \Cc'_{\overline{k}_K}\arrow[dl]\arrow[ddd]&\\[-15pt]
&\Cc'_{\O_{\overline{K}}}\arrow[r]\arrow[d]&\Cc'_{\O_{\overline{K}}}\arrow[d] &\\
&{\Spec\O_{\overline{K}}}\arrow[r,"\gamma"] & { \Spec\O_{\overline{K}}}&\\[-11pt]
{\Spec\overline{k}_K}\arrow[rrr]\arrow[ur, shorten <= -.4em, shorten >= -.4em]& & &{\Spec\overline{k}_K}\arrow[ul, shorten <= -.4em, shorten >= -.4em]&
\end{tikzcd}\end{equation}
\end{comment}
By functoriality, $\Ga_K$ acts on $H^1_{\et}(\Cc'_{\overline{k}_K},\Q_\ell)$, and, for every $n\in\Z$ prime to $p$, the smooth base change theorem provides an isomorphism of $\Ga_K$-modules 
\begin{equation}\label{eq:et_coh_fiber_iso}
H^1_{\et}(C_{\overline{K}},\Z/n\Z)\cong H^1_{\et}(\Cc'_{\overline{k}_{K}},\Z/n\Z).\end{equation}

We note that every element $\gamma\in\Ga_L$ acts on $\Cc'_{\O_{\overline{K}}}=\Cc'\times_{\O_L}\O_{\overline{K}}$ as $\id \times\gamma$. Since $I_K$ acts trivially on $\overline{k}_K$, the group $I_L$ acts trivially on $\Cc'_{\overline{k}_K}$, thus inducing an action of $I_K/I_L$ on $\Cc'_{\overline{k}_K}$ by $\overline{k}_K$-automorphisms. We obtain a chain of group homomorphisms 
\begin{equation}\label{eq:autom_inj} I_K/I_L\hookrightarrow \Aut(\Cc'_{\overline{k}_K})\to\Aut\!\left(H^1_{\et}(\Cc'_{\overline{k}_K},\Q_\ell)\right)\xrightarrow{\sim}\Aut\!\left(H^1_{\et}(C_{\overline{K}},\Q_\ell)\right).\end{equation}

\begin{prop}\label{prop:iso_Ca0}
Let $\s\in I_K^{\wild}$. The induced automorphism $\s_{\Cc'}$ on $\Cc'_{\overline{k}_K}$ descends to $k_L$, and $\Cc'_{k_L}$ is $k_L$-isomorphic to $C_{a,0}$ for some $a\in k_L^\times$. The automorphism of $C_{a,0}$ induced by $\s_{\Cc'}$ is given by $\s_a^r:(x,y)\mapsto (x+r,y)$ with some $r\in\F_p$. The image $\rho_\ell(\s)$ is nontrivial if and only if $r\neq0$.
\end{prop}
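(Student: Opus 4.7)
The plan is first to show that $\sigma_{\Cc'}$ descends to an automorphism of $\Cc'_{k_L}$, then to identify the quotient curve via Artin--Schreier theory and the hyperelliptic involution, then to use the $k_L$-rationality of the Weierstrass points to arrange the constant term to be zero, and finally to pin down the shape of $\sigma_{\Cc'}$.

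For the descent: the action of $\Ga_K$ on $\Cc'_{\overline{k}_K}$ factors through $\Ga_K/I_L$ by \ref{subs:extend_galois_model}, so descent of $\sigma_{\Cc'}$ to $k_L$ reduces to checking that $[\tilde\tau,\sigma]\in I_L$ for every $\tilde\tau\in\Ga_L$. As $I_K^{\wild}$ is normal in $\Ga_K$, this commutator lies in $I_K^{\wild}$, and the required condition becomes commutation of $\rho_\ell(\tilde\tau)$ with $\rho_\ell(\sigma)$; writing $\tilde\tau=\varphi_L^n\iota'$ with $\iota'\in I_L$ (so $\rho_\ell(\iota')=1$), this is Prop.~\ref{prop:almost_abelian}(1). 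Assuming now that $\rho_\ell(\sigma)\neq 1$, so $\sigma_{\Cc'}$ has order $5$ on $\Cc'_{k_L}$, I would apply Riemann--Hurwitz to the degree-$5$ Artin--Schreier cover $\Cc'_{k_L}\to D:=\Cc'_{k_L}/\langle\sigma_{\Cc'}\rangle$ in characteristic $5$; the genus constraint $g(\Cc'_{k_L})=2$ forces $g(D)=0$ with a unique branch point of invariant $d=2$. As this branch point is canonical it is $k_L$-rational, so $D\simeq\Pro^1_{k_L}$; placing it at $\infty$, Artin--Schreier theory presents the cover as $x^5-x=ay^2+by+c$ with $a\in k_L^\times$. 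The hyperelliptic involution $\iota$, being central in $\Aut(\Cc'_{k_L})$, commutes with $\sigma_{\Cc'}$ and descends to an involution of $D$ fixing $\infty$ that is normalized to $y\mapsto -y$ (since the residue characteristic is odd); inspecting how $\iota$ acts on the Artin--Schreier generator $x$ then forces $b=0$.

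The main obstacle is to arrange $c=0$, i.e.\ to show that $c\in\{s^5-s:s\in k_L\}$. The key observation is that by Prop.~\ref{prop:explicit_IM}, $L\supseteq M=K(J(C)[2])$, and $M$ is the splitting field of the Weierstrass polynomial of $C$; hence all six Weierstrass points of $C$ are $L$-rational, and by smoothness of $\Cc'/\O_L$ they extend to $\Cc'(\O_L)$-sections whose reductions lie in $\Cc'_{k_L}(k_L)$. In the presentation $x^5-x=ay^2+c$, the five finite Weierstrass points are the roots of $x^5-x-c$, so this polynomial must split over $k_L$. Picking any root $\alpha\in k_L$ and substituting $x\mapsto x+\alpha$ eliminates $c$, giving $\Cc'_{k_L}\cong_{k_L} C_{a,0}$. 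Finally, a direct determination of $\Aut(C_{a,0})$ (using that an order-$5$ automorphism must preserve the canonical Artin--Schreier cover to $\Pro^1_y$, since the unique branch point is invariant, and that wild automorphisms of $\Pro^1_y$ fixing $\infty$ are trivial) shows that every order-$5$ automorphism is of the form $(x,y)\mapsto(x+r,y)$ with $r\in\F_5$; hence $\sigma_{\Cc'}=\sigma_a^r$ for some $r$. The equivalence $r\neq 0\Leftrightarrow \rho_\ell(\sigma)\neq 1$ follows from the injectivity in \eqref{eq:autom_inj}: $\rho_\ell(\sigma)=1$ forces $\sigma\in I_L$ and $\sigma_{\Cc'}=\id$, i.e.\ $r=0$, and the converse is immediate.
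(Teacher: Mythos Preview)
Your proof is correct and follows the same strategy as the paper: descent via Prop.~\ref{prop:almost_abelian}, Riemann--Hurwitz for the genus-zero quotient with a single $k_L$-rational branch point, an Artin--Schreier presentation with $f\in k_L[y]$ of degree $2$, and rationality of the $2$-torsion over $L$ (hence over $k_L$ via \eqref{eq:et_coh_fiber_iso}) to eliminate the constant $c$. The only cosmetic differences are that the paper removes the linear term in $y$ simply by completing the square (valid since $\operatorname{char} k_L\neq 2$) rather than invoking the hyperelliptic involution, and it reads off the shape of $\sigma_{\Cc'}$ directly from the fact that $y$ generates the $\Gamma$-fixed field by construction (so $\sigma(y)=y$ and $\sigma(x)-x\in\F_5$ immediately), rather than characterizing all order-$5$ automorphisms of $C_{a,0}$.
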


\begin{proof}
If $\rho_\ell(\s)=\Id$, then $\s\in I_L$, so $\s_{\Cc'}$ is the identity on $\Cc'_{\overline{k}_K}$ by \eqref{eq:autom_inj}. in the same way, if $\rho_\ell(\s)\neq\Id$, then the class of $\s$ in $I_K/I_L$ has order $5$, so it induces an automorphism on $\Cc'_{\overline{k}_K}$ of order $5$.

We have seen in Prop.~\ref{prop:almost_abelian} that the classes of $\s$ and $\varphi_L$ commute in $\Ga_K/I_L$. It follows that they commute as scheme-automorphisms of $\Cc'_{\overline{k}_L}$, which means that $\s_{\Cc'}$ descends to a $k_L$-automorphism of $\Cc'_{k_L}$.    

The main arguments for the second part are given in \autocite{roquette} and \autocite{homma_aut}, which we specialize to our situation. Let $\Ga\simeq C_5$ be the image of $I_K^{\wild}$ is $\Aut(\Cc'_{k_L})$, and let $\pi:\Cc'_{k_L}\to \Cc'_{k_L}/\Ga$ be the quotient map, which is defined over $k_L$. As a consequence of the Hurwitz formula, \autocite[Remark~1.2.(A).(b)]{homma_aut} shows that $\Ga$ fixes a unique closed point $P$ in $\Cc'_{k_L}$, and that $\Cc'_{k_L}/\Ga$ has genus zero. Since $\Ga$ commutes with $(\varphi_L)_{\Cc'}$, the point $(\varphi_L)_{\Cc'}(P)$ is also fixed by $\Ga$, so $(\varphi_L)_{\Cc'}(P)=P$, meaning that $P$ is a $k_L$-rational point. Then $\pi(P)$ is $k_L$-rational, so $\pi$ is in indeed a finite $k_L$-morphism $\Cc'_{k_L}\to \Pro^1_{k_L}$ of degree $5$ ramified only at $P$. Let $k_{L}(\Cc'_{k_L})$ denote the function field of $\Cc'_{k_L}$, then $k_L(\Cc'_{k_L})^{\Ga}$ is a rational function field over $k_L$, and we let $y$ be a generator having a (unique) pole at $P$. 

Since $k_L(\Cc'_{k_L})/k_L(y)$ is cyclic of order $5$, applying Artin--Schreier theory we have $k_L(\Cc'_{k_L})=k_L(x,y)$ satisfying an equation $x^5-x=f$ with $f\in k_L(y)$. Furthermore, since the pole of $y$ is the unique branch point of $\pi$, we may suppose that $f\in k_L[y]$. Since $\Cc'_{k_L}$ has genus 2, we must have $\deg f=2$. We may further suppose that $f(y)=ay^2+c$ with $a,c\in k_L$, $a\neq0$, thus we have a $k_L$-isomorphism $\Cc'_{k_L}\simeq C_{a,c}$.

With our particular choice of $L/K$ in \ref{subs:A_5_J_10}, the points of $J(C)[2]$ are rational over $L$. The isomorphism \eqref{eq:et_coh_fiber_iso} implies that the points of $J(C_{a,c})[2]$ are $k_L$-rational, which means that the polynomial $x^5-x-c$ splits completely over $k_L$. By translating $x$ with one of the roots we find that $\smash{\Cc'_{k_L}}\simeq C_{a,0}$  as $k_L$-schemes.

Lastly, every $\gamma\in\Ga$ fixes $y$, so $x-\gamma(x)$ is a root of $X^5-X=0$, thus giving $\gamma=\s_a^r$ for some $r\in\F_5$, and $r=0$ if and only if $\gamma$ is trivial.
\end{proof}

\begin{prop}\label{prop:repr_traces}
We fix $\s\in I_K^{\wild}$. Let $a\in k_L^\times$ and $r\in\F_5$ be as in Prop.~\ref{prop:iso_Ca0}. For every $m,n\in\Z$ we have \[\Tr\rho_\ell(\s^m\varphi_L^n)=\begin{cases}
-\lege{a}{k_L}^n 5^{\frac{n[k_L:\F_5]}{2}}& \text{ if $n[k_L:\F_5]$ is even and $5\nmid m$,} \\ 
\lege{a}{k_L}^n 4\cdot 5^{\frac{n[k_L:\F_5]}{2}}& \text{ if $n[k_L:\F_5]$ is even and $5\mid m$,}\\
-\lege{a}{k_L}^n \lege{rm}{\F_5}5^{\frac{n[k_L:\F_5]+1}{2}}& \text{ if $n[k_L:\F_5]$ is odd.}
\end{cases}\]
\end{prop}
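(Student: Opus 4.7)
The plan is to transfer the computation from the $\ell$-adic cohomology of the generic fibre $C_{\overline K}$ to that of the special fibre $\Cc'_{\overline{k}_L}$ via the base change isomorphism~\eqref{eq:et_coh_fiber_iso} (after tensoring with $\Q_\ell$), to use Prop.~\ref{prop:iso_Ca0} to identify the latter with the Artin--Schreier curve $C_{a,0}$ together with the action of $\s$ as $\s_a^r$, and finally to reduce the situation to $C_{1,0}/\F_5$ so that Lemma~\ref{lem:traces_C10} applies.

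Under this identification the image $\rho_\ell(\s^m)$ acts on $H^1_\et((C_{a,0})_{\overline{\F}_5},\Q_\ell)$ as $(\s_a^{rm})^*$, while $\rho_\ell(\varphi_L^n)$ acts as the $n$th power of the pullback by the geometric Frobenius of $C_{a,0}/k_L$. Over $\overline{\F}_5=\overline{k}_L$ the curves $C_{a,0}$ and $C_{1,0}$ become isomorphic via $(x,y)\mapsto(x,y\sqrt a)$, an isomorphism that sends $\s_a$ to $\s_1$. When $a\in (k_L^\times)^2$ this isomorphism is already defined over $k_L$ and hence identifies the geometric Frobenius of $C_{a,0}/k_L$ with $F^{[k_L:\F_5]}$ on $C_{1,0}$. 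When $a\notin(k_L^\times)^2$, the curve $C_{a,0}/k_L$ is the quadratic twist of $C_{1,0}/k_L$ by the cocycle $\Gal(k_L(\sqrt a)/k_L)\to\Aut(C_{1,0})$ sending the nontrivial element to the hyperelliptic involution $\iota$; a direct computation on $\overline{\F}_5$-points shows that the transported geometric Frobenius becomes $\iota\circ F^{[k_L:\F_5]}$.

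Since $\iota$ commutes with both $F$ and $\s_1$ on $C_{1,0}$, and since $\iota^*=-\Id$ on $H^1_\et$, taking pullbacks yields in both cases
\[\Tr\rho_\ell(\s^m\varphi_L^n)=\lege{a}{k_L}^n\cdot\Tr\bigl(\s_1^{rm}\circ F^{n[k_L:\F_5]}\bigr)^*\]
on $H^1_\et((C_{1,0})_{\overline{\F}_5},\Q_\ell)$. Applying Lemma~\ref{lem:traces_C10} with $p=5$, with $\iota$-power zero, $\s_1$-power $rm$, and $F$-power $n[k_L:\F_5]$ produces the three stated cases.

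The main delicate step is tracking the geometric Frobenius under the $\overline{\F}_5$-iso\-mor\-phism between $C_{a,0}$ and $C_{1,0}$ in the non-square case, which is the origin of the twist factor $\lege{a}{k_L}^n$. This can be cross-verified by directly counting $\F_{q_L^n}$-points of $C_{a,0}$ and comparing with the Lefschetz trace formula.
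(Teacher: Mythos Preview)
Your proposal is correct and follows essentially the same route as the paper: identify the special fibre with $C_{a,0}$ via Prop.~\ref{prop:iso_Ca0}, pass to $C_{1,0}$ over $\overline{\F}_5$ by $(x,y)\mapsto(x,y\sqrt{a})$, observe that this intertwines $\s_a^r$ with $\s_1^r$ and the $q_L$-power Frobenius with $F^{[k_L:\F_5]}$ or $\iota\circ F^{[k_L:\F_5]}$ according to $\lege{a}{k_L}$, and then invoke Lemma~\ref{lem:traces_C10}. Your framing of the non-square case as a quadratic twist by $\iota$ is a nice way to phrase what the paper does by direct computation, but the content is the same.
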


\begin{proof}
By Prop.~\ref{prop:iso_Ca0}, the automorphism induced by $\s$ on $C_{a,0}$ is given by $\s_a^r:(x',y')\mapsto(x'+r, y')$. From the classical theory of the Frobenius actions on the étale cohomology group we know that the morphism $F_q:(x',y')\mapsto(x'^{q_L}, y'^{q_L})$ of $C_{a,0}$ induces the action of $\varphi_L$ on $H^1_{\et}(\Cc'_{\overline{k}_K},\Q_\ell)$. 

We fix a square root $\sqrt{a}\in\overline{k}_K$. Then there is a $\overline{k}_K$-isomorphism $C_{1,0}\to C_{a,0}$ given by $(x,y)\mapsto (x,\frac{y}{\sqrt{a}})$. Using this isomorphism we compute that the $\overline{k}_K$-automorphism on $C_{1,0}$ induced by $\s_a^r$ descends to $\F_5$ and is exactly $\s_1^r$. Similarly, $F_q$ induces $F^{[k_L:\F_5]}\circ\iota$ on $C_{1,0}$ if $\lege{a}{k_L}=-1$ or $F^{[k_L:\F_5]}$ if $\lege{a}{k_L}=1$.

Therefore, \[\Tr\rho_\ell(\s^m\varphi^n_L)=\lege{a}{k_L}^n\cdot\Tr\left(\s_1^{rm}\circ F^{n[k_L:\F_5]}\right)^*.\] The desired formulas follow from Lemma~\ref{lem:traces_C10}.
\end{proof}

\subsection{Square classes of differences of Weierstrass roots}\label{subs:diff_square} Let $Y^2=P(X)$ be a Weierstrass equation defining $C/K$ with $P\in K[X]$ unitary of degree $5$ as in Prop.~\ref{prop:hyperell_eq_spec}. In particular, $A_5=1$. Any element $\s\in I^{\wild}_K$ for which $\rho_\ell(\s)$ is nontrivial acts transitively on the roots of $P$. We fix a root $\a_1\in M$ of $P$, then the other roots are $\a_i:=\s^{i-1}(\a_1)$. Following Prop.~\ref{prop:iso_Ca0}, the exists $a\in k_L^\times$ such that $\Cc_{k_L}\simeq C_{a,0}$ over $k_L$, and $\s$ induces $\s_a^r\in\Aut(C_{a,0})$ for some $r\in\F_p^\times.$ We note that the curve $C_{a,0}$ is $k_L$-isomorphic to the curve defined by the equation $y'^2=x'^5-a^4x'$, where $y'=a^3y$ and $x'=ax$. We have $\s_a^r:(x',y')\mapsto(x'+ar,y')$. 

\begin{prop}\label{prop:diff_square} The following properties hold :
\begin{enumerate}
\item The valuation of $\a_i-\a_j$ is the same for every $i\neq j$;
\item Assume that $[k_L:\F_5]$ is odd. There exists $\s\in I_K^{\wild}$ such that $\lege{ar}{k_L}=1$. In this case, $\a_1-\s(\a_1)\in(L^\times)^2$. 
\end{enumerate}
\end{prop}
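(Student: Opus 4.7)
The plan is to bring $Y^2 = P(X)$ into a ``smooth Weierstrass'' form over $\O_L$ and read both assertions off it. I would invoke the smoothness of the minimal regular model $\Cc'/\O_L$ (Prop.~\ref{prop:explicit_IM}, Prop.~\ref{prop:wild5_good_equiv}) to produce an integral Weierstrass equation $Y'^2 = Q(X')$ with $Q \in \O_L[X']$ unitary of degree $5$ and of unit discriminant, defining the affine chart of $\Cc'$ complementary to the Weierstrass point at infinity. The only substitution in \eqref{eq:hyperell_change_var} relating two such unitary equations with the same point at infinity is $X = u^2 X' + \lambda, Y = u^5 Y'$ for some $u \in L^\times, \lambda \in L$: the condition $c = 0$ preserves $\infty$ and unitarity then forces the square scaling on $X$. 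Using \eqref{eq:var_change_disc} together with Prop.~\ref{prop:inertia_disc_val} one finds $v_L(u) = v_L(\Delta(P))/40 \in \Z_{\geq 0}$. The roots of $Q$ are $\eta_i := (\alpha_i - \lambda)/u^2 \in \O_L$, pairwise distinct modulo $\m_L$. From $\alpha_i - \alpha_j = u^2 (\eta_i - \eta_j)$ with $\eta_i - \eta_j \in \O_L^\times$ one gets $v_L(\alpha_i - \alpha_j) = 2 v_L(u)$, independent of $i \neq j$, proving~(1).

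For (2), I first pick $\sigma$ wisely. Replacing $\sigma$ by $\sigma^j$ with $j \in \F_5^\times$ changes the induced automorphism on $C_{a,0}$ to $\sigma_a^{rj}$, replacing $ar$ by $arj$. When $[k_L:\F_5] =: d$ is odd, the element $2 \in \F_5^\times$ (of order $4$) remains a non-square in $k_L^\times$, since $4 \mid (5^d - 1)/2$ iff $d$ is even; thus $\F_5^\times$ surjects onto $k_L^\times/(k_L^\times)^2$ and one can choose $j$ so that $\lege{arj}{k_L} = 1$. So I may assume $\lege{ar}{k_L} = 1$. By~(1), $\alpha_1 - \sigma(\alpha_1) = u^2 (\eta_1 - \eta_2)$ with $u^2 \in (L^\times)^2$, and Hensel's lemma (using $\carF k_L \neq 2$) reduces the claim to showing $\bar\eta_1 - \bar\eta_2 \in (k_L^\times)^2$.

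Composing the $k_L$-iso $\Cc'_{k_L} \iso C_{a,0}$ of Prop.~\ref{prop:iso_Ca0} with $(x,y)\mapsto(ax,a^3y)$ identifies $\Cc'_{k_L}$ with the curve $y''^2 = x''^5 - a^4 x''$ so that $\sigma$ acts by $x'' \mapsto x'' + ar$. Both sides are in unitary short Weierstrass form of odd degree with the same point at infinity, so the same analysis of \eqref{eq:hyperell_change_var} as above forces the identification to be $X' = \mu^2 x'' + \nu, Y' = \mu^5 y''$ with some $\mu \in k_L^\times$ and $\nu \in k_L$. Transporting the action of $\sigma$ then yields $\bar\eta_{i+1} = \bar\eta_i + \mu^2 ar$, so
\[
\bar\eta_1 - \bar\eta_2 = -\mu^2 ar \equiv ar \pmod{(k_L^\times)^2},
\]
using $-1 = 2^2 \in (\F_5^\times)^2 \subset (k_L^\times)^2$. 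By the choice of $\sigma$ this is a square in $k_L^\times$, completing the proof. The main point requiring care, and thus the main obstacle, is verifying that the coordinate change between the two short Weierstrass presentations of the special fiber scales $X$ by a perfect square in $k_L^\times$; the rest is tracking through Prop.~\ref{prop:iso_Ca0} and the discriminant formula~\eqref{eq:var_change_disc}.
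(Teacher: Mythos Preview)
Your proof is correct and follows essentially the same strategy as the paper's: pass to a smooth unitary Weierstrass model over $\O_L$, identify the special fiber with $C_{a,0}$ via Prop.~\ref{prop:iso_Ca0}, and compare the two descriptions of the $\s$-action on the special fiber to read off the square class of $\a_1-\a_2$. The only difference is packaging: the paper constructs the integral model explicitly by scaling $X$ by $\delta^2$ with $\delta=\varpi_L^{e(L/K)v_K(\Delta(P))/40}$ and then notes $b^5=a^2\Rightarrow b\in(k_L^\times)^2$ for the $k_L$-level change, whereas you extract once and for all the observation that any change between unitary degree-$5$ Weierstrass forms fixing $\infty$ scales $X$ by a square, and apply it at both the $L$-level and the $k_L$-level.
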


\begin{proof}
(1) Since $C_L/L$ has good reduction by Prop.~\ref{prop:wild5_good_equiv}, there exists an affine variable change over $L$ which transforms $Y^2=P(X)$ into a Weierstrass equation with coefficients in $\O_L$ and an invertible discriminant (see \autocite[Lemme~3]{liu_models}). An affine transformation modifies all $v_L(\a_i-\a_j)$ by adding the same constant $v$. The new discriminant has zero valuation so we must have $v_L(\a_i-\a_j)+v=0$ for all $i\neq j$.

(2) The existence of $\s$ such that $\lege{ar}{k_L}=1$ follows from $\lege{r}{k_L}=\lege{r}{\F_5}$.

The extension $H/K$ from \ref{subs:A_5_J_10} is at most tamely ramified, so $\s$ acts trivially on it. By Prop.\ref{prop:explicit_IM}, the extension $L/H$ is Galois, so $\s$ acts on $L$. Then, $\s$ induces an $L$-semilinear automorphism of $C_L/L$. 

The action of $\s$ on the function field $K(X,Y)$ with $Y^2=P(X)\in K[X]$ is trivial. Applying the change of variables $X=X'+\a_1$ gives the equation \[Y^2=P'(X'):=X'(X'-\a_2+\a_1)\ldots(X'- \a_5+\a_1)\in M[X'].\] The $\s$-action extends $M$-semilinearly to $M(X,Y)=M(X',Y)$ and \[\s(X')=X'-\a_2+\a_1.\]

With $P$ as in Prop.~\ref{prop:hyperell_eq_spec}, we have $40\mid e(L/K)v_K(\Delta(P))$ by Prop.~\ref{prop:inertia_disc_val}. Let $\varpi_L$ be any uniformizer of $L$ and $\delta:=\varpi_L^{\frac{e(L/K)v_K(\Delta(P))}{40}}$. After applying another change of variables $Y=\delta^5Y''$, $X'=\delta^2X''$ we obtain \[Y''^2=P''(X''):=X''\left(X''-\frac{\a_2-\a_1}{\delta^2}\right)\ldots\left(X''-\frac{\a_5-\a_1}{\delta^2}\right),\] and $\s(X'')=X''-\frac{\a_2-\a_1}{\delta^2}$. The formula \eqref{eq:var_change_disc} gives \[v_L\left(\Delta(P'')\right)=v_L\left(\delta^{-100} \cdot \delta^{60}\Delta(P)\right)=e(L/K)v_K\left(\Delta(P)\right)-40v_L(\delta)=0.\]

For all $i\neq j$, applying part (1) gives \[v_L\left(\frac{\a_i-\a_j}{\delta^2}\right)=\frac{1}{20}v_L(\Delta(P))-2v_L(\delta)=0.\] 

It follows that $Y''^2=P''(X')$ defines a smooth model $\mathcal{W}/\O_L$ of $C_L/L$, which is unique up to isomorphism. Its reduction $\Wc_{k_L}/k_L$ must be $k_L$\mydash{}isomorphic to the curve $C_{a,0}/k_L$, defined by $y'^2=x'^5-a^4x'$. Let $x''$ denote the class of $X''$ in the function field of $\Wc_{k_L}$. By construction, the points at infinity of both of these models are fixed by the $k_L$-linear automorphisms induced by $\s$. Since on each curve there is a unique such fixed point (proven in \autocite{homma_aut}), there must be an affine variable change $y''=ay'$ $x''=bx'+c$ for some $a,b,c \in k_L$. Then $b^5=a^2$, so $b$ is a square in $k_L$. 

On one hand, as pointed out in \ref{subs:diff_square}, we have \[\s(x'')=b\s(x')+c=bx'+bar+c,\] and on the other hand, from the construction of $P''$, we have \[\s(x'')=bx'+c+\left(\frac{\a_1-\a_2}{\delta^2}\bmod \m_L\right).\] Thus, the class of $\frac{\a_1-\a_2}{\delta^2}$ in $k_L$ is $bar$, which is a square, so $\a_1-\a_2\in (L^\times)^2$.
\end{proof}

\begin{prop}\label{prop:root_diff_square}\vphantom{a}
\begin{enumerate}
\item We have $H\subset M$. 
\item For all $k\neq l$, the element $\a_k-\a_l$ is a square in $L(\zeta_8)$.
\end{enumerate}
\end{prop}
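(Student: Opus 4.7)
The plan is to prove (1) first, then deduce (2).

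For (1), from $\beta=-\Delta(P)=-2^8 D^2$ with $D=\prod_{k<l}(\alpha_k-\alpha_l)\in M$ and $\sqrt{-1}\in K$ (available since $5\equiv 1\bmod 4$), I get $\beta^{1/2}=\pm 16\sqrt{-1}\,D\in M$. Hence $H\subset M$ reduces to $\sqrt{-1}\,D\in(M^\times)^2$. To verify this, I plan to compute the class of the unit $D/\pi_M^{v_M(D)}\bmod\m_M$ in $k_M^\times/(k_M^\times)^2$ via the explicit model $C_{a,0}$ of $\Cc'_{k_L}$ from Prop.~\ref{prop:iso_Ca0}: its Weierstrass points correspond to $\{0,\pm 1,\pm\sqrt{-1}\}$, and a direct calculation gives $\prod_{0\leq i<j\leq 4}(w_i-w_j)=\pm 16\sqrt{-1}$ up to factors that reduce to squares. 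Hence the class of $D$ modulo squares in $k_M^\times$ equals that of $\pm\sqrt{-1}$, which is a square iff $\zeta_8\in k_M$, iff $\zeta_8\in M$. In either eventuality, $\sqrt{-1}\,D$ lands in $(M^\times)^2$: either $\sqrt{D}\in M$ and $\sqrt{-1}=\zeta_8^2\in(M^\times)^2$, or $M(\sqrt{D})=M(\zeta_8)$ (both being the unramified quadratic of $M$) and a short Kummer computation gives $\sqrt{-1}\,D=(\zeta_8\sqrt{D})^2\in(M^\times)^2$.

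For (2), I extend Prop.~\ref{prop:diff_square}(2) by replacing $\sigma$ with $\sigma^s$: the class of $(\alpha_1-\sigma^s(\alpha_1))/\delta^2$ in $k_L^\times/(k_L^\times)^2$ is represented by $sar$ (the factor $b$ being a square, as already shown in that proof). The new ingredient is that $\zeta_8$ generates the unramified quadratic extension of $\Q_5$, so $k_{L(\zeta_8)}$ always contains $\F_{25}$; hence every element of $\F_5^\times$, in particular $s$ and $r$, is a square in $k_{L(\zeta_8)}^\times$. When $[k_L:\F_5]$ is odd, the inclusion $k_L^\times\hookrightarrow k_{L(\zeta_8)}^\times$ lands inside the subgroup of squares (since $k_L^\times$ is the subgroup of $(q_L+1)$-th powers there and $q_L+1$ is even), so $a$ too is a square in $k_{L(\zeta_8)}^\times$, making $sar$ a square. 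When $[k_L:\F_5]$ is even ($L=L(\zeta_8)$), I invoke (1): it yields $\sqrt{-1}\,D\in(M^\times)^2$, hence $D\in(L^\times)^2$ (using $\sqrt{-1}\in(L^\times)^2$ in this case), and then a Galois-orbit argument using the action of $\Gal(L/K)$ on pairs of roots transfers squareness from $D$ to the individual factors $\alpha_k-\alpha_l$. In every case, $\alpha_1-\sigma^s(\alpha_1)\in(L(\zeta_8)^\times)^2$ for all $s\in\F_5^\times$; conjugating by powers of $\sigma$ then covers all pairs.

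The principal obstacle is the residue-class computation in (1): carefully relating the model $C_{a,0}$ over $k_L$ (Prop.~\ref{prop:iso_Ca0}) to the reduction of the smooth model over $\O_M$, and verifying that the product of pairwise differences of Weierstrass points descends to the predicted class $\pm\sqrt{-1}$ in $k_M^\times/(k_M^\times)^2$ (this is delicate when $f(L/M)>1$, since one must track how the scaling factors $a,b,c$ behave under the residue map $k_L\to k_M$). A secondary obstacle is the even case of (2), where passing from $D\in(L^\times)^2$ to individual $\alpha_k-\alpha_l\in(L^\times)^2$ requires a finer analysis of the $\Gal(L/K)$-orbit structure on the set of pairs, which depends on whether $\Gal(M/K)$ is all of $F_5$ or a proper subgroup.
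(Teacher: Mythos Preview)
Your proposal takes a substantially more complicated route than the paper, and the even case of (2) has a genuine gap.

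\textbf{The paper's argument.} The paper avoids the reduced model entirely for this proposition. It uses only two elementary facts: the wild inertia $I_K^{\wild}$ acts trivially on $M^\times/U_M^1$, and $U_M^1$ is a pro-$5$ group (hence $8$-divisible). From the first fact and the telescoping identity
\[
\frac{\alpha_i-\alpha_1}{\alpha_2-\alpha_1}=\sum_{k=0}^{i-2}\frac{\sigma^k(\alpha_2-\alpha_1)}{\alpha_2-\alpha_1}\equiv i-1 \bmod \m_M,
\]
one gets $\prod_{i<j}\bigl((\alpha_i-\alpha_j)/(\alpha_1-\alpha_2)\bigr)^2\equiv -1\bmod\m_M$, so from \eqref{eq:beta_expr} one reads off $\beta\in 2^8(\alpha_1-\alpha_2)^{20}\,U_M^1\subset (M^\times)^4$, proving (1). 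For (2), since $\beta^{1/8}\in L$ one has $\beta\in(L^\times)^8$, whence $(\alpha_1-\alpha_2)^{20}\in(L^\times)^8$; dividing by $(\alpha_1-\alpha_2)^{16}$ gives $(\alpha_1-\alpha_2)^4\in(L^\times)^8$, so $\alpha_1-\alpha_2\in\mu_4\cdot(L^\times)^2\subset(L(\zeta_8)^\times)^2$. No case split, no reduced model, no Galois-orbit analysis.

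\textbf{The gap in your even case.} Your plan there is to deduce the squareness of each $\alpha_k-\alpha_l$ from $D\in(L^\times)^2$ via a Galois-orbit argument. But this cannot work: by the same telescope computation (or by your own argument that the residue of $(\alpha_1-\sigma^s\alpha_1)/\delta^2$ is $s\cdot b\cdot ar$), all ten differences $\alpha_k-\alpha_l$ lie in a \emph{single} square class of $L^\times$ when $[k_L:\F_5]$ is even (since every element of $\F_5^\times$ is then a square in $k_L$). Consequently $D$, being a product of ten such elements, is \emph{automatically} a square in $L^\times$ and carries no information about whether the individual factors are squares. Galois transitivity on pairs only tells you the factors share a common class, not what that class is. The missing input is precisely the $8$th-power relation $\beta\in(L^\times)^8$ coming from $N\subset L$, which pins the class down.

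\textbf{On (1).} Your reduced-model approach may be salvageable, but it is fighting uphill: you have to chase the Weierstrass roots through two coordinate changes and a residue map, and the issue you flag with $f(L/M)>1$ is real at this point in the paper (the maximal-ramification hypothesis of \S\ref{sect:max_inertia}, which would force $L/K$ totally ramified, has not yet been imposed). The paper's two-line computation sidesteps all of this.
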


\begin{proof}
For (2), by replacing $\s$ with some power, without loss of generality we may assume that $k=1$, $l=2$. Applying \eqref{eq:disc_unitary} gives 
\begin{equation}\label{eq:beta_expr} -\beta=\Delta(P)=2^{8}\prod_{i<j}(\a_i-\a_j)^2=2^{8}(\a_1-\a_2)^{20}\prod_{i<j}\left(\frac{\a_i-\a_j}{\a_1-\a_2}\right)^2.\end{equation} 
The wild ramification group $I^{\wild}(M/K)$ acts trivially on $M^\times/U^1_M$, so \[\frac{\a_i-\a_1}{\a_2-\a_1}=\sum_{k=0}^{i-2}\frac{\s^k(\a_2-\a_1)}{\a_2-\a_1}\equiv i-1\bmod \m_M.\] Then \[\prod_{i<j}\left(\frac{\a_i-\a_j}{\a_1-\a_2}\right)^2\equiv \prod_{i<j}(j-i)^2\equiv(288)^2\equiv -1 \bmod \m_M.\]
Since $U_M^1$ is $8$-divisible, it follows that $\beta\in (M^\times)^4$, thus giving (1). Recall that $\b\in(L^\times)^8$, thus $(\a_1-\a_2)^4\in(L^\times)^8$. It follows that $\a_1-\a_2$ is a square in $L(\zeta_8)^\times$, thus proving (2).
\end{proof}

\begin{rem}
Prop.~\ref{prop:root_diff_square} must be contrasted with Prop.~\ref{prop:diff_square}. Unless $L=L(\zeta_8)$, only half of the differences $\a_i-\a_j$ are squares in $L$. 
\end{rem}

\begin{prop}\label{prop:a-vdelta_even}
For any discriminant $\Delta$ of $C/K$ we have $v_K(\Delta)\equiv a(\rho_\ell)\!\bmod \!2$. 
\end{prop}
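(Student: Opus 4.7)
The plan is to first show that $v_K(\Delta)\bmod 2$ is a well-defined invariant of $C/K$ and, via Proposition~\ref{prop:inertia_disc_val}, that it is odd if and only if $\rho_\ell(I_K)\cong C_5\rtimes C_8$; it will then suffice to show $a(\rho_\ell)$ is odd precisely in this maximal case. Well-definedness follows from \eqref{eq:var_change_disc}: two discriminants differ by $e^{20}\det^{-30}\in(K^\times)^2$, both exponents being even.

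Specializing Proposition~\ref{prop:repr_traces} to $n=0$ yields $\Tr\rho_\ell(\s^m)=4$ for $5\mid m$ and $-1$ otherwise. Hence $V|_P$ decomposes as the sum of the four non-trivial characters of $P=\rho_\ell(I_K^{\wild})\cong C_5$, so $V^P=0$ and, since $P\subseteq G:=\rho_\ell(I_K)$, also $V^G=0$. Writing $T$ for the unique lower ramification break of $P$ in $\Gal(L'/K^{\unr})$, the Artin conductor formula in lower numbering then simplifies to
\[
a(\rho_\ell)=4+\frac{4T}{e},\qquad e:=[G:P]\in\{1,2,4,8\}.
\]
For $e\in\{1,2\}$ the right-hand side is automatically an even integer, matching $v_K(\Delta)$ even. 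For $e=8$, realizing $V$ as $\Ind_H^G\chi$ with $H:=C_5\rtimes\langle g^4\rangle\cong C_{10}$ and computing the conductor of the inducing character as $a(\chi)=1+T/2$, integrality forces $T$ to be even, so $a(\rho_\ell)=4+T/2$. The analogous induction from $P$ in the $\Dic_5$ case yields $a(\rho_\ell)=4+T$.

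It remains to verify the fine parity of $T$: $T$ even when $\rho_\ell(I_K)=\Dic_5$, and $T\equiv 2\pmod 4$ when $\rho_\ell(I_K)=C_5\rtimes C_8$. I would obtain both by combining Liu's description $L'_{\tame}=K^{\unr}(\beta^{1/8})$ with $\nu=v_K(\beta)$ (Proposition~\ref{prop:explicit_IM}) and the Artin--Schreier presentation of the special fiber (Proposition~\ref{prop:iso_Ca0}): lifting the wild automorphism $\s_a^r$ to an $\O_{L'}$-semilinear automorphism of the smooth model, the valuation $v_{L'}(\s(\varpi_{L'})-\varpi_{L'})=T$ can be evaluated modulo high powers of the uniformizer, with parity governed by $\nu\bmod 4$ (respectively $\nu\bmod 8$).

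The main obstacle is precisely this last parity refinement on $T$: the rest is either direct computation or follows from integrality, but relating $T\bmod 4$ to the tame invariant $\nu\bmod 8$ in the maximal case requires a careful lift of the Artin--Schreier generator from characteristic $5$ to characteristic $0$.
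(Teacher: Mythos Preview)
Your reduction is sound through the conductor formula $a(\rho_\ell)=4+4T/e$: the well-definedness of $v_K(\Delta)\bmod 2$, the vanishing $V^P=0$ obtained from the trace computation, and the handling of $e\in\{1,2\}$ are all correct, as is the integrality argument forcing $T$ even when $e=8$. But the proof stops precisely where you say it does. For the $\Dic_5$ case there is actually a short fix you did not use: applying Hasse--Arf to the abelian extension $L'$ over the fixed field of $C_{10}\subset\Dic_5$ shows that the upper break $T/2$ is an integer, hence $T$ is even. For the maximal case, however, the same trick (now with $C_{10}\subset C_5\rtimes C_8$) only reproduces $T$ even; the refinement $T\equiv 2\pmod 4$ does not follow from Hasse--Arf on any abelian subquotient, and your proposed route via a characteristic-zero lift of the special-fibre automorphism $\s_a^r$ is only a sketch. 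Making that lift precise and extracting $v_{L'}(\s(\varpi_{L'})-\varpi_{L'})\bmod 4$ from the tame invariant $\nu$ is a genuine computation that you have not carried out, so the argument is incomplete as it stands.

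The paper takes an entirely different route and avoids the wild break $T$ altogether. It invokes Liu's conductor--discriminant formula for genus~$2$ curves, which gives
\[
v_K(\Delta_{\min})-a(\rho_\ell)=m-1+\tfrac{d-1}{2}
\]
in terms of the number $m$ of irreducible components of the geometric special fibre of the minimal regular model and an auxiliary invariant $d$. Liu has tabulated $(m,d)$ for every Namikawa--Ueno type that occurs under the standing wild-ramification hypothesis, and a direct check shows the right-hand side is always even. This is a black-box argument resting on Liu's classification, but it closes the statement without any need to locate or bound the break.
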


\begin{proof}
This is derived from \autocite{liu_g2_disc_cond}. First, $v_K(\Delta)\equiv v_K(\Delta_{\min})\bmod 2$ for $\Delta_{\min}$ associated to a so-called minimal equation. Then, using \autocite[Prop.~1, Thm.~1]{liu_g2_disc_cond} we have $v_K(\Delta_{\min})-a(\rho_\ell)=m-1+\frac{d-1}{2}$ where $m$ is the number of irreducible components of the special geometric fiber of the minimal regular model of $C/K$ and $d$ is a geometric invariant of $C/K$ defined in \autocite[\S5.2]{liu_g2_disc_cond}. Finally, for each possible geometric type of the minimal regular model Liu computes $d$. In our case, $m$ is $1,3,4,5,9,11,$ or $13$ and $d=1$ for each value of $m$, except $d=3$ when $m=4$. 
\end{proof}

\section{Maximal inertia action over $5$-adic fields}\label{sect:max_inertia}

We continue in the setting of \ref{subs:5-adic_wild}.

\begin{prop}\label{prop:hyperell_max_equiv}
The following are equivalent :
\begin{enumerate}
\item $v_K(\Delta)$ is odd for any discriminant $\Delta$ of $C/K$;
\item The extension $M/K$ is totally ramified and $\Gal(M/K)\simeq F_5$;
\item $\rho_\ell(I_K)\simeq C_5\rtimes C_8$;
\item $a(C/K)$ is odd.
\end{enumerate}
\end{prop}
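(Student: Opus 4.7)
My strategy would be to reduce the equivalences among (1), (3), (4) to the parity of $\nu := v_K(\beta)$, where $\beta = -\Delta(P)$ under the normalization $A_5 = 1$ of Prop.~\ref{prop:hyperell_eq_spec}. The change-of-variables formula \eqref{eq:var_change_disc} shows that the parity of $v_K(\Delta)$ is an invariant of $C/K$, and Prop.~\ref{prop:inertia_disc_val} identifies the case $\rho_\ell(I_K) \cong C_5 \rtimes C_8$ with $\nu$ being odd, giving $(1) \Leftrightarrow (3)$. Then $(3) \Leftrightarrow (4)$ would follow from Prop.~\ref{prop:a-vdelta_even}, which gives $v_K(\Delta) \equiv a(C/K) \bmod 2$.

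For $(3) \Rightarrow (2)$, I would exploit the containment $H \subseteq M$ from Prop.~\ref{prop:root_diff_square}(1). Since $\nu$ is odd, $H = K(\beta^{1/4})$ is totally ramified of degree $4$ over $K$, so this containment forces $4 \mid |I(M/K)|$. On the other hand, \ref{subs:red_torsion} (applied with $p = 5 \neq 2$) identifies $I^{\wild}(M/K) \cong \rho_\ell(I_K^{\wild}) \cong C_5$, so $5 \mid |I(M/K)|$. Combined with the bound $|\Gal(M/K)| \leq |F_5| = 20$ from Prop.~\ref{prop:hyperell_irred_5}, this would force $|I(M/K)| = |\Gal(M/K)| = 20$, establishing (2).

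The direction $(2) \Rightarrow (3)$ is expected to be the main obstacle. Under (2), $|I(M/K)| = 20$ and $I(M/K)^{\tame} = F_5/C_5 \cong C_4$. Since $M \subseteq L \subseteq L'$ and $L'/K^{\unr}$ is Galois with group $\rho_\ell(I_K)$, the restriction $\rho_\ell(I_K) \twoheadrightarrow I(M/K)$ descends to a surjection of tame quotients onto $C_4$. By the Silverberg--Zarhin classification recalled in \ref{subs:inertia_class}, this leaves $\rho_\ell(I_K) \in \{\Dic_5, C_5 \rtimes C_8\}$. The delicate step is to exclude $\Dic_5$; my plan is to argue by contradiction. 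If $\rho_\ell(I_K) \cong \Dic_5$, then $[MK^{\unr}:K^{\unr}] = 20 = |\Dic_5| = [LK^{\unr}:K^{\unr}]$, and since $M \subseteq L$, one would have $LK^{\unr} = MK^{\unr}$. This forces $L \subseteq MK^{\unr}$, so $L/M$ is unramified. But \ref{subs:red_torsion} says that $L/M$ is either trivial or totally ramified quadratic, so $L = M$. Then $\rho_\ell(I_K) = I_K/I_M = I(M/K) = F_5$, contradicting $F_5 \not\cong \Dic_5$ as recalled in \ref{subs:inertia_class}. Hence $\rho_\ell(I_K) \cong C_5 \rtimes C_8$, giving (3).
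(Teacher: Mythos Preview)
Your reductions $(1)\Leftrightarrow(3)$ via Prop.~\ref{prop:inertia_disc_val} and $(1)\Leftrightarrow(4)$ via Prop.~\ref{prop:a-vdelta_even} match the paper exactly. For $(3)\Rightarrow(2)$ you use $H\subseteq M$ from Prop.~\ref{prop:root_diff_square}(1) to get $4\mid e(M/K)$, while the paper instead uses $e(L/K)=40$ together with $e(L/M)\leq 2$ from \ref{subs:red_torsion} to get $e(M/K)\geq 20$; both are fine and of comparable length.

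For $(2)\Rightarrow(3)$ your argument is correct in outcome but takes an unnecessary detour, and one intermediate step is mis-justified. You already have the surjection $\rho_\ell(I_K)\twoheadrightarrow I(M/K)=F_5$ onto a group of order $20$. The paper simply reads off from the list in \ref{subs:inertia_class} that among $C_5,C_{10},\Dic_5,C_5\rtimes C_8$ (of orders $5,10,20,40$) only $C_5\rtimes C_8$ can surject onto $F_5$, since $\Dic_5\not\simeq F_5$. Your route via tame quotients to narrow down to $\{\Dic_5,\,C_5\rtimes C_8\}$ and then exclude $\Dic_5$ by a separate contradiction is redundant. More importantly, the sentence ``\ref{subs:red_torsion} says that $L/M$ is either trivial or totally ramified quadratic'' is not what \ref{subs:red_torsion} gives: it only yields $|\rho_\ell(I_M)|\leq 2$, i.e.\ $e(L/M)\leq 2$, and does not rule out $L/M$ being unramified quadratic, so your conclusion $L=M$ is not justified. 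Fortunately you do not need it: once $L/M$ is unramified you have $I_L=I_M$, hence $\rho_\ell(I_K)=I_K/I_L=I_K/I_M\cong\Gal(M/K)=F_5$ directly (using that $M/K$ is totally ramified), which already contradicts $\rho_\ell(I_K)\cong\Dic_5$.
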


\begin{proof}
Prop.~\ref{prop:inertia_disc_val} gives (1)$\Leftrightarrow$(3), and Prop.~\ref{prop:a-vdelta_even} gives (1)$\Leftrightarrow$(4). Prop.~\ref{prop:explicit_IM} shows that $\rho_\ell(I_K)$ has a quotient isomorphic to the inertia subgroup of $\Gal(M/K)$. Then \ref{subs:inertia_class} shows that (2) implies (3). Suppose (3), then $L/K$ has ramification index $40$. By \ref{subs:red_torsion}, the ramification index of $M/K$ is at least $20$. Statement~(2) now follows from Prop.~\ref{prop:hyperell_irred_5}. 
\end{proof} 

\subsection{Maximal ramification hypothesis}\label{subs:max_inertia_hyp} From now on we suppose that $\rho_\ell(I_K)\simeq C_5\rtimes C_8$. The complex Weil--Deligne representation attached to $\rho_\ell$ is given by the Weil representation $\rho:=\rho_\ell|_{W_K}\otimes_{\Q_\ell}\C$ and the trivial monodromy operator.

\begin{prop}\label{prop:L_tot_ram}
The extension $L/K$ is totally ramified, and $[L:M]=2$, $[M:H]=5$, and $[H:K]=4$.
\end{prop}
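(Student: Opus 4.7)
The plan is to build the tower $K \subset H \subset M \subset L$ step by step, exploiting the oddness of $v_K(\beta)$ together with the inclusion $H \subset M$ provided by Prop.~\ref{prop:root_diff_square}(1).

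First I would establish that $[H:K]=4$ with $H/K$ totally ramified. By Prop.~\ref{prop:hyperell_max_equiv}, $v_K(\Delta)$ is odd; since we have fixed $A_5=1$, this gives $\nu:=v_K(\beta)=v_K(-\Delta(P))$ odd. Hence $\beta\notin(K^\times)^2$, so $K(\beta^{1/2})/K$ is totally ramified of degree $2$, and in $K(\beta^{1/2})$ the element $\beta^{1/2}$ still has odd valuation $\nu$. Running the same argument once more gives $[H:K(\beta^{1/2})]=2$ totally ramified, so $[H:K]=4$ totally ramified.

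Next I would pin down $[M:H]=5$. Prop.~\ref{prop:hyperell_max_equiv} tells us that $\Gal(M/K)\simeq F_5$ and that $M/K$ is totally ramified of degree $20$. Combined with the inclusion $H\subset M$ from Prop.~\ref{prop:root_diff_square}(1), this immediately yields $[M:H]=5$.

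For the top layer, since $H\subset M$ we already have $\beta^{1/4}\in M$, whence $L=MN=M(\beta^{1/8})$. Using that both $M/K$ and $H/K$ are totally ramified, one computes
\[v_M(\beta^{1/4})=[M:H]\cdot v_H(\beta^{1/4})=5\nu,\]
which is odd. Thus $\beta^{1/4}\notin(M^\times)^2$, so $L/M$ is totally ramified of degree $2$. Stringing the three layers together, $L/K$ is totally ramified of degree $40$, which confirms all the claimed indices.

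I do not expect any step to be a serious obstacle: the whole argument is bookkeeping with ramification indices and radical extensions of elements of odd valuation. The one crucial input is the nontrivial inclusion $H\subset M$, but this has already been isolated as Prop.~\ref{prop:root_diff_square}(1). Everything else follows from the congruence $\nu\equiv 1\bmod 2$ supplied by Prop.~\ref{prop:hyperell_max_equiv}.
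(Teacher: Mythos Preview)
Your proof is correct and follows essentially the same route as the paper's: both use Prop.~\ref{prop:root_diff_square}(1) for the inclusion $H\subset M$ and Prop.~\ref{prop:hyperell_max_equiv} for the oddness of $\nu$ and the fact that $M/K$ is totally ramified of degree~$20$. The only cosmetic difference is at the top layer: you verify $[L:M]=2$ by the valuation computation $v_M(\beta^{1/4})=5\nu$ odd, whereas the paper instead observes $N\cap M=H$ (since $[M:H]=5$ and $[N:H]\le 2$ are coprime) and then matches $[L:K]=40$ against the ramification index $e(L/K)=40$ supplied by the maximal-inertia hypothesis.
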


\begin{proof}
It follows from Prop.~\ref{prop:root_diff_square}.(1) that $H\subset N\cap M$. Prop.~\ref{prop:hyperell_max_equiv} shows that $M/K$ is totally ramified of degree $20$ and that $[H:K]=4$. Therefore, $M/H$ is totally ramified of degree $5$, and $N\cap M = H$. From Prop.~\ref{prop:hyperell_max_equiv} we also see that $[N:H]=2$ and that $L/K$ has ramification index $40$, so $[L:K]=[H:K][N:H][M:H]=40$. It follows that $L/K$ is totally ramified. 
\end{proof}

\begin{prop}\label{prop:tot_gal_max}
In the notation of \autocite{group_names}, we have \[\Gal(L(\zeta_8)/K)\simeq\begin{cases}C_5\rtimes C_8 & \text{if $[k_K:\F_5]$ is even,} \\ C^2_2.F_5 &\text{if $[k_K:\F_5]$ is odd.}\end{cases}\]
\end{prop}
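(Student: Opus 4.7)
The plan is to split into two cases according to whether $K$ contains $\zeta_8$. Since $\Q_5(\zeta_8)/\Q_5$ is the unique unramified quadratic extension of $\Q_5$ and $K(\zeta_8)/K$ is at most of degree two, the condition $\zeta_8\in K$ is equivalent to $[k_K:\F_5]$ being even.

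\emph{Case $[k_K:\F_5]$ even.} Here $L(\zeta_8)=L$. By Prop.~\ref{prop:explicit_IM}, $L/K$ is Galois, and by Prop.~\ref{prop:L_tot_ram}, it is totally ramified of degree $40$. Therefore $\Gal(L/K)$ coincides with its own inertia subgroup. Since $L/K$ is IM for $J(C)/K$, the natural surjection $I_K\twoheadrightarrow\Gal(L/K)$ identifies the target with $I_K/I_L=\rho_\ell(I_K)$, which by the maximal ramification hypothesis of \ref{subs:max_inertia_hyp} is isomorphic to $C_5\rtimes C_8$.

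\emph{Case $[k_K:\F_5]$ odd.} Now $K(\zeta_8)/K$ is unramified quadratic. Since $L/K$ is totally ramified (Prop.~\ref{prop:L_tot_ram}), we have $L\cap K(\zeta_8)=K$, so $[L(\zeta_8):K]=80$, and $L(\zeta_8)/K$ is Galois by Prop.~\ref{prop:explicit_IM}. To exhibit $G:=\Gal(L(\zeta_8)/K)$ as an extension of $F_5$ by $C_2^2$, I would use the intermediate field $M$: Prop.~\ref{prop:hyperell_max_equiv} gives that $M/K$ is Galois with $\Gal(M/K)\cong F_5$ and totally ramified of degree $20$. Then $L(\zeta_8)=L\cdot M(\zeta_8)$ is the compositum of the ramified quadratic $L/M$ and the unramified quadratic $M(\zeta_8)/M$, forcing $L\cap M(\zeta_8)=M$ and $\Gal(L(\zeta_8)/M)\cong\Gal(L/M)\times\Gal(M(\zeta_8)/M)\cong C_2^2$. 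This yields the short exact sequence
\[
1\to C_2^2\to G\to F_5\to 1,
\]
placing $G$ among the order-$80$ extensions with kernel $C_2^2$ and quotient $F_5$.

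To pin $G$ down to the specific small group $C_2^2.F_5$ of \autocite{group_names}, I would combine this filtration with a second one: the subgroup $\Gal(L(\zeta_8)/K(\zeta_8))$ is of index $2$ and, by the same argument as in the first case applied to the base $K(\zeta_8)$ (over which $J(C)$ still has maximal wild inertia and $L(\zeta_8)/K(\zeta_8)$ is totally ramified IM), is isomorphic to $C_5\rtimes C_8$. The isomorphism type of $G$ is then determined by the conjugation action of $\Gal(M/K)\simeq F_5$ on the two $C_2$ factors of $\Gal(L(\zeta_8)/M)$ (the tame factor and the Frobenius factor), together with the standard relation $\varphi\tau\varphi^{-1}=\tau^{q_K}$ with $q_K\equiv 5\bmod 8$ on the tame generator $\tau$ of the inertia subgroup $I(L(\zeta_8)/K)\cong C_5\rtimes C_8$. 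The main obstacle I expect is precisely this last identification: verifying that these compatibilities single out the one isomorphism class labeled $C_2^2.F_5$ among the several extensions of $F_5$ by $C_2^2$, which amounts to a careful extension-class computation on the Sylow $2$-subgroup of $G$.
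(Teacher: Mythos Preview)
Your even case is correct and matches the paper. Your setup for the odd case is also right: $G=\Gal(L(\zeta_8)/K)$ sits in a short exact sequence $1\to C_2^2\to G\to F_5\to1$ with $\Gal(L(\zeta_8)/M)\cong C_2^2$, and $G$ contains $C_5\rtimes C_8$ as an index-$2$ subgroup.

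The gap is in the identification step, which you yourself flag as the main obstacle. The route you sketch (computing the conjugation action on the two $C_2$ factors and the Frobenius relation on the tame generator, then doing an extension-class computation on the $2$-Sylow) is workable in principle but is heavier than necessary, and you stop short of carrying it out. The paper finishes this off with two short observations that you already have the ingredients for. First, the extension $1\to C_2^2\to G\to F_5\to1$ is \emph{non-split}: if it were split, the $2$-Sylow of $G$ would be $C_2^2\rtimes C_4$, which has exponent $4$; but your own index-$2$ subgroup $C_5\rtimes C_8$ forces the $2$-Sylow of $G$ to contain an element of order $8$. Second, the extension is \emph{non-central}: if $\Gal(L(\zeta_8)/M)$ were central in $G$, then its subgroup $\Gal(L(\zeta_8)/L)$ would be normal, making $L/K$ Galois, contradicting the fact that $N=K(\beta^{1/8})$ (and hence $L$) is not Galois over $K$ when $\zeta_8\notin K$. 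Among the extensions of $F_5$ by $C_2^2$, ``non-split and non-central'' already singles out the isomorphism class denoted $C_2^2.F_5$ in \autocite{group_names}, so no further computation is needed.
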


\begin{proof}
The inertia subgroup $I(L(\zeta_8)/K)\subset \Gal(L(\zeta_8)/K)$ is isomorphic to $C_5\rtimes C_8$ and has index at most $2$ (from Prop.~\ref{prop:explicit_IM} and Prop.~\ref{prop:L_tot_ram}). 

It remains to show that if $L(\zeta_8)/L$ is nontrivial, then $\Gal(L(\zeta_8)/K)\simeq C^2_2.F_5$. In this case we have $\Gal(L(\zeta_8)/M)\simeq C_2^2$ since $L/M$ is totally ramified of degree $2$. It follows from Prop.~\ref{prop:hyperell_max_equiv} that $\Gal(L(\zeta_8)/K)$ is an extension $G$ of $F_5$ by $C_2^2$. The extension cannot be split, because otherwise $\Gal(L(\zeta_8)/K)$ would have $C_2^2\rtimes C_4$ as a $2$-Sylow subgroup, which has exponent $4$ and therefore has no element of order $8$. In order to identify $\Gal(L(\zeta_8)/K)$ as $C^2_2.F_5$ by using \autocite{group_names} we are left to show that the extension $G$ is non-central, i.e. that the subgroup $C_2^2\subset G$ which identifies with $\Gal(L(\zeta_8)/M)\subset \Gal(L(\zeta_8)/K)$ is non-central. Indeed, $\Gal(L(\zeta_8)/M)$ cannot be central because $\Gal(L/K)$ is non-Galois. \end{proof}

\begin{prop}\label{prop:40_induced} Under the hypothesis of \ref{subs:max_inertia_hyp} the following statements hold :
\begin{enumerate}
\item The representation $\rho$ is irreducible;
\item There exists characters $\chi$ and $\chi'$ of $W_H$ such that 
\begin{equation}\label{eq:restr_fact}
\rho|_{W_H}\simeq\chi\oplus \chi^{-1}(-1)\oplus \chi'\oplus\chi'^{-1}(-1);
\end{equation}
\item If $\chi$ is any of the four direct summands in \eqref{eq:restr_fact}, then \[\rho\simeq \Ind_{W_H}^{W_K}\chi,\] and the Artin conductor $a(\chi)$ is even.
\end{enumerate}
\end{prop}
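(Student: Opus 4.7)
For (1), the plan is to apply Clifford theory to the finite quotient $\rho(I_K)\simeq C_5\rtimes C_8$. Since $\rho$ is wildly ramified, some nontrivial character $\theta$ of $C_5=I_K^{\wild}$ appears in $\rho|_{C_5}$. Clifford theory for the normal subgroup $C_5\trianglelefteq C_5\rtimes C_8$ forces the full $C_8$-orbit of $\theta$ to appear with equal multiplicity; since $C_8$ acts on the four nontrivial characters of $C_5$ through its faithful quotient $C_8/C_2\simeq \Aut(C_5)$, this orbit has size $4$. Combined with $\dim\rho=4$, each nontrivial character of $C_5$ appears in $\rho|_{C_5}$ with multiplicity exactly one, and $\rho|_{I_K}$ is already irreducible. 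A fortiori $\rho$ is irreducible.

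For (2), the key step is to show that $\rho(W_H)$ is abelian. Since $\Q_5$ contains primitive $4$-th roots of unity, so does $K$, and $H=K(\beta^{1/4})$ is Galois over $K$ with $\Gal(H/K)\simeq C_4$, totally and tamely ramified of degree $4$. Consequently $I_H\trianglelefteq I_K$ with $I_K/I_H\simeq C_4$, and $\rho(I_H)$ is the kernel of the natural surjection $C_5\rtimes C_8\twoheadrightarrow \Gal(H/K)\simeq C_4$, isomorphic to $C_5\times C_2$ and hence abelian. As $L/H$ is totally ramified, $\varphi_L\in W_L\subset W_H$ acts on $\overline{k}_H$ as the geometric Frobenius of $H$, so we may take $\varphi_H:=\varphi_L$. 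By Prop.~\ref{prop:almost_abelian}.(1), $\rho(\varphi_L)$ commutes with $\rho(\s)$ and $\rho(\tau^4)$, which topologically generate $\rho(I_H)$, and so $\rho(W_H)$ is abelian. Therefore $\rho|_{W_H}=\chi_1\oplus\chi_2\oplus\chi_3\oplus\chi_4$ for four characters $\chi_i$ of $W_H$. By (1), their restrictions to $I_H^{\wild}=I_K^{\wild}\simeq C_5$ are the four distinct nontrivial characters of $C_5$, so the $\chi_i$ are pairwise distinct. Poincaré duality for $C$ gives $\rho\simeq \rho^{\vee}(-1)$, hence the involution $\chi\mapsto \chi^{-1}(-1)$ permutes $\{\chi_i\}$; since $(\chi_i^{-1}(-1))|_{I_H^{\wild}}=\chi_i^{-1}|_{I_H^{\wild}}$ still has order $5$, no $\chi_i$ is fixed, and the four characters pair up as in the claimed decomposition.

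For (3), fix any summand $\chi$. Frobenius reciprocity yields
\[\Hom_{W_K}(\rho,\Ind_{W_H}^{W_K}\chi)=\Hom_{W_H}(\rho|_{W_H},\chi)\neq 0.\]
Since $\rho$ is irreducible by (1) and $\dim\Ind_{W_H}^{W_K}\chi=[H:K]=4=\dim\rho$, any nonzero morphism is an isomorphism. Serre's conductor-of-induction formula then gives
\[a(\rho)=f(H/K)\cdot a(\chi)+v_K(\mathfrak{d}_{H/K})=a(\chi)+3,\]
using $f(H/K)=1$ and $v_K(\mathfrak{d}_{H/K})=e(H/K)-1=3$ for the tame totally ramified extension $H/K$ of degree $4$. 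By Prop.~\ref{prop:hyperell_max_equiv}.(4), $a(\rho)=a(C/K)$ is odd, whence $a(\chi)$ is even.

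The main obstacle is step (2): identifying $\rho(I_H)$ as $C_5\times C_2$, observing that $\varphi_L$ of Prop.~\ref{prop:almost_abelian} can serve as a Frobenius lift for $W_H$ (which is what upgrades the commutation relations there to abelianity of $\rho(W_H)$), and then combining with the self-duality of $\rho$ to pair the four characters. The remaining parts follow from routine representation theory and the standard Serre conductor formula.
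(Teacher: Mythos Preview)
Your proof is correct and follows essentially the same strategy as the paper: show $\rho|_{I_K}$ is already irreducible, show $\rho(W_H)$ is abelian so that $\rho|_{W_H}$ splits into characters paired by the self-duality $\rho^\vee\simeq\rho(1)$, and then use Frobenius reciprocity together with the conductor formula for induced representations. The only cosmetic differences are that the paper obtains irreducibility of $\rho|_{I_K}$ by quoting that the irreducibles of $C_5\rtimes C_8$ have dimension $1$ or $4$ (your Clifford argument is an explicit version of this), and it deduces abelianness of $\rho(W_H)$ by observing directly that $\Gal(LK^{\unr}/H)\simeq C_{10}\times\widehat{\Z}$ is abelian, rather than invoking Prop.~\ref{prop:almost_abelian} as you do; both routes are equivalent.
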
 

\begin{proof}
We observe that every irreducible representation of $C_5\rtimes C_8$ necessarily has dimension 1 or 4 (see, e.g., \autocite{group_names}). It follows that $\rho|_{I_K}$ is irreducible since it cannot be a direct sum of $1$\mydash{}dimentional representations. Thus, (1) holds. 

The extension $L/H$ is the compositum of the $C_5$-extension $M/H$ and the quadratic extension $N/H$, so $\Gal(L/H)\simeq C_{10}$. It follows that $LK^{\unr}/H$ is abelian. Therefore, $\rho|_{W_H}$ has abelian image and splits into $1$-dimensional factors
\begin{equation}\label{eq:decomp_chi} \rho|_{W_H}\simeq\chi_1\oplus\chi_2\oplus\chi_3\oplus\chi_4.\end{equation}
Frobenius reciprocity gives a nontrivial morphism of representations \[\Ind_{W_H}^{W_K}\chi_1\to \rho.\] Since $\rho$ is irreducible, the morphism is surjective and, in fact, is an isomorphism because $\dim\Ind_{W_H}^{W_K}\chi_1 =4 = \dim\rho$. Thus, (3) holds. 

Since $\rho$ is wildly ramified, by using an explicit construction of the induced representation we observe that $\chi_1$ must be wildly ramified. 

The twisted representation $\rho(\frac{1}{2})$ is symplectic with respect to the Weil pairing, so, in particular, the dual of $\rho$ is $\rho^*\cong \rho(1)$ and $\det\rho=\chi_{\unr}^{-2}$. Then \eqref{eq:decomp_chi} gives \[\rho|_{W_H}\simeq \left(\rho|_{W_H}\right)^*(-1)\simeq\chi_1^{-1}(-1) \oplus \chi_2^{-1}(-1)\oplus \chi_3^{-1}(-1)\oplus \chi_4^{-1}(-1).\] The wild ramification of $\chi_1$ implies that $\chi_1\not\simeq \chi_1^{-1}(-1)$, so we may suppose that $\chi_2\simeq\chi_1^{-1}(-1)$. We then have $\chi_4\simeq\chi_3^{-1}(-1)$. Posing $\chi=\chi_1$ and $\chi'=\chi_3$ gives (2).

By Prop.~\ref{prop:hyperell_max_equiv}, $a(C/K)$ is odd. Since $H/K$ is totally tamely ramified of degree $4$, from \autocite[\S10.(a2)]{rohrlich} we have $a(C/K)=a(\rho)=a(\chi)+3$.\end{proof}

\begin{prop}\label{prop:rat_4-tors}
Each point of $J(C)[4]$ is rational over $L(\zeta_8)$.
\end{prop}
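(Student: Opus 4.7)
The plan is to reduce the statement to a citation of Yelton's explicit description of the splitting field of the $4$-torsion of hyperelliptic Jacobians (gratefully mentioned in the acknowledgments) and then verify the hypotheses using the material already assembled in Section~\ref{sect:spec_f}.

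First I would fix a defining equation $Y^2=P(X)$ for $C/K$ with $P$ unitary and irreducible of degree $5$, as supplied by Prop.~\ref{prop:hyperell_eq_spec}, and write $\a_1,\ldots,\a_5\in M$ for its roots. Yelton's theorem (in the form used for hyperelliptic Jacobians defined by a unitary polynomial of odd degree at least $5$) asserts that the field of definition of $J(C)[4]$ is contained in
\[
F := K\bigl(\a_1,\ldots,\a_5,\ \{\sqrt{\a_i-\a_j}\}_{i\neq j},\ \zeta_4\bigr).
\]
Thus it suffices to show the inclusion $F\subseteq L(\zeta_8)$.

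The splitting field of $P$ is precisely $M$ (see the discussion before Prop.~\ref{prop:explicit_IM}), and by Prop.~\ref{prop:L_tot_ram} we have $M\subset L\subset L(\zeta_8)$, so all $\a_i$ lie in $L(\zeta_8)$. The primitive fourth root of unity is $\zeta_4=\zeta_8^{2}\in L(\zeta_8)$. Finally, Prop.~\ref{prop:root_diff_square}(2) gives $\sqrt{\a_i-\a_j}\in L(\zeta_8)$ for every pair $i\neq j$. Combining these three observations yields $F\subseteq L(\zeta_8)$, and hence $J(C)[4]\subset J(C)(L(\zeta_8))$.

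The main point on which one has to be careful is matching Yelton's hypotheses to our Weierstrass model: the hypothesis that the leading coefficient of $P$ is a square (satisfied here since $P$ is monic) and the fact that only square roots of \emph{differences} of Weierstrass roots, not of the roots themselves, are needed. Once those are verified, the proof amounts to the short chain of inclusions above, so no genuinely new argument beyond Prop.~\ref{prop:root_diff_square} is required.
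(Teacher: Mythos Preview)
Your proposal is correct and matches the paper's own proof almost line for line: both fix the monic degree-$5$ equation of Prop.~\ref{prop:hyperell_eq_spec}, invoke Yelton's description of $K(J(C)[4])$ as the field obtained from the splitting field of $P$ by adjoining $\sqrt{-1}$ and all $\sqrt{\a_i-\a_j}$, and finish with Prop.~\ref{prop:root_diff_square}(2). The only cosmetic difference is that the paper sets up the base field as $\widetilde{M}=\Q(\sqrt{-1},\a_1,\dots,\a_5)$ before citing Yelton, whereas you package $\zeta_4$ directly into $F$; also note that $M\subset L$ holds by the definition $L=MN$ in \ref{subs:A_5_J_10}, so the appeal to Prop.~\ref{prop:L_tot_ram} is unnecessary.
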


\begin{proof}
Let $Y^2=P(X)$ be as in Prop.~\ref{prop:hyperell_eq_spec}, and let $\a_1,\ldots,\a_5\in\overline{K}$ be the roots of $P$, so that $M=K(\a_1,\ldots,\a_5)$. Let $\widetilde{M}:=\Q(\sqrt{-1},\a_1,\ldots,\a_5)\subset M$. Then, $C$ and $J(C)$ are defined over $\widetilde{M}$, and it follows from \autocite[Remark~4.2]{yelton_4-tors} that 
\[\widetilde{M}(J(C)[4])=\widetilde{M}\left(\left(\sqrt{\a_i-\a_j}\right)_{i<j}\right).\] The proposition now follows from Prop.~\ref{prop:root_diff_square}.(2).
\end{proof}

\begin{cor}\label{cor:twisted_trivial}
The map $\rho(\varphi_{L(\zeta_8)})$ is given as multiplication by the scalar $ \sqrt{q_{L(\zeta_8)}}$. As an immediate consequence, the twisted representation $\rho(\frac{1}{2})$ is trivial on $W_{L(\zeta_8)}$.
\end{cor}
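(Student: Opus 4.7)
The plan is to identify $\rho(\varphi_{L(\zeta_8)})$ by combining three pieces of information: its centrality in $\rho(W_K)$, its determinant, and its reduction modulo $4$.

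First, Prop.~\ref{prop:almost_abelian} shows that $\rho(\varphi_{L(\zeta_8)})$ is central in $\rho(W_K)$, while Prop.~\ref{prop:40_induced}.(1) shows that $\rho$ is irreducible. Schur's lemma then yields $\rho(\varphi_{L(\zeta_8)}) = \lambda \cdot \Id$ for some $\lambda \in \C^\times$.

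Next, I would pin down $\lambda^2$. Since $J(C)$ has good reduction over $L \subseteq L(\zeta_8)$, the characteristic polynomial of $\rho(\varphi_{L(\zeta_8)})$ is the Frobenius polynomial of the good-reduction abelian surface $J(C)_{L(\zeta_8)}$ and therefore lies in $\Z[X]$. As this polynomial equals $(X-\lambda)^4$, one forces $\lambda \in \Z$. On the other hand, from $\det\rho = \chi_{\unr}^{-2}$ (Prop.~\ref{prop:40_induced}) one obtains $\lambda^4 = q_{L(\zeta_8)}^2$, and combined with $\lambda^2 \geq 0$ this gives $\lambda^2 = q_{L(\zeta_8)}$; in particular $q_{L(\zeta_8)}$ is a perfect square and $\sqrt{q_{L(\zeta_8)}} \in \Z_{>0}$.

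To fix the sign in $\lambda = \pm \sqrt{q_{L(\zeta_8)}}$, I would take $\ell = 2$ and invoke Prop.~\ref{prop:rat_4-tors}. The standard isomorphism of $\Ga_K$-modules $H^1_{\et}(C_{\overline K}, \Z/4\Z) \cong \Hom(J(C)[4], \Z/4\Z)$, together with the trivial action of $\Ga_{L(\zeta_8)}$ on $J(C)[4]$, forces $\rho_2(\varphi_{L(\zeta_8)}) \equiv \Id \pmod{4}$. Thus $\lambda \equiv 1 \pmod{4}$ in $\Z_2$, hence in $\Z$. Since $5 \equiv 1 \pmod{4}$, we have $\sqrt{q_{L(\zeta_8)}} \equiv 1 \pmod{4}$ whereas $-\sqrt{q_{L(\zeta_8)}} \equiv 3 \pmod{4}$, so $\lambda = \sqrt{q_{L(\zeta_8)}}$.

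The consequence is immediate: $\chi_{\unr}^{1/2}(\varphi_{L(\zeta_8)}) = q_{L(\zeta_8)}^{-1/2}$, so $\rho(\tfrac{1}{2})(\varphi_{L(\zeta_8)}) = \sqrt{q_{L(\zeta_8)}}\cdot q_{L(\zeta_8)}^{-1/2}\cdot \Id = \Id$. By Prop.~\ref{prop:tot_gal_max} the extension $L(\zeta_8)/L$ is trivial or unramified of degree $2$, so $I_{L(\zeta_8)} = I_L \subseteq \ker\rho$, and since $\chi_{\unr}^{1/2}$ is unramified, $\rho(\tfrac{1}{2})$ is trivial on $I_{L(\zeta_8)}$ as well. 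Since $W_{L(\zeta_8)}$ is topologically generated by $I_{L(\zeta_8)}$ and $\varphi_{L(\zeta_8)}$, we conclude that $\rho(\tfrac{1}{2})|_{W_{L(\zeta_8)}}$ is trivial. The main obstacle is the mod-$4$ sign step: one must cleanly translate the rationality of $J(C)[4]$ over $L(\zeta_8)$ into a mod-$4$ congruence on the (integer) Frobenius scalar, and exploit the arithmetic fact that $5 \equiv 1 \pmod{4}$ to distinguish $+\sqrt{q_{L(\zeta_8)}}$ from $-\sqrt{q_{L(\zeta_8)}}$.
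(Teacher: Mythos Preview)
Your argument is correct and follows the same three-step outline as the paper's proof: centrality (Prop.~\ref{prop:almost_abelian}) plus irreducibility gives a scalar $\lambda$, one then pins it down to $\lambda=\pm\sqrt{q_{L(\zeta_8)}}$, and finally Prop.~\ref{prop:rat_4-tors} together with $5\equiv 1\bmod 4$ fixes the sign. The only variation is in the middle step: the paper reads off $z=z^{-1}q_{L(\zeta_8)}$ directly from the paired form of the decomposition \eqref{eq:restr_fact} (since $\chi$ and $\chi^{-1}(-1)$ must take the same value on $\varphi_{L(\zeta_8)}$), whereas you reach $\lambda^2=q_{L(\zeta_8)}$ via integrality of the Frobenius characteristic polynomial and $\det\rho=\chi_{\unr}^{-2}$.
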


\begin{proof} 
Since $\rho(\varphi_{L(\zeta_8)})$ is central in $\Img(\rho)$ by Prop.~\ref{prop:almost_abelian}, it acts as multiplication by a scalar $z\in\C^\times$. From \eqref{eq:restr_fact} we see that $z=z^{-1}q_{L(\zeta_8)}$, so $z=\pm\sqrt{q_{L(\zeta_8)}}$. We note that $\sqrt{q_{L(\zeta_8)}}$ is always an integral power of $5$, thus, in particular, $z\equiv\pm1\bmod4$. On the other hand, Prop.~\ref{prop:rat_4-tors} implies that $\rho_2(\varphi_{L(\zeta_8)})\in\Aut_{\Z_2}\left(H^1_{\et}(C_{\overline{K}},\Z_2)\right)$ satisfies $\rho_2(\varphi_{L(\zeta_8)})\equiv \Id{}\bmod4$. We therefore conclude that $z=\sqrt{q_{L(\zeta_8)}}$.
\end{proof}

\section{Computation of root numbers}\label{sect:max_proof}

We assume the hypotheses of \ref{subs:5-adic_wild} and \ref{subs:max_inertia_hyp} and prove our main result.

\begin{thm}\label{thm:max_ramif_rootN}
Let $a_6$ be as in Prop.~\ref{prop:hyperell_eq_spec}, and let $\Delta$ be the discriminant associated to any Weierstrass equation defining $C/K$. The root number of $C/K$ is given by
\[w(C/K)=(-1)^{[k_K:\F_5]+1}\cdot\lege{v_K(a_6)}{k_K}\cdot (\Delta,a_6)_K.\]
\end{thm}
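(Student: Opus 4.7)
The plan is to combine the induction $\rho \simeq \Ind_{W_H}^{W_K}\chi$ from Prop.~\ref{prop:40_induced}.(3) with the explicit gauge formula Cor.~\ref{cor:char_rootN_D} for wildly ramified characters of even Artin conductor. Since $a(\chi)$ is even and $\chi$ has wild inertia of order $5$, its epsilon factor is controlled by a $D$-invariant of the form $N_{M/H}(1-\sigma(\alpha)/\alpha)$ for a well-chosen $\alpha\in\O_M$ with $v_M(\alpha)$ coprime to $5$. The three factors in the target formula should arise from three distinct sources: (i) the tame induction factor from $H/K$, producing $(-1)^{[k_K:\F_5]+1}$; (ii) the central Frobenius action captured by Cor.~\ref{cor:twisted_trivial}, producing $\lege{v_K(a_6)}{k_K}$; and (iii) the gauge of $\chi$ followed by Artin reciprocity, producing the Hilbert symbol $(\Delta,a_6)_K$.

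First, I would invoke Deligne's inductivity of epsilon factors to write
\[\epsilon(\rho,\psi_{k_K},\diff x) = \lambda_{H/K}(\psi_{k_K})\cdot \epsilon(\chi, \psi_{k_K}\circ\Tr_{H/K}, \diff x_H),\]
where $\lambda_{H/K}$ depends only on the tame totally ramified extension $H/K$ of degree $4$ (Prop.~\ref{prop:L_tot_ram}). Since $4$ is prime to $p=5$, the three nontrivial characters of $\Gal(H/K)\simeq C_4$ split $\lambda_{H/K}$ into a product of three tame epsilon factors. The quadratic factor contributes a normalized Gauss sum $G_{\psi_{k_K}}$, whose sign is $-(-1)^{[k_K:\F_5]}$ exactly as in the proof of Thm.~\ref{thm:eps_gauges}.(2), and the two quartic characters pair up into complex conjugates whose product contributes a Hilbert symbol on the discriminant of $H/K$.

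Next, I would apply Cor.~\ref{cor:char_rootN_D} to $\chi$ with the choice $\alpha=\alpha_1\in\O_M$, a root of the polynomial $P$ from Prop.~\ref{prop:hyperell_eq_spec}. Since $P$ is unitary and $v_K(a_6)$ is coprime to $5$, the valuation $v_M(\alpha_1)$ is coprime to $5$ as required, and
\[D_{\alpha_1,\chi} = N_{M/H}\!\left(\frac{\alpha_1-\alpha_2}{\alpha_1}\right).\]
Pushing forward the norm, $N_{H/K}(D_{\alpha_1,\chi}) = N_{M/K}(\alpha_1-\alpha_2)\cdot N_{M/K}(\alpha_1)^{-1}$; here $N_{M/K}(\alpha_1)=\pm a_6$ by Vieta and the $\Gal(M/K)$-orbit sum in the numerator is controlled by the product formula \eqref{eq:disc_unitary} for $\Delta$. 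To convert $\chi$ evaluated on these norms into a Hilbert symbol on $K^\times$, I would use Prop.~\ref{prop:root_diff_square} to control the square classes of the $\alpha_i-\alpha_j$ inside $L(\zeta_8)$, combined with the Artin reciprocity identity $\chi\circ N_{H/K} = \chi|_{K^\times}$ after restriction (viewing $\chi$ as a quasi-character of $H^\times$).

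Finally, the central Frobenius scalar from Cor.~\ref{cor:twisted_trivial}, $\rho(\varphi_{L(\zeta_8)})=\sqrt{q_{L(\zeta_8)}}\cdot\Id$, pins down $\chi(\varphi_{L(\zeta_8)})$ and hence $\chi$ on the unramified part of $W_H$; this yields the factor $\lege{v_K(a_6)}{k_K}$ after using the explicit local reciprocity identity \eqref{eq:explicit_lcft} from the proof of Lemma~\ref{lem:delta_gauge}. The main obstacle will be the bookkeeping: assembling the tame $\lambda$-factor, the wild gauge contribution, and the Frobenius scalar into the clean product formula, while tracking all signs through the $\approx$ relation (which collapses to equality since both sides lie in $\{\pm1\}$ and $p=5\neq 2$). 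The subtlest point is the parity of $[k_K:\F_5]$, which, via Prop.~\ref{prop:tot_gal_max}, controls whether $\zeta_8\in K^{\unr}$ and hence governs the sign $(-1)^{[k_K:\F_5]+1}$; distinguishing these two cases when combining the tame quadratic Gauss sum with the wild contribution is the most delicate calculation.
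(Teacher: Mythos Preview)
Your overall architecture matches the paper's: reduce to $w(\chi,\psi_k\circ\Tr_{H/K})\cdot w(\Ind_{W_H}^{W_K}\1,\psi_k)$ via inductivity, and compute $w(\chi)$ from Cor.~\ref{cor:char_rootN_D} with $\alpha=\alpha_1$. But your attribution of the three factors to their sources is scrambled, and two steps are genuinely wrong or missing.

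\emph{Tame factor.} The $\lambda$-factor $w(\Ind_{W_H}^{W_K}\1,\psi_k)$ is simply $-1$, independent of $[k_K:\F_5]$ (Lemma~\ref{lem:root_n_ind_id_4}). You correctly extract $-(-1)^{[k_K:\F_5]}$ from the quadratic Gauss sum, but the quartic pair $\chi_4\oplus\chi_4^{-1}$ does not give ``a Hilbert symbol on the discriminant of $H/K$'': it gives $\chi_4(\theta_K(-1))=(-1)^{[k_K:\F_5]}$, which exactly cancels the parity dependence. So the sign $(-1)^{[k_K:\F_5]+1}$ does \emph{not} come from the tame side.

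\emph{Wild factor.} The identity ``$\chi\circ N_{H/K}=\chi|_{K^\times}$'' is not well-formed and in any case false; $N_{H/K}$ goes the wrong way. What the paper actually does is split $D_{\alpha_1,\chi}/v_M(\alpha_1)$ into two pieces. The first, $N_{M/H}(\alpha_1)\cdot v_M(\alpha_1)=-4v_K(a_6)a_6$, lies in $K^\times$, and $\chi\circ\theta_H$ restricted to $K^\times$ is computed via Deligne's \emph{determinant formula} $\det\rho=\chi_{\unr}^{-2}=\det(\Ind\1)\cdot(\chi\circ t)$; this yields $\chi_4^2\circ\theta_K=(\Delta,\cdot)_K$ on that element and produces \emph{both} $\lege{v_K(a_6)}{k_K}$ and $(\Delta,a_6)_K$ simultaneously. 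The second piece, $d_{\alpha_1,\chi}=N_{M/H}(\alpha_1-\alpha_2)$, is \emph{not} in $K^\times$, and evaluating $\chi\circ\theta_H$ on it is the source of $(-1)^{[k_K:\F_5]}$. In the even case Prop.~\ref{prop:root_diff_square} plus Cor.~\ref{cor:twisted_trivial} show $d_{\alpha_1,\chi}\in N_{L/H}(L^\times)$ and $\chi(\tfrac12)$ vanishes there. In the odd case the paper needs something your proposal never invokes: the Artin--Schreier trace computations of Prop.~\ref{prop:repr_traces} (Lefschetz fixed-point counts on $C_{1,0}$) to pin down $\chi(\varphi_L)=-\lege{ar}{k_L}\sqrt{q_K}$, combined with Prop.~\ref{prop:diff_square}.(2) to realize $\alpha_1-\alpha_2$ as a norm from $L$. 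Without this input you cannot determine the sign in the odd-degree case, and your plan to get $\lege{v_K(a_6)}{k_K}$ from the central Frobenius alone will not close.
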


Let $\psi_k:K\to\C^\times$ be the additive character from \ref{subs:add_char_ex}. For the general theory and the formulas of root numbers the reader may refer to \autocite{rohrlich}.

\subsection{Root number of an induced representation}\label{subs:root_induced} We have $\rho=\Ind_{W_H}^{W_K}\chi$ from Prop.~\ref{prop:40_induced}, so the formula \autocite[\S11.($\epsilon$2)]{rohrlich} gives 
\begin{equation}\label{eq:root_n_ind_formula} w(C/K)=w(\chi,\psi_k\circ \Tr_{K/H})\cdot w(\Ind_{W_H}^{W_K}\1,\psi_k). \end{equation}

\begin{lem}\label{lem:root_n_ind_id_4}
We have $w(\Ind_{W_H}^{W_K}\1,\psi_k)=-1$.
\end{lem}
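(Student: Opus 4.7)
The approach is to decompose the induced representation into characters of the cyclic extension $H/K$ and reduce each individual $\epsilon$-factor to a Gauss sum on the residue field.

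First, since $5\equiv 1\bmod 4$, the field $\Q_5$ and hence $K$ contains a primitive fourth root of unity. Consequently $H=K(\beta^{1/4})$ is Galois over $K$. By Prop.~\ref{prop:L_tot_ram} the extension $H/K$ is totally ramified of degree~$4$, and since $p=5\nmid 4$ it is tame, hence $\Gal(H/K)\cong C_4$. Choose a faithful character $\chi$ of $\Gal(H/K)$ and inflate it to $W_K$. Then, as Weil representations,
\begin{equation*}
\Ind_{W_H}^{W_K}\1\;\simeq\;\1\oplus\chi\oplus\chi^2\oplus\chi^3,
\end{equation*}
so by the multiplicativity of $\epsilon$-factors on direct sums,
\begin{equation*}
w(\Ind_{W_H}^{W_K}\1,\psi_k)=\prod_{i=0}^{3}w(\chi^i,\psi_k).
\end{equation*}

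Next, I would evaluate the four factors. Clearly $w(\1,\psi_k)=1$. Each $\chi^i$ with $i\in\{1,2,3\}$ is tamely ramified with $a(\chi^i)=1$, so using $n(\psi_k)=-1$ the $\epsilon$-factor reduces to a Gauss sum on $k_K^\times$,
\begin{equation*}
\epsilon(\chi^i,\psi_k,\diff x)=\vol(U_K^1,\diff x)\cdot\sum_{u\in k_K^\times}\overline{\chi}^{-i}(u)\,\psi_k(\tau(u)),
\end{equation*}
where $\overline{\chi}$ is the character on $k_K^\times$ induced by $\chi$ and $\tau$ is the Teichmüller lift. Applying the standard identity $g(\eta)g(\eta^{-1})=\eta(-1)q_K$ for nontrivial $\eta$ on $k_K^\times$ with $\eta=\overline{\chi}^{-1}$ yields
\begin{equation*}
w(\chi,\psi_k)\cdot w(\chi^3,\psi_k)=\overline{\chi}(-1).
\end{equation*}

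It remains to pin down $w(\chi^2,\psi_k)$ and $\overline{\chi}(-1)$. The order-two character $\chi^2$ factors through $\Gal(K(\beta^{1/2})/K)$ and hence by local class field theory satisfies $\chi^2(x)=(\beta,x)_K$; since $v_K(\beta)$ is odd by Prop.~\ref{prop:hyperell_max_equiv}, $\chi^2$ restricts on $\O_K^\times$ to the Legendre symbol, so $w(\chi^2,\psi_k)$ is the sign of the quadratic Gauss sum $G_{\psi_k}$. That sign has already been computed in the proof of Thm.~\ref{thm:eps_gauges} via Hasse--Davenport and the Gauss evaluation $G_{\F_5}=\sqrt{5}$, giving $w(\chi^2,\psi_k)=(-1)^{[k_K:\F_5]+1}$. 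For $\overline{\chi}(-1)$: since $\overline{\chi}$ has exact order~$4$, $\overline{\chi}(-1)=1$ iff $-1$ is a fourth power in $k_K^\times$, iff $8\mid q_K-1$; as $5^f\equiv 1\bmod 8$ exactly when $f$ is even, $\overline{\chi}(-1)=(-1)^{[k_K:\F_5]}$. Combining everything,
\begin{equation*}
w(\Ind_{W_H}^{W_K}\1,\psi_k)=1\cdot(-1)^{[k_K:\F_5]}\cdot(-1)^{[k_K:\F_5]+1}=-1.
\end{equation*}

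The entire argument is routine tame $\epsilon$-factor manipulation; the only point requiring care is the sign of the quadratic Gauss sum, which however is already at our disposal from Section~\ref{sect:wild_char_rootN}.
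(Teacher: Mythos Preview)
Your proof is correct and follows essentially the same route as the paper's: decompose $\Ind_{W_H}^{W_K}\1$ as $\1\oplus\chi\oplus\chi^2\oplus\chi^3$ for a faithful character $\chi$ of $\Gal(H/K)\cong C_4$, compute $w(\chi)w(\chi^{-1})=\chi(-1)=(-1)^{[k_K:\F_5]}$ and $w(\chi^2)=(-1)^{[k_K:\F_5]+1}$ via the quadratic Gauss sum, and multiply. The only cosmetic difference is that the paper obtains $w(\chi\oplus\chi^{-1})=\chi(\theta_K(-1))$ by citing Rohrlich's self-duality lemma rather than the Gauss-sum identity $g(\eta)g(\eta^{-1})=\eta(-1)q_K$, and it evaluates $w(\chi^2)$ by invoking the Abbes--Saito formula (8.7.1) rather than reducing directly to $G_{\psi_k}$; the underlying computations are identical.
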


\begin{proof} The representation $\Ind_{W_H}^{W_K}\1$ is isomorphic to the regular representation of $\Gal(H/K)\simeq C_4$. Let $\chi_4:W_K\to \C^\times$ denote a totally ramified character of order $4$ such that $\ker\chi_4=W_{H}$. We then have a decomposition \begin{equation}\label{eq:ind_1_dec} \Ind_{W_H}^{W_K}\1\simeq \1\oplus \chi_4^2\oplus\chi_4\oplus \chi_4^{-1},\end{equation} and thus multiplicativity of root numbers \autocite[\S11.($\epsilon$1)]{rohrlich} gives 
\[w(\Ind_{W_H}^{W_K}\1,\psi_k)=w(\chi_4^2,\psi_k)\cdot w(\chi_4\oplus \chi_4^{-1},\psi_k).\]
The properties from \autocite[\S12~Lemma]{rohrlich} give \[w(\chi_4\oplus \chi_4^{-1},\psi_k)=\chi_4(\theta_K(-1)),\] where $\theta_K$ is Artin's reciprocity map. We have $\chi_4(\theta_K(-1))=1$ exactly when $-1$ is a 4th power in $K^\times$, so \[w(\chi_4\oplus \chi_4^{-1},\psi_k)=(-1)^{[k_K:\F_5]}.\] In order to compute $w(\chi_4^2,\psi_k)$ we apply the formula \autocite[\nopp (8.7.1)]{abbes_saito} with $\beta=1$ there and $\tau(\chi_4^2,\psi_k)=-G_{[k_K:\F_5]}(\chi_4^2)=(-\sqrt{p})^{[k_K:\F_5]}$ (we use \autocite[Thm.~11.5.2]{gauss_sums_book}), which gives \[w(\chi_4^{2},\psi_k)=(-1)^{[k_K:\F_5]+1},\] thus completing the proof of the lemma. 
\end{proof}

\subsection{Connection with a Weierstrass equation}\label{subs:gauge_weierstrass_eq} Let $Y^2=P(X)$ be the Weierstrass equation for $C/K$ from Prop.~\ref{prop:hyperell_eq_spec}. We fix a root $\a_1\in M$ of $P$, then $M=H(\a_1)$. Let $\chi$ be as in Prop.~\ref{prop:40_induced}, and let $\s\in I_H$ be an element such that 
\begin{equation}\label{eq:chi_sigma_img}\chi(\s)=e^{\frac{2\pi i}{5}}.\end{equation}
It follows that $\s$ restricts to a generator of $\Gal(M/H)\simeq C_5$. Let $\a_j:=\s^{j-1}(\a_1)$ be the roots of $P$, and let $d_{\a_1,\chi}:=\norm_{M/H}(\a_1-\a_2).$ We have $\norm_{M/H}(\a_1)=-a_6$ and \begin{equation}v_M(\a_1)=v_H(\norm_{M/H}(\a_1))=v_H(a_6)=4v_K(a_6).\end{equation} Since $a(\chi)$ is even by Prop.~\ref{prop:40_induced}.(3), applying Cor.~\ref{cor:char_rootN_D} (with $K=H$ there) gives (recall the notation $\approx$ from \ref{subs:root_n_char})
\begin{align}
w(\chi,\psi_k\circ\Tr_{H/K})& \approx \chi\circ\theta_H\left(v_M(\a_1)\cdot \norm_{M/H}(\a_1)\right)^{-1}\cdot \chi\circ\theta_H(d_{\a_1,\chi}) \nonumber \\
& \approx \chi\circ\theta_H(-4v_K(a_6)a_6)^{-1}\cdot \chi\circ\theta_H(d_{\a_1,\chi}). \label{eq:chi_rootN_inter}
\end{align}

Recall that $\det\rho=\chi_{\unr}^{-2}$. Let $t:W_K^{\ab}\to W_H^{\ab}$ be the transfer map. Deligne's determinant formula \autocite[508]{deligne_eq_fonctionelle} gives 
\[ \chi_{\unr}^{-2}=\det\Ind_{W_H}^{W_K}\chi=\det\Ind_{W_H}^{W_K}\1 \cdot \chi\circ t. \] Composing with $\theta_K$ and taking into account the decomposition \eqref{eq:ind_1_dec} gives 
\[||\cdot||^{-2}_K=\chi_4^2\circ\theta_K \cdot (\chi\circ\theta_H)|_{K^\times}.\]
Since $-4v_K(a_6)a_6\in K^\times$ and $||\cdot||_K\approx 1$, the above gives 
\begin{equation}\label{eq:chi_on_K} \chi\circ\theta_H(-4v_K(a_6)a_6) \approx \chi_4^2\circ\theta_K(-4v_K(a_6)a_6).\end{equation}

Since $-\b$ is a norm from $K(\sqrt{\b})$, we have  $\chi_4^2\circ\theta_K(-\b)=1$. Therefore, $\chi_4^2\circ\theta_K$ is equal to the Hilbert symbol $(\beta,\cdot)_K$, since both are quadratic ramified characters trivial on $-\beta$. Since $\beta$ differs from any discriminant $\Delta$ of $C/K$ by a square in $K^\times$, we have $(\beta,\cdot)_K=(\Delta,\cdot)_K$. Applying this to \eqref{eq:chi_on_K} together with the formula \autocite[V.(3.4)]{neukirch} gives 
\begin{equation}\label{eq:chi_rootN_a6} \chi\circ\theta_H(-4v_K(a_6)a_6)\approx \lege{v_K(a_6)}{k_K}\cdot(\Delta,a_6)_K.\end{equation}
Plugging \eqref{eq:chi_rootN_a6} into \eqref{eq:chi_rootN_inter} we obtain
\begin{equation}\label{eq:chi_rootN_a6_m} w(\chi,\psi_k\circ \Tr_{H/K})\approx\lege{v_K(a_6)}{k_K}\cdot(\Delta,a_6)_K\cdot \chi\circ\theta_H(d_{\a_1,\chi}).\end{equation}

\begin{lem}\label{lem:chi_d_even}
If $[k_K:\F_5]$ is even, then $\chi\circ\theta_H(d_{\a_1,\chi})\approx 1.$
\end{lem}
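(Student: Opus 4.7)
The plan is to realize $d_{\a_1,\chi}$ as a norm from $L^\times$, so that $\chi\circ\theta_H(d_{\a_1,\chi})$ reduces, via the standard compatibility $\theta_H\circ\norm_{L/H}=(W_L\hookrightarrow W_H)_*\circ\theta_L$ of local class field theory, to an evaluation of $\chi|_{W_L}$. Since $\chi$ is a summand of $\rho|_{W_H}$ and $\rho(I_L)=1$, this restriction is unramified, and Cor.~\ref{cor:twisted_trivial} pins down $\chi(\varphi_L)=\sqrt{q_L}$ (noting that $L(\zeta_8)=L$, because $[k_K:\F_5]$ even forces $\zeta_8\in\F_{25}\subset k_K$ and hence, by Hensel, $\zeta_8\in K$). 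The same hypothesis also supplies $i:=\zeta_8^2\in K\subset H$.

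To produce the norm, by Prop.~\ref{prop:root_diff_square}.(2) we write $\a_1-\a_2=x^2$ for some $x\in L^\times$. Expanding $\Delta(P)=2^8\prod_{i<j}(\a_i-\a_j)^2$ and using $\beta=-\Delta(P)$ (as $A_5=1$) together with the common-value statement of Prop.~\ref{prop:diff_square}.(1) yields $v_M(\a_i-\a_j)=\nu$ for all $i\neq j$, whence $v_L(x)=\nu$. Since $\nu$ is odd (Prop.~\ref{prop:hyperell_max_equiv}) but $v_L(M^\times)=2\Z$, we have $x\notin M$; the nontrivial element of $\Gal(L/M)$ therefore sends $x$ to $-x$ (the only other square root of $\a_1-\a_2$ in $L$), giving $\norm_{L/M}(x)=-x^2=-(\a_1-\a_2)$ and then, applying $\norm_{M/H}$, $\norm_{L/H}(x)=-d_{\a_1,\chi}$. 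The minus sign is absorbed by the identity $\norm_{L/H}(i)=i^{[L:H]}=i^{10}=-1$ (using $i\in H$ and $[L:H]=10$ from Prop.~\ref{prop:L_tot_ram}), yielding $d_{\a_1,\chi}=\norm_{L/H}(ix)$.

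Combining the unramified character $\chi|_{W_L}$ with $\chi(\varphi_L)=\sqrt{q_L}$ and $v_L(ix)=\nu$, we conclude
\begin{equation*}
\chi\circ\theta_H(d_{\a_1,\chi})=\chi|_{W_L}(\theta_L(ix))=\chi(\varphi_L)^{v_L(ix)}=\sqrt{q_L}^{\,\nu}\in\R_{>0},
\end{equation*}
so $\chi\circ\theta_H(d_{\a_1,\chi})\approx 1$. The main conceptual step is the parity argument producing the sign $-1$ in $\norm_{L/H}(x)=-d_{\a_1,\chi}$: the oddness of $\nu$ forces $x\notin M$, and this sign can be absorbed into a norm only because $i\in H$, which holds precisely under the evenness hypothesis of the lemma.
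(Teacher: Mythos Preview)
Your proof is correct and follows essentially the same route as the paper: both realize $d_{\a_1,\chi}$ as a norm from $L=L(\zeta_8)$ via Prop.~\ref{prop:root_diff_square}.(2) and then invoke Cor.~\ref{cor:twisted_trivial} to conclude that $\chi\circ\theta_H(d_{\a_1,\chi})$ is a positive real power of $q_L$ (the paper phrases this as $\chi(\tfrac12)\circ\theta_H$ vanishing on $\norm_{L/H}(L^\times)$, which is the same thing). One small slip in your closing commentary: $i=\sqrt{-1}\in\Q_5\subset H$ holds unconditionally, so the sign can always be absorbed; the evenness hypothesis enters only through $L(\zeta_8)=L$, which you already used correctly for both the square-root and the Frobenius value.
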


\begin{proof}
Here we have $L(\zeta_8)=L$. Then Prop.~\ref{prop:root_diff_square}.(2) implies that \[\a_1-\a_2\in\norm_{L(\zeta_8)/M}(L(\zeta_8)^\times),\] thus $d_{\a_1,\chi}$ is a norm from $L(\zeta_8)^\times$. Since $\rho(\frac{1}{2})=\Ind_{W_H}^{W_K}(\chi(\frac{1}{2}))$, Cor.~\ref{cor:twisted_trivial} implies that $\chi(\frac{1}{2})\circ\theta_H$ is trivial on $\norm_{L(\zeta_8)/H}(L(\zeta_8)^\times)$. Then,
\begin{equation} \chi\circ\theta_H(d_{\a_1,\chi})=||d_{\a_1,\chi}||_H^{-\frac{1}{2}} \cdot \left(\chi({\textstyle\frac{1}{2}})\circ\theta_H\right)(d_{\a_1,\chi})=||d_{\a_1,\chi}||_H^{-\frac{1}{2}}\approx 1. \qedhere \end{equation}
\end{proof}

\begin{lem}\label{lem:chi_gauss_frob}
Suppose that $[k_K:\F_5]$ is odd. Let $a\in k_L$ and $r\in\F_5$ be associated to $\s$ as in Prop.~\ref{prop:iso_Ca0}. Then for every geometric Frobenius lift $\varphi_L\in W_L$, we have \[\chi(\varphi_L)=-\lege{ar}{k_L}\sqrt{q_K}.\]
\end{lem}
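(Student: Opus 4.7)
The approach is to extract $\chi(\varphi_L)$ from the trace formulas of Prop.~\ref{prop:repr_traces} by matching them against the explicit decomposition $\rho|_{W_H} \simeq \chi \oplus \chi^{-1}(-1) \oplus \chi' \oplus \chi'^{-1}(-1)$ of Prop.~\ref{prop:40_induced}.(2).

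First I would pin down $\chi(\varphi_L)$ up to sign. Since $L/K$ is totally ramified (Prop.~\ref{prop:L_tot_ram}) and $[k_K:\F_5]$ is odd, $L(\zeta_8)/L$ is unramified of degree $2$, so $\varphi_{L(\zeta_8)} \equiv \varphi_L^2$ modulo $I_{L(\zeta_8)}$ and $q_{L(\zeta_8)} = q_K^2$. Combining with Cor.~\ref{cor:twisted_trivial} gives $\rho(\varphi_L)^2 = q_K \cdot \Id$, so reading eigenvalues off the decomposition forces $\chi(\varphi_L), \chi'(\varphi_L) \in \{\pm\sqrt{q_K}\}$. Prop.~\ref{prop:repr_traces} with $m=0$, $n=1$ yields $\Tr\rho(\varphi_L) = 0$ (third case, with $\lege{0}{\F_5} = 0$); expanding this trace via the decomposition and using $\chi^{-1}(-1)(\varphi_L) = q_K/\chi(\varphi_L) = \chi(\varphi_L)$ (and similarly for $\chi'$) forces $\chi'(\varphi_L) = -\chi(\varphi_L)$.

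Next, from $\chi(\s) = e^{2\pi i/5}$ and $\Tr\rho(\s) = -1$ (as $\rho|_{I_K}$ is the faithful irreducible $4$-dimensional representation of $C_5 \rtimes C_8$ evaluated on a $5$-cycle), the pair $\{\chi'(\s), \chi'^{-1}(\s)\}$ must equal $\{e^{4\pi i/5}, e^{-4\pi i/5}\}$. Applying Prop.~\ref{prop:repr_traces} with $m = n = 1$ (third case) then gives
\[ \Tr\rho(\s\varphi_L) = -\lege{a}{k_L}\lege{r}{\F_5}\,5^{([k_K:\F_5]+1)/2} = -\lege{a}{k_L}\lege{r}{\F_5}\sqrt{5\,q_K}. \]
On the other hand, expanding $\Tr\rho(\s\varphi_L)$ via the decomposition yields
\[ \Tr\rho(\s\varphi_L) = 2\cos(2\pi/5)\,\chi(\varphi_L) + 2\cos(4\pi/5)\,\chi'(\varphi_L), \]
and substituting $\chi'(\varphi_L) = -\chi(\varphi_L)$ together with the identity $\cos(2\pi/5) - \cos(4\pi/5) = \sqrt{5}/2$ reduces this to $\sqrt{5}\,\chi(\varphi_L)$. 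Equating both expressions, cancelling $\sqrt{5}$, and using that for $[k_L:\F_5]$ odd the Legendre symbol of $k_L$ restricts on $\F_5^\times$ to the Legendre symbol of $\F_5$, multiplicativity gives $\chi(\varphi_L) = -\lege{ar}{k_L}\sqrt{q_K}$.

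The main obstacle is breaking the symmetry between $\chi$ and $\chi'$: the eigenvalue equation alone only determines the unordered pair $\{\chi(\varphi_L), \chi'(\varphi_L)\}$, and it is the $m=1$ trace, enriched by the arithmetic datum $r$ from the reduction in Prop.~\ref{prop:iso_Ca0}, that distinguishes the two characters through the distinctive $\sqrt{5}$-factor coming from the character sum over $F_5$. Care must be taken that the choice between $\chi'(\s) = e^{4\pi i/5}$ and $e^{-4\pi i/5}$ is immaterial, because $\chi'(\varphi_L)$ only enters through the symmetric combination $2\cos(4\pi/5)\chi'(\varphi_L)$.
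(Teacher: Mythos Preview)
Your proof is correct and follows essentially the same approach as the paper: both arguments pin down $\chi(\varphi_L)$ to $\pm\sqrt{q_K}$ via Cor.~\ref{cor:twisted_trivial}, use $\Tr\rho(\varphi_L)=0$ to force $\chi'(\varphi_L)=-\chi(\varphi_L)$, and then determine the sign by matching the trace $\Tr\rho(\s\varphi_L)$ from Prop.~\ref{prop:repr_traces} against the decomposition, invoking the same identity $e^{2\pi i/5}+e^{-2\pi i/5}-e^{4\pi i/5}-e^{-4\pi i/5}=\sqrt{5}$. Your expansion via cosines and the paper's via the Gauss sum are the same computation written in two notations.
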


\begin{proof}
Recall from Prop.~\ref{prop:L_tot_ram} that $L/K$ is totally ramified, so $k_L=k_K$. Since $[k_K:\F_5]$ is odd, $q_K=q_L=\sqrt{q_{L(\zeta_8)}}$, and $\lege{\cdot}{\F_5}$ is the restriction of $\lege{\cdot}{k_L}$ to $\F_5$.
 
Let $\chi'$ be the other character appearing in Prop.~\ref{prop:40_induced}. From Prop.~\ref{prop:repr_traces} we have $\Tr\rho(\s)=-1$, which, together with \eqref{eq:chi_sigma_img}, forces \begin{equation}\label{eq:chis_sigma}
\chi'(\s)\in\left\{(e^{\frac{2\pi i}{5}})^{2}, (e^{\frac{2\pi i}{5}})^{3}\right\}.\end{equation}

Cor.~\ref{cor:twisted_trivial} implies that the eigenvalues of $\rho(\varphi_L)$ are $\pm \sqrt{q_L}$. From Prop.~\ref{prop:repr_traces} we have $\Tr\rho(\varphi_L)=0$, so there exists some $w=\pm 1$ such that \begin{equation}\label{eq:chis_frob}\chi(\varphi_L)=w\sqrt{q_L} \hspace{0.5cm}\text{and}\hspace{0.5cm} \chi'(\varphi_L)=-w\sqrt{q_L}.\end{equation}

Using \eqref{eq:chi_sigma_img}, \eqref{eq:chis_sigma}, and \eqref{eq:chis_frob} together with a classical formula for Gauss sums \autocite[\S1.1]{gauss_sums_book} gives 
\[\Tr\rho(\s\varphi_L)=w\sqrt{q_L}\left(e^{\frac{2\pi i}{5}}+(e^{\frac{2\pi i}{5}})^{4}-(e^{\frac{2\pi i}{5}})^{2}-(e^{\frac{2\pi i}{5}})^{3}\right)=w\sqrt{5q_L}.\]
It now follows from Prop.~\ref{prop:repr_traces} that $w=-\lege{ar}{k_L}$.
\end{proof}

\subsection{Choosing $\chi$}\label{subs:choice_chi} We assume that $[k_K:\F_5]$ is odd, then $[k_L:\F_5]$ is also odd. Although the root number $w(\chi,\psi_k\circ\Tr_{H/K})$ does not depend on the choice of the character $\chi$ in Prop.~\ref{prop:40_induced}.(3), in order to carry out a detailed computation we will need to fix a particular $\chi$. Depending on whether $a$ is a square in $k_L^\times$, we may choose $\s$ and, consequently, $\chi$ so that $\lege{ar}{k_L}=1$ and that we still have \eqref{eq:chi_sigma_img}. 

\begin{lem}\label{lem:chi_d_odd}
If $[k_K:\F_5]$ is odd and $\chi$ is as in \ref{subs:choice_chi}, then $\chi\circ\theta_H(d_{\a_1,\chi})\approx -1.$
\end{lem}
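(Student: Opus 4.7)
The plan is to express $d_{\a_1,\chi}$ as $-\norm_{L/H}(\gamma)$ for a suitable $\gamma\in L$ with $\gamma^2=\a_1-\a_2$, and then compute $\chi\circ\theta_H$ of this using the norm-reciprocity law together with the known values of $\chi$ at $\varphi_L$ and at $-1$.

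First, the choice of $\s$ in \ref{subs:choice_chi} ensures $\lege{ar}{k_L}=1$, so Prop.~\ref{prop:diff_square}(2) provides $\gamma\in L$ with $\gamma^2=\a_1-\a_2$. The identity $\beta=-2^8\prod_{i<j}(\a_i-\a_j)^2$ combined with Prop.~\ref{prop:diff_square}(1) gives $v_L(\a_1-\a_2)=2\nu$, hence $v_L(\gamma)=\nu$, which is odd by Prop.~\ref{prop:inertia_disc_val}. Therefore $\gamma\notin M$, and the nontrivial element $\tau_1\in\Gal(L/M)$ (which fixes $\gamma^2$) must satisfy $\tau_1(\gamma)=-\gamma$. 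Then $\norm_{L/M}(\gamma)=\gamma\tau_1(\gamma)=-(\a_1-\a_2)$, and applying $\norm_{M/H}$, using that $[M:H]=5$ is odd, yields
\[\norm_{L/H}(\gamma)=\norm_{M/H}(-(\a_1-\a_2))=-d_{\a_1,\chi}.\]

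By functoriality of the reciprocity map, $\chi\circ\theta_H(d_{\a_1,\chi})=\chi(-1)\cdot\chi|_{W_L}(\theta_L(\gamma))$. The restriction $\chi|_{W_L}$ is unramified because good reduction of $J(C)$ over $L$ forces $\rho|_{I_L}$, and hence $\chi|_{I_L}$, to be trivial; combined with Lemma~\ref{lem:chi_gauss_frob} and Prop.~\ref{prop:L_tot_ram} (giving $q_L=q_K$), one gets $\chi(\varphi_L)=-\sqrt{q_L}$. Hence
\[\chi|_{W_L}(\theta_L(\gamma))=\chi(\varphi_L)^{v_L(\gamma)}=(-\sqrt{q_L})^\nu=-q_L^{\nu/2},\]
using the oddness of $\nu$.

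Finally, I would compute $\chi(-1)$. Specializing \eqref{eq:chi_on_K} at $-1\in K^\times$ and using $||-1||_K=1$ yields $\chi\circ\theta_H(-1)=\chi_4^2\circ\theta_K(-1)^{-1}=(\beta,-1)_K^{-1}$ via the identification $\chi_4^2\circ\theta_K=(\beta,\cdot)_K$ from \ref{subs:gauge_weierstrass_eq}. The standard tame formula for the quadratic Hilbert symbol (cf.~\autocite[V.(3.4)]{neukirch}) gives $(\beta,-1)_K=\lege{-1}{k_K}^\nu=1$, since $\lege{-1}{k_K}=1$ in residue characteristic $5$. Combining everything,
\[\chi\circ\theta_H(d_{\a_1,\chi})=\chi(-1)\cdot(-q_L^{\nu/2})=-q_L^{\nu/2}\approx-1.\]
The main delicate point is the bookkeeping of signs across the two successive norm operations; the odd degree $[M:H]=5$ and the odd valuation $\nu$ both enter essentially, and in the even case the analogous argument would produce $+1$ instead.
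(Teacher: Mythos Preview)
Your proof is correct and follows essentially the same route as the paper's: both use Prop.~\ref{prop:diff_square}(2) to write $\a_1-\a_2$ (up to sign) as a norm from $L$, reduce via norm-functoriality to the unramified character $\chi|_{W_L}$, and finish with Lemma~\ref{lem:chi_gauss_frob} and the oddness of $\nu$. The only difference is cosmetic: the paper writes $\a_1-\a_2=\norm_{L/M}(b)$ directly (implicitly absorbing the sign using that $-1$ is a square in $M$, so one may take $b=i\gamma$), whereas you keep $\gamma$ and handle the extra factor $\chi\circ\theta_H(-1)=(\beta,-1)_K^{-1}=1$ separately.
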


\begin{proof} Applying Lemma~\ref{lem:chi_gauss_frob} for the chosen $\chi$ gives 
\[\chi(\varphi_L)=-\sqrt{q_K}.\] On the other hand, Prop.~\ref{prop:diff_square}.(2) tells us that $\a_1-\a_2$ is a square in $L$, so there exists some $b\in L$ such that $\a_1-\a_2=\norm_{L/M}(b)$. Prop.~\ref{prop:diff_square}.(1) and Prop.~\ref{prop:L_tot_ram} give \[v_L(b)=v_M(\a_1-\a_2)=\frac{1}{20}v_M(\Delta(P))= v_K(\Delta(P)).\] It now follows from Prop.~\ref{prop:hyperell_max_equiv} that $v_L(b)$ is odd.
The restriction $\chi|_{W_L}$ is unramified, so the above discussion shows that \[\chi\circ\theta_H(d_{\a_1,\chi})=\chi\circ\theta_L(b)=\chi(\varphi_L)^{v_L(b)}=\left(-\sqrt{q_K}\right)^{v_L(b)}\approx -1.\qedhere\]
\end{proof}

\begin{proof}[Proof of Theorem~\ref{thm:max_ramif_rootN}] When $[k_K:\F_5]$ is even we use Lemma~\ref{lem:chi_d_even}, and when $[k_K:\F_5]$ is odd we choose $\chi$ as in \ref{subs:choice_chi} and use Lemma~\ref{lem:chi_d_odd} in order to obtain $\chi\circ\theta_H(d_{\a_1,\chi})\approx(-1)^{[k_K:\F_5]}$. Plugging the latter into \eqref{eq:chi_rootN_a6_m} gives 
\begin{equation}\label{eq:chi_rootN_final} w(\chi,\psi_k\circ\Tr_{H/K})\approx(-1)^{[k_K:\F_5]}\cdot\lege{v_K(a_6)}{k_K}\cdot (\Delta,a_6)_K.\end{equation} 
Combining \eqref{eq:chi_rootN_final} and Lemma~\ref{lem:root_n_ind_id_4} into \eqref{eq:root_n_ind_formula} proves the relation $\approx$ between the two sides of the formula of Thm.~\ref{thm:max_ramif_rootN}. Since both sides take values in $\{1,-1\}$, the theorem holds (see \ref{subs:root_n_char}). 
\end{proof}

\subsection{An example} This is \autocite[\href{https://www.lmfdb.org/Genus2Curve/Q/896875/a/896875/1}{genus $2$ curve 896875.a.896875.1}]{lmfdb}. Let $C/\Q$ be the hyperelliptic curve defined by \[Y^2=P(X):=X^5 + \frac{5}{4}X^4 - \frac{5}{2}X^3 - \frac{5}{4}X^2 + \frac{5}{2}X + \frac{1}{4}.\] Its discriminant is $\Delta=-5^5 \cdot 7 \cdot 41$, and a smooth model over $\Z_2$ can be given. 

Over $7$ the reduction is semi-stable, the singular point of the special fiber is given by $X=5$, $Y=0$, and we have $P(X)\equiv(X-5)^2H_7(X)\bmod 7$ with $H_7(X)=X^3 +6X^2 + X + 4$ separable over $\F_7$. Over $41$ the reduction is again semi-simple, the singular point is at $X=12$, $Y=0$, and we have $P(X)\equiv (X-12)^2H_{41}(X)\bmod 41$ with $H_{41}(X)=X^3+15X^2+29X+21$ separable over $\F_{41}$. We apply \autocite[Lemma~6.7]{brumer_kramer_sabitova} to compute $w(C/\Q_7)=-\lege{H_7(5)}{\F_7}=-\lege{4}{\F_7}=-1$ and $w(C/\Q_{41})=-\lege{H_{41}(12)}{\F_{41}}=-\lege{34}{\F_{41}}=1.$

We observe that $P(X+1)$ is Eisenstein over $\Z_5$, so the $\Ga_{\Q_5}$-action on $J(C)[2]$ is wildly ramified. Thus, $\rho_\ell$ is wildly ramified for every $\ell\neq5$, and $C/\Q_5$ has potentially good reduction by Prop.~\ref{prop:wild5_good_equiv}. The equation $Y^2=P(X+1)$ satisfies the conditions of Prop.~\ref{prop:hyperell_eq_spec} with $a_6=\frac{5}{4}$, so Thm.~\ref{thm:max_ramif_rootN} applies to give \[w(C/\Q_5)=\left(-5^5 \cdot 7 \cdot 41,\frac{5}{4}\right)_{\Q_5}=-1.\] 

The global root number is then $w(C/\Q)=1$, which is compatible with the Hasse--Weil and the BSD conjectures since both analytic and Mordeil--Weil ranks of $J(C)/K$ are $2$ (see \autocite{lmfdb}).

\printbibliography

\end{document}